\newtheorem{theorem}{Theorem}[section]
\newtheorem{prop}[theorem]{Proposition}
\newtheorem{lem}[theorem]{Lemma}
\newtheorem{claim}[theorem]{Claim}
\newtheorem{eg}[theorem]{Example}
\newtheorem{cor}[theorem]{Corollary}
\newtheorem{defi}[theorem]{Definition}
\newtheorem{rmk}[theorem]{Remark}
\newtheorem{fact}[theorem]{Fact}
\newtheorem*{varthm2}{Theorem \ref{local-bound}}
\newtheorem*{ack}{Acknowledgements}
\newtheorem{conj}[theorem]{Conjecture}
\newcommand{\R}{\mathbb{R}}  
\newcommand{\Q}{\mathbb{Q}}  
\newcommand{\C}{\mathbb{C}}  
\newcommand{\CP}{\mathbb{CP}^{m+1}}  
\newcommand{\Z}{\mathbb{Z}}
\newcommand{\K}{\mathbb{K}_n} 
\newcommand{\Kt}{\mathbb{K}_n[t^{\pm 1}]} 
\newcommand{\kt}{k_n[t^{\pm 1}]} 
\newcommand{\ktp}{k_n[t^{\pm 1}]/{(p(t))}} 
\newcommand{\U}{\mathcal{U}}
\begin{document}

\title{ Higher-order Alexander invariants of hypersurface complements}

\begin{center}
\author{Yun Su}
\end{center}

\begin{abstract}
We define higher-order Alexander modules $\mathcal{A}_{n,i}(\U)$ and higher-order degrees $\delta_{n,i}(\U)$ which are invariants of a complex hypersurface complement $\U$. These invariants come from the module structure of the homology of certain solvable covers of the hypersurface complement. Such invariants were originally developed by T. Cochran in \cite{Co} and S. Harvey in \cite{Ha}, and were used to study knots and 3-manifolds. In this paper, I generalize the result proved by C. Leidy and L. Maxim \cite{HP} from the plane curve complements to higher-dimensional hypersurface complements.  
\end{abstract}

\maketitle

\section{Introduction} 

Zariski observed that the position of singularities on a singular complex plane curve affects the topology of the curve, and the fundamental group of the complement of this curve can detect this fact. However, the fundamental group is in general hard to handle. Therefore, it is natural to seek other tools which can capture the information about the topology, such as the Alexander-type invariants. The notion of Alexander invariants comes from knot theory (See \cite{RB}). Libgober developed such invariants of total linking number infinite cyclic cover in \cite{AP}, \cite{AP2}, \cite{Fun}, invariants of the universal abelian cover are considered by Dimca, Libgober and Maxim in \cite{Mu}, \cite{On}, \cite{Cha}, while invariants of certain solvable covers are studied by Leidy and Maxim in \cite{HP}.

The first Alexander module is defined using the first homology group of the infinite cyclic cover of the curve complement corresponding to the total linking number homomorphism. One of the most important results of Libgober is that the (global) Alexander polynomial of a singular irreducible curve divides the product of the `local' polynomials associated with each singular point (See \cite{AP}, \cite{AP2}). This suggests that the topology of a singular curve is controlled by its singularities.

 In the case when hypersurfaces carry non-isolated singularities and are in general position at infinity, Maxim showed that the infinite cyclic Alexander modules of such hypersurfaces can be realized as intersection homology modules of the ambient space, with a certain local coefficient system (See \cite{MS}). He extended Libgober's divisibility results, and proved not only that the global Alexander polynomials are entirely determined by the local information of the link pairs of singular strata, but also that the zeros of these polynomials are among the roots of unity of order $d$, where $d$ is the degree of the hypersurface.

Similarly, multivariable Alexander invariants of hypersurface complements can be associated to the universal abelian cover of such complements. They enjoy similar finiteness properties by works of Dimca, Ligober, Maxim, Suciu, etc. (See \cite{Mu},  \cite{On}, \cite{Cha}, \cite{Su})

Inspired by the success of using the infinite cyclic cover and universal abelian cover, Leidy and Maxim started to look at the higher-order covers of plane curve complements (in \cite{HP}). The notion of higher-order Alexander invariants was developed originally in knot theory by T. Cochran (in \cite{Co}) and S. Harvey (in \cite{Ha}). These are Alexander-type invariants of coverings corresponding to terms of the derived series of a knot group.  Even though there are difficulties in working with modules over non-commutative rings, Cochran showed that these invariants are very useful for estimating
knot genus, detecting fibered, prime and alternating knots, as well as knot concordance (See \cite{Co}, \cite{C}). 
These invariants have applications if one considers the fundamental group of a link complement or that of a compact, orientable 3-manifold.
For example, Harvey showed that such invariants give lower bounds for the Thurston norm and provide new algebraic obstructions to a 4-manifold of the form $M^3 \times S^1$ admitting a symplectic structure (See \cite{Ha},　 \cite{H}).

Motivated by their application in knot theory and low-dimensional topology, Leidy and Maxim adapted the notion of higher-order Alexander invariants in the study of plane curve complements. To any affine plane curve
$C$, they associate a sequence $\{\delta_n(C)\}_{n\geq 0}$ of integers, called the higher-order degrees of $C$. Roughly speaking, these
integers measure the sizes of quotients of successive terms in the rational derived series of $G = \pi_1(\C^2 \setminus C)$
and are invariants of the fundamental group of the curve complement. Leidy and Maxim's main result asserts that for curves
in general position at infinity, these invariants are finite. This is done by providing an upper bound on the
higher-order degrees of the curve in terms of the corresponding local invariants at the singular points, and a
uniform upper bound depending only on the degree of the curve. This result yields new obstructions on the
type of groups that can arise as fundamental groups of plane curve complements (See \cite{Os}).

  In this paper, I extend Leidy and Maxim's result to higher-dimensional hypersurfaces. Given a reduced hypersurface $V$ in $\mathbb{CP}^{m+1}$ and a fixed generic hyperplane $H$ in $\mathbb{CP}^{m+1}$, we define 
  the affine hypersurface complement $\U:=\mathbb{CP}^{m+1} \setminus (V \cup H)$. Starting with the rational derived series $G_r^{(n)}$ of $G = \pi_1(\U)$, we get a sequence of covers $\U_{\Gamma_n}$ corresponding to the quotients  
 $\Gamma_n:=G/G_r^{(n+1)}$, the so-called higher-order covers. 
For instance, the universal abelian cover is one of the higher-order covers. Furthermore, if the hypersurface is irreducible, then the universal abelian cover is the infinite cyclic cover, and so the usual Alexander modules coincide with the 0-th order Alexander modules $H_i(\U_0;\Z)$. 
We define the higher-order Alexander modules of the hypersurface complement to be $\mathcal{A}_{n,i}^{\Z}(\U)=H_i(\U;\Z\Gamma_n)$, and note that $\mathcal{A}_{0,i}^{\Z}(\U)$ is just the universal abelian Alexander modules of $\U$. In section 2 to 4, we talk about basic definitions and properties about the higher-order Alexander invariants.

In section 5, considering the link at infinity, Maxim showed (in \cite{L2}) that for hypersurface transversal to the hyperplane at infinity,  
the higher-order degrees of the hypersurface complement are bounded above by the higher-order degrees of the link at infinity. Moreover, these invariants are finite.

  In section 6 to 8, we find `local' bounds for the higher-order degrees of the hypersurface complement. In particular, using a Whitney stratification of $V$, we associate to any stratum in $V$ a link pair, which is a topological invariant of the stratum. The main result is as follows:
 
\begin{varthm2}
Let $V$ be a reduced hypersurface in $\CP$ with generic hyperplane $H$ at infinity and let $\U=\CP\setminus V \cup H$. Then for $i\leq m$:

$$\delta_{n,i}(\U) \leq
\sum \theta(k_p, a)\cdot \delta _{n,b}(S^{2m-2{k_p}+1}\setminus K^{2m-2{k_p}-1})$$  
where $(S^{2m-2{k_p}+1}, K^{2m-2{k_p}-1})$ are the local link pairs and $\theta(k_p, a)$ are some coefficients depending on the local topology of the hypersurface.
 
 \end{varthm2}
 
   Since the higher-order degrees of each local link pair are finite, we also get a proof of finiteness on the higher-order degrees of the hypersurface complement. 
  
  In section 9, we introduce how to use fox calculus to calculate first higher-order degrees, and we compute some explicit examples. In section 10, we consider higher-order degrees of a group and try to find some useful properties.

\begin{ack}
The author would like to thank Laurentiu Maxim, Constance Leidy, Tim Cochran, Shelly Harvey, Anatoly
Libgober, Alexandru Dimca for many helpful inspiration and conversations about
this project.
\end{ack}

\section{Rational Derived Series}
In this section, we review the definitions and basic concepts that we will need from \cite{Ha} and \cite{Co}. More details can be found in  these sources.

\begin{defi}
Let $G_r^{(0)}=G$ be a group. For $n\geq 1$, we define the $n^{th}$ term of the rational derived series of $G$ inductively by:
$$G_r^{(n)}= \{  g\in  G_r^{(n-1)} \ | \ g^k \in [G_r^{(n-1)}, G_r^{(n -1)}], \text{ for some } k \in \Z \setminus \{0\} \}.$$
\end{defi}

Note that $G=G_r^{(0)}\supseteq G_r^{(1)} \supseteq \cdots $, denote $\Gamma _n := G/{G_r^{(n+1)}}$ and let $\phi_n:G\twoheadrightarrow G/{G_r^{(n+1)}}$ be the quotient map. Since $G_r^{(n)}$ is a normal subgroup of $G_r^{(i)}$ for $0\leq i \leq n$ (\cite{Ha}, Lemma 3.2), it follows that $\Gamma _n$ is a group.
We use rational derived series as opposed to the usual derived series in order to avoid zero-divisors in the group ring $\Z \Gamma_n$.

It is shown in \cite{Ha}, Lemma 3.5, that the successive quotients of the rational derived series are torsion-free abelian groups. That is for all $n\geq0$,
$$G_r^{(n)}/G_r^{(n+1)}\cong \left(G_r^{(n)}/[G_r^{(n)}, G_r^{(n)}]\right)/\{\Z -\text{torsion}\}.$$

Suppose $X$ is a topological space and $G=\pi_1(X)$, this shows $$G_r^{(n)}/G_r^{(n+1)}\cong H_1(X_{\Gamma_{n-1}};\Z)/\{\Z -\text{torsion}\},$$ where $X_{\Gamma_{n-1}}$ is the regular $\Gamma_{n-1}$ cover of $X$. In particular, $$G/G_{r}^{(1)}=H_1(X)/\{\Z -\text{torsion}\}=\Z ^{\beta _1(X)},$$ where $\beta_1(X)$ is the first betti number of $X$.

\begin{cor}\label{co} (\cite{Ha}, section 3)

1) If $N\lhd G_r^{(n)}$ and $G_r^{(n)}/N$ is torsion-free abelian, then $G_r^{(n+1)}\subseteq N$.

2) If $G^{(n)}/G^{(n+1)}$ is  $\Z$-torsion free for all $n$, then $G_r^{(n)}=G^{(n)}$ for all $n$. 

(Here $G^{(n)}=[G^{(n-1)},G^{(n-1)}]$ is the usual derived series.)
\end{cor}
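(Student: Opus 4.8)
The plan is to derive both parts directly from the defining formula for the rational derived series, with part 2) proved by induction on $n$ using part 1) as the engine.

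For part 1), I would take an arbitrary element $g\in G_r^{(n+1)}$. By definition of the rational derived series, $g\in G_r^{(n)}$ and $g^k\in[G_r^{(n)},G_r^{(n)}]$ for some $k\in\Z\setminus\{0\}$. Since $G_r^{(n)}/N$ is abelian, the commutator subgroup $[G_r^{(n)},G_r^{(n)}]$ is contained in $N$, so the image $\bar g$ of $g$ in $G_r^{(n)}/N$ satisfies $\bar g^{\,k}=1$. As $G_r^{(n)}/N$ is torsion-free, this forces $\bar g=1$, i.e. $g\in N$. Hence $G_r^{(n+1)}\subseteq N$.

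For part 2), I would induct on $n$, the base case $n=0$ being the tautology $G_r^{(0)}=G=G^{(0)}$. Assume $G_r^{(n)}=G^{(n)}$. One inclusion is immediate from the definition: every element of $[G^{(n)},G^{(n)}]=[G_r^{(n)},G_r^{(n)}]$ lies in $G_r^{(n+1)}$ (take $k=1$), so $G^{(n+1)}\subseteq G_r^{(n+1)}$. For the reverse inclusion I would apply part 1) with $N=G^{(n+1)}$: this subgroup is characteristic, hence normal, in $G^{(n)}=G_r^{(n)}$, and by hypothesis the quotient $G_r^{(n)}/N=G^{(n)}/G^{(n+1)}$ is abelian and $\Z$-torsion free, so part 1) yields $G_r^{(n+1)}\subseteq G^{(n+1)}$. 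Combining the two inclusions gives $G_r^{(n+1)}=G^{(n+1)}$, completing the induction.

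There is no substantial obstacle here; the only points needing a moment of care are verifying that the relevant subgroups ($[G_r^{(n)},G_r^{(n)}]$ and $G^{(n+1)}$) are normal in $G_r^{(n)}$ so the quotients make sense — which holds because derived subgroups are characteristic — and reading the definition correctly so that membership in a commutator subgroup already places an element in the next rational-derived term, which is exactly what supplies the easy inclusion in part 2).
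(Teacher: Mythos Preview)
Your proof is correct. The paper itself does not supply a proof of this corollary --- it is stated with a citation to \cite{Ha}, section 3 --- so there is nothing to compare against; your direct argument from the definition of $G_r^{(n+1)}$ for part 1), and the induction using part 1) with $N=G^{(n+1)}$ for part 2), is exactly the standard route.
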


\begin{eg}\rm
1) If $G$ is a finite group, then $G=G_r^{(n)}$ and $\Gamma _n=1$ for all $n$.

2) If $G$ is a knot group or a free group, then the quotients of successive terms of the derived series are torsion free abelian. Hence $G_r^{(n)}=G^{(n)}$ for all $n$. The rational derived series and the derived series coincide (See \cite{Ha}, p. 902).

3) If $G$ is the fundamental group of a link complement in $S^3$ or that of a plane curve complement, then $G_r^{(1)}=G^{(1)}$.
\end{eg}

\begin{defi} 
A group $\Gamma$ is poly-torsion-free-abelian (PTFA) if it admits a normal series $1=G_0 \lhd G_1 \lhd ...\lhd G_n=\Gamma$ such that each of the factors $G_{i+1}/G_i$ is torsion free abelian.

\end{defi}

\begin{prop} We collect the following facts from \cite{Ha}:

1) If $A\subseteq G$ and $G$ is PTFA, then $A$ is PTFA.

2) If $A\lhd G$ is PTFA and $G/A$ is PTFA, then $G$ is PTFA.

3) Any PTFA group is torsion free and solvable.

4) 
$\Gamma_n$ is a PTFA group.
\end{prop}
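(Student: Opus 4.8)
The four assertions are formal group theory built on the definition of PTFA together with the two results of Harvey already quoted: Lemma 3.2, that $G_r^{(k)}$ is normal in $G_r^{(i)}$ whenever $i\le k$, and Lemma 3.5, that each successive quotient $G_r^{(k)}/G_r^{(k+1)}$ of the rational derived series is torsion-free abelian. My plan is to establish (1)--(3) by direct manipulation of normal series and then read off (4). For (1), I would fix a normal series $1=G_0\lhd G_1\lhd\cdots\lhd G_k=G$ with each $G_{i+1}/G_i$ torsion-free abelian and set $A_i:=A\cap G_i$; then $A_i\lhd A_{i+1}$ because $G_i\lhd G_{i+1}$, and the second isomorphism theorem identifies $A_{i+1}/A_i=A_{i+1}/(A_{i+1}\cap G_i)$ with a subgroup of $G_{i+1}/G_i$, hence with a torsion-free abelian group, so $1=A_0\lhd\cdots\lhd A_k=A$ witnesses that $A$ is PTFA.

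For (2), I would take normal series $1=A_0\lhd\cdots\lhd A_j=A$ for $A$ and $1=\bar B_0\lhd\cdots\lhd\bar B_\ell=G/A$ for $G/A$, both with torsion-free abelian factors, let $B_i\le G$ be the preimage of $\bar B_i$ under $G\twoheadrightarrow G/A$ (so $B_0=A$, $B_\ell=G$), and concatenate:
$$1=A_0\lhd\cdots\lhd A_j=A=B_0\lhd B_1\lhd\cdots\lhd B_\ell=G.$$
The correspondence theorem gives $B_i\lhd B_{i+1}$ with $B_{i+1}/B_i\cong\bar B_{i+1}/\bar B_i$, and $A\lhd G$ gives $A=A_j\lhd B_1$, so this is a normal series with torsion-free abelian factors and $G$ is PTFA. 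For (3), I would induct on the length $k$ of a normal series $1=G_0\lhd\cdots\lhd G_k=\Gamma$ (base case trivial): $G_{k-1}$ is PTFA of shorter length, hence torsion-free and solvable by induction; then $\Gamma$ is solvable as an extension of the solvable group $G_{k-1}$ by the abelian group $\Gamma/G_{k-1}$, and $\Gamma$ is torsion-free because an element of finite order has finite-order, hence trivial, image in the torsion-free abelian group $\Gamma/G_{k-1}$, so it lies in the torsion-free group $G_{k-1}$ and is itself trivial.

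Finally, for (4) I would apply Harvey's Lemma 3.2 (with the index $n+1$) to conclude $G_r^{(n+1)}\lhd G_r^{(i)}$ for $0\le i\le n+1$, so that the images $\bar G^{(i)}:=G_r^{(i)}/G_r^{(n+1)}$ in $\Gamma_n$ form a chain $1=\bar G^{(n+1)}\lhd\cdots\lhd\bar G^{(0)}=\Gamma_n$, each term normal in the next with factor $\bar G^{(i)}/\bar G^{(i+1)}\cong G_r^{(i)}/G_r^{(i+1)}$ torsion-free abelian by Harvey's Lemma 3.5; hence $\Gamma_n$ is PTFA. Alternatively, since $G_r^{(n)}/G_r^{(n+1)}$ is a torsion-free abelian normal subgroup of $\Gamma_n$ with quotient $\Gamma_{n-1}$, statement (4) also follows from (2) by induction on $n$, starting from $\Gamma_0\cong\Z^{\beta_1(\U)}$. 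There is no real obstacle here: the only thing demanding care is tracking normality of the subgroups under the various intersection, preimage, and quotient operations, and the one genuinely substantive ingredient --- torsion-freeness of the successive quotients of $G_r^{(\bullet)}$, which is precisely the reason the rational derived series is used in place of the ordinary one --- is imported directly from \cite{Ha}.
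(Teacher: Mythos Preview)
Your proof is correct. The paper itself gives no proof of this proposition --- it simply quotes the four facts with attribution to \cite{Ha} --- so there is nothing to compare against beyond noting that your arguments (intersecting a normal series with a subgroup for (1), concatenating series via preimages for (2), induction on length for (3), and reading off the series $G_r^{(i)}/G_r^{(n+1)}$ for (4) using Harvey's Lemmas 3.2 and 3.5) are exactly the standard ones one would expect to find in the cited reference.
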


\section{Skew Laurent Polynomial Ring}
The following definitions and properties are shown in \cite{Ha}.

\begin{defi}
Let $R$ be a ring and $S$ be a subset of $R$. $S$ is a right divisor set of $R$ if the following properties hold:

1) $0\notin S$, $1\in S$,

2) $S$ is multiplicatively closed,

3) Given $r\in R$, $s\in S$, there are $r_1\in R$, $s_1\in S$ such that $rs_1=sr_1$.

\end{defi}

\begin{defi}
The right quotient ring $RS^{-1}$ is a ring containing $R$ such that:

1) every element of $S$ has an inverse in $RS^{-1}$,

2) every element of $RS^{-1}$ is of the form $rs^{-1}$ with $r\in R$, $s\in S$.

\end{defi}

It is known that if $S\subseteq R$ is a right divisor set, then the right quotient ring $RS^{-1}$ exists.

\begin{defi}
If $R$ has no zero-divisors and $S=R\setminus \{0\}$ is a right divisor set, then $R$ is called an Ore domain.
If $R$ is an Ore domain, then $RS^{-1}$ is a skew field, called the classical right ring of quotients of $R$.

\end{defi}

\begin{prop} (\cite{Ha}, prop. 4.1, remark 4.2, remark 4.3)

1) If $\Gamma$ is a PTFA, then $\Z\Gamma$ is a right Ore domain, i.e. $\Z\Gamma$ embeds in its classical right ring of quotients $\mathcal{K}=\Z\Gamma(\Z\Gamma \setminus \{0\})^{-1}$ which is a skew field.

2) If $R$ is an Ore domain and $S$ is a right divisor set, then $RS^{-1}$ is flat as a left $R$-module. In particular, $\mathcal{K}$ is flat left $\Z\Gamma$-module.

3) Every module over $\mathcal{K}$ is a free module, and such module has a well-defined rank $rk_{\mathcal{K}}$, which is additive on short exact sequences.

\end{prop}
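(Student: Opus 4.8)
The statement collects three facts from Harvey's paper, so the plan is to assemble each part from standard ring-theoretic ingredients together with the one genuinely topological/group-theoretic input: that a PTFA group $\Gamma$ has no zero-divisors in its integral group ring. First I would record this zero-divisor fact. Since $\Gamma$ is PTFA, it has a finite normal series with torsion-free abelian quotients; in particular $\Gamma$ is torsion-free, and more importantly it is poly-(torsion-free-abelian), hence (by a theorem going back to work on group rings of such groups, e.g. via Mal'cev--Neumann or via the Higman/Passman results on orderable groups) $\Gamma$ is a right-orderable group whose group ring $\Z\Gamma$ has no zero-divisors. Actually the cleanest route, and the one I expect Harvey uses, is induction on the length of the normal series: $\Z\Gamma$ is a crossed product of $\Z A$ (for $A$ the bottom torsion-free abelian normal subgroup, so $\Z A$ is a commutative domain, indeed a Laurent polynomial ring over $\Z$ in possibly infinitely many variables) with the PTFA quotient $\Gamma/A$, and one shows a crossed product of an Ore domain with a (smaller-length) PTFA group is again an Ore domain without zero-divisors.

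For part 1 I would then argue as follows. Having no zero-divisors, $\Z\Gamma$ is a domain; it remains to check the right Ore condition, i.e. that $S = \Z\Gamma \setminus \{0\}$ is a right divisor set in the sense of the definition in Section 3. The nontrivial axiom is: given $r \in \Z\Gamma$ and $s \in S$ there exist $r_1 \in \Z\Gamma$, $s_1 \in S$ with $r s_1 = s r_1$. I would prove this by the same induction on the length of the PTFA normal series. The base case is $\Gamma$ torsion-free abelian, where $\Z\Gamma$ is a commutative domain and the Ore condition is automatic. For the inductive step, write $\Gamma = \Z\Gamma/A \ltimes$-type extension with $A \lhd \Gamma$ torsion-free abelian and $\Gamma/A$ PTFA of smaller length; then $\Z\Gamma$ is a twisted group ring over the Ore domain $\mathcal{K}_A := \Z A (\Z A \setminus\{0\})^{-1}$ with group $\Gamma/A$, and a standard lemma (twisted/crossed product of a division ring, or more generally of an Ore domain, with a PTFA group is Ore) finishes the step. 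Once $\Z\Gamma$ is a right Ore domain, it embeds in its classical right ring of quotients $\mathcal{K} = \Z\Gamma (\Z\Gamma\setminus\{0\})^{-1}$, which is a skew field by the definition of Ore domain recalled in Section 3. I would note that one should also check $\mathcal{K}$ depends only on $\Gamma$ (independence of the chosen series), which is automatic since the localization at $S = \Z\Gamma\setminus\{0\}$ is intrinsic.

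For part 2, the flatness statement is a general fact about Ore localization: if $R$ is a (right) Ore domain and $S$ a right divisor set, then $RS^{-1}$ is flat as a left $R$-module. I would prove this by the usual argument that Ore localization is exact — tensoring a short exact sequence of left $R$-modules with $RS^{-1}$ on the right and using that every element of $RS^{-1}$ has the form $rs^{-1}$, so that the kernel of any map after localization is killed by clearing a common denominator (this is where the \emph{common} right denominator, produced by iterating the Ore condition finitely many times, is needed). Specializing to $R = \Z\Gamma$, $S = \Z\Gamma\setminus\{0\}$ gives that $\mathcal{K}$ is flat as a left $\Z\Gamma$-module.

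For part 3, since $\mathcal{K}$ is a skew field (division ring), every left or right $\mathcal{K}$-module is free: one picks a maximal $\mathcal{K}$-linearly independent subset by Zorn's lemma and checks it spans, exactly as over a commutative field, because the only obstruction — solving for one generator in terms of others — only uses that nonzero scalars are invertible. The cardinality of a basis is well-defined (the invariant basis number property holds for division rings, again by the commutative-field argument carried out on the left), giving a well-defined rank $\mathrm{rk}_{\mathcal{K}}$; additivity on short exact sequences of finitely generated $\mathcal{K}$-modules follows because such sequences of free modules split and dimension adds over the splitting. \textbf{Main obstacle.} The one genuinely non-formal point — and the only place I would have to be careful — is the inductive proof that a crossed/twisted group ring of an Ore domain by a PTFA group is again a right Ore domain with no zero-divisors; everything else (Ore localization being flat and exact, modules over a skew field being free with well-defined additive rank) is standard noncommutative algebra that I would quote or sketch briefly. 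Since the excerpt permits citing Harvey's Proposition 4.1 and Remarks 4.2–4.3 directly, in the write-up I would in fact defer the zero-divisor/Ore step to \cite{Ha} and present only the flatness and free-module/rank arguments in detail, flagging that the heart of part 1 is the classical fact about group rings of PTFA (equivalently, residually torsion-free-solvable orderable) groups.
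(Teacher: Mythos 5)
The paper gives no argument for this proposition at all: it is quoted from Harvey (prop.\ 4.1, remarks 4.2 and 4.3) and used as a black box. So your decision to defer the zero-divisor/Ore step to \cite{Ha} is exactly what the paper does, and your treatments of parts 2 and 3 (exactness of Ore localization by clearing a common right denominator; bases over a division ring via Zorn's lemma, invariance of basis number, and splitting of short exact sequences of free modules) are standard and correct.

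One caution about the sketch you give for part 1, should you ever write it out in full. As stated, your induction is circular and rests on an unavailable hypothesis. The ``standard lemma'' you invoke in the inductive step --- that a crossed (or twisted) product of an Ore domain with a PTFA group is again an Ore domain --- is precisely the statement being proved, so it cannot be quoted there; and the bottom term $G_1$ of the series $1=G_0\lhd G_1\lhd\cdots\lhd G_n=\Gamma$ is in general only subnormal, so there need not be a torsion-free abelian subgroup $A$ normal in all of $\Gamma$ over which to form the decomposition $\Z\Gamma\cong(\Z A)\ast(\Gamma/A)$ or to invert $\Z A\setminus\{0\}$ inside $\Z\Gamma$. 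The induction in the sources runs the other way: $G_{n-1}\lhd\Gamma$ with $\Gamma/G_{n-1}$ torsion-free abelian, so $\Z\Gamma\cong(\Z G_{n-1})\ast(\Gamma/G_{n-1})$ with $\Z G_{n-1}$ an Ore domain without zero divisors by the inductive hypothesis, and the lemma actually needed --- a crossed product of an Ore domain with a torsion-free \emph{abelian} group (reduced, via a direct limit, to finitely many skew Laurent extensions) is again an Ore domain without zero divisors --- is genuinely elementary, unlike the one you cite; alternatively, zero-divisor-freeness follows from local indicability (Higman) and the Ore condition from amenability of solvable groups. With that re-orientation your plan is sound, and in the form you actually propose to write it (citing \cite{Ha} for part 1) it matches the paper's treatment.
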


\begin{defi}
If $M$ is a right R-module with R an Ore domain, then the rank of M is defined as 
rank(M)$=rk_{\mathcal{K}}(M\otimes_R\mathcal{K})$, where $\mathcal{K}=R(R\setminus \{0\})^{-1}$.

\end{defi}

In particular, $A$ is a torsion $R$-module if and only if $A\otimes_R\mathcal{K}=0$.

\begin{rmk}\rm (\cite{Ha}, remark 4.4)

If $\mathcal{C}=\{(C_i,\partial _i)\}_{i\geq 0}$ is a finite chain complex of finitely generated free right $\Z\Gamma$
modules, then the Euler characteristic $\chi(\mathcal{C})=\sum\limits_{i=0}^{\infty}
(-1)^i \text{rank}(C_i)$ is defined and is equal to $\sum\limits_{i=0}^{\infty}(-1)^i  \text{rank}(H_i(\mathcal{C}))$.

\end{rmk}

The rest of this section is devoted to the ring $\Kt$, which are obtained from $\Z \Gamma_n$ by inverting the non-zero elements of a particular subring described below. This construction is used in \cite{Ha} and \cite{Co}.

Consider the group $\Gamma_n=G/G_r^{(n+1)}$ for $n\geq 0$. Since $\Gamma_n$ is PTFA, $\Z\Gamma_n$ embeds in its right ring of quotients $\mathcal{K}_n=\Z\Gamma_n(\Z\Gamma_n \setminus \{0\})^{-1}$.
Let $\psi\in H^1(G;\Z)$ be primitive, i.e. $\psi:G\twoheadrightarrow\Z$ is an epimorphism. Notice that $G/{\text{ker} \psi}\cong\Z$ is torsion free abelian; by corollary 2.2, $G_r^{(n+1)}\subseteq ...\subseteq G_r^{(1)} \subseteq \text{ker} \psi$. So $\psi$ can be factored as:
$$\xymatrix{
              G \ar[r]_{\psi}  \ar[d]_{p}          &  \Z \\
 G/G_r^{(n+1)}=\Gamma_n   \ar[ru]_{\bar{\psi}}      &  \\                              
}$$

Let $\bar{\Gamma}_n$ be the kernel of $\bar{\psi}$. Since $\bar{\Gamma}_n$ is a subgroup of $\Gamma_n$, $\bar{\Gamma}_n$ is PTFA by proposition 2.5. Thus $\Z\bar{\Gamma}_n$ is an Ore domain and $\Z\bar{\Gamma}_n \setminus \{0\}$ is a right divisor set of $\Z\bar{\Gamma}_n$. Let\linebreak
 $\mathbb{K}_n=\Z\bar{\Gamma}_n(\Z\bar{\Gamma}_n \setminus \{0\})^{-1}$ be the right ring of quotients of $\Z\bar{\Gamma}_n$, and set $R_n=\Z \Gamma_n(\Z\bar{\Gamma}_n \setminus \{0\})^{-1}$ which is a non-commutative principal left and right ideal domain (i.e. it has no zero divisor and every left and right ideal is principal).

Since $\Gamma_n/\bar{\Gamma}_n \cong \Z$, we can choose a splitting $\xi:\Z\rightarrow\Gamma_n$ defined by $1\rightarrow t$. 
As in proposition 4.5 of \cite{Ha}, the embedding $i:\Z\bar{\Gamma}_n\hookrightarrow \mathbb{K}_n$ extends to an isomorphism $R_n \cong \mathbb{K}_n[t^{\pm 1}]$. However, this isomorphism depends on the choice of the splitting.

Now we have $$\Z\Gamma_n\hookrightarrow R_n =\Z\Gamma_n(\Z\bar{\Gamma}_n \setminus \{0\})^{-1}\cong \mathbb{K}_n[t^{\pm 1}]\hookrightarrow \mathcal{K}_n=\Z\Gamma_n(\Z\Gamma_n \setminus \{0\})^{-1}.$$ Notice that $\Z\Gamma_n$ and $\mathcal{K}_n$ only depend on G, but $\mathbb{K}_n[t^{\pm 1}]$ depends on $\psi :G\twoheadrightarrow \Z$ and the splitting $\xi :\Z\rightarrow \Gamma_n$. Moreover, $\mathbb{K}_n[t^{\pm 1}]$ and $\mathcal{K}_n$ are flat $ \Z\Gamma_n $-modules by prop 3.4.

\section{Definition of Higher-Order Alexander Invariants}
Let $X$ be a connected CW complex and $\phi :\pi_1(X)\rightarrow \Gamma $ be a homomorphism. Let $X_{\Gamma}$ be the corresponding $\Gamma$-cover. Then the singular chains on $X_{\Gamma}$, $C_*(X_{\Gamma})$, gets an induced right $\Z \Gamma$-module structure, via the deck group action. A similar structure can be obtained for a pair $(X, A)$.

\begin{defi}
Let M be a $\Z\Gamma$-bimodule, define

$H_*(X;M)=H_*(C_*(X_{\Gamma};\Z)\otimes_{\Z\Gamma}M)$ and 

$H_*(X,A;M)=H_*(C_*(X_{\Gamma},A_{\Gamma};\Z)\otimes_{\Z\Gamma}M)$
as right $\Z\Gamma$-modules,

$H^*(X;M)=H_*(\text{Hom}_{\Z\Gamma}(C_*(X_{\Gamma};\Z),M))$ and

$H^*(X,A;M)=H_*(\text{Hom}_{\Z\Gamma}(C_*(X_{\Gamma},A_{\Gamma};\Z),M))$
as left $\Z\Gamma$-modules.
\end{defi}

\begin{rmk}\rm (\cite{Co}, remark 3.6)

1) By definition, $H_*(X;\Z\Gamma) \cong H_*(X_{\Gamma};\Z)$ as a right $\Z\Gamma$-module. Moreover if $M$ is flat as a $\Z\Gamma$-module, then $H_*(X;M)\cong H_*(X_{\Gamma};\Z)\otimes_{\Z\Gamma}M$. If $\Gamma$ is a PTFA, then $H_*(X_{\Gamma};\Z)\cong H_*(X;\Z\Gamma)$ is a torsion module if and only if $H_*(X;\mathcal{K})\cong H_*(X_{\Gamma};\Z)\otimes_{\Z\Gamma}\mathcal{K}=0$.

2) If $X$ is a compact, oriented n-manifold, then by Poincar\'{e} duality $H_p(X;M)$ is isomorphic to $H^{n-p}(X,\partial X;M)$ which can be regarded as a $\Z\Gamma$-module using the obvious involution on this group ring.

3) The universal coefficient spectral sequence in this setting collapses to the universal coefficient theorem for coefficient in a (noncommutative) principal ideal domain (in particular for the skew field $\mathcal{K}$). 
Hence, $H^n(X;\mathcal{K})\cong \text{Hom}_{\mathcal{K}}(H_n(X;\mathcal{K}),\mathcal{K})$.

\end{rmk}

\begin{prop}\label{a2} (\cite{Co}, proposition 3.7)

Let $X$ be a connected CW complex and $\Gamma$ a PTFA group, if $\phi :\pi_1(X)\rightarrow \Gamma $ is nontrivial, then $H_0(X;\mathcal{K})=0$ and $H_0(X;\Z\Gamma)$ is a torsion module.

\end{prop}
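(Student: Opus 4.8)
\emph{Proof proposal.} The plan is to identify $H_0(X;\Z\Gamma)$ explicitly as a cyclic right $\Z\Gamma$-module, show that it dies after inverting a single nonzero element of $\Z\Gamma$, deduce $H_0(X;\mathcal{K})=0$ by flatness, and then read off that $H_0(X;\Z\Gamma)$ is torsion from the criterion recorded just after Definition~3.5. First I would pin down $H_0$. By Remark~4.2(1) we have $H_0(X;\Z\Gamma)\cong H_0(X_\Gamma;\Z)$ as a right $\Z\Gamma$-module, and since $X$ is connected the $\Gamma$-cover satisfies $\pi_0(X_\Gamma)=\operatorname{im}(\phi)\backslash\Gamma$, so $H_0(X;\Z\Gamma)\cong\Z[\operatorname{im}(\phi)\backslash\Gamma]$ with the right action by translation of cosets. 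Equivalently --- and more convenient for tracking the module structure --- one may replace $X$ by the presentation $2$-complex $K$ of a presentation $\langle x_1,\dots,x_s\mid\cdots\rangle$ of $\pi_1(X)$, which changes neither $\pi_1$ nor $\pi_0(X_\Gamma)$; then $C_0(K_\Gamma;\Z)\cong\Z\Gamma$ on the orbit of a chosen lift $\tilde v$ of the unique $0$-cell, $C_1(K_\Gamma;\Z)$ is free on the lifts of the edges based at $\tilde v$, and $\partial_1(\tilde e_i)=\tilde v\cdot(\phi(x_i)-1)$. Thus the image of $\partial_1$ is the right ideal $J:=\sum_{i}(\phi(x_i)-1)\Z\Gamma=\sum_{g\in\pi_1(X)}(\phi(g)-1)\Z\Gamma$, and $H_0(X;\Z\Gamma)\cong\Z\Gamma/J$.

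Next I would exploit that $\phi$ is nontrivial. Pick $g\in\pi_1(X)$ with $\gamma:=\phi(g)\neq 1$; then $\gamma-1\neq 0$ in $\Z\Gamma$ and $(\gamma-1)\Z\Gamma\subseteq J$. Since $\Gamma$ is PTFA, $\Z\Gamma$ is a right Ore domain by Proposition~3.4(1), so $\gamma-1$ is a unit in the skew field $\mathcal{K}$, and hence the right ideal $J\mathcal{K}\subseteq\mathcal{K}$ contains $(\gamma-1)\mathcal{K}=\mathcal{K}$, i.e.\ $J\mathcal{K}=\mathcal{K}$. Because $\mathcal{K}$ is flat over $\Z\Gamma$ (Proposition~3.4(2)), Remark~4.2(1) gives $H_0(X;\mathcal{K})\cong H_0(X;\Z\Gamma)\otimes_{\Z\Gamma}\mathcal{K}\cong(\Z\Gamma/J)\otimes_{\Z\Gamma}\mathcal{K}\cong\mathcal{K}/J\mathcal{K}=0$. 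Finally, the same flatness identification shows $H_0(X;\Z\Gamma)\otimes_{\Z\Gamma}\mathcal{K}=H_0(X;\mathcal{K})=0$, so $H_0(X;\Z\Gamma)$ is a torsion $\Z\Gamma$-module by the criterion after Definition~3.5.

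I do not expect a genuine obstacle: the argument is essentially bookkeeping. The only points needing care are keeping the right-module structure straight so that $J$ is the correct one-sided ideal, and invoking each ingredient legitimately --- that a nonzero element of the Ore domain $\Z\Gamma$ becomes invertible in $\mathcal{K}$, that $\mathcal{K}$ is $\Z\Gamma$-flat (Remark~4.2(1), Proposition~3.4(2)), and the torsion criterion from Section~3. Conceptually the heart of it is just that a nontrivial deck transformation fixes the basepoint class in $H_0$, producing an element annihilated by the nonzero element $\gamma-1$, which therefore vanishes once we pass to the skew field $\mathcal{K}$.
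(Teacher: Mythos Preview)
Your argument is correct. The paper itself does not supply a proof of this proposition --- it is simply quoted from Cochran's \emph{Non-commutative knot theory}, Proposition~3.7 --- so there is no in-paper proof to compare against. What you have written is essentially the standard argument Cochran gives: identify $H_0(X;\Z\Gamma)$ with $\Z\Gamma/J$ where $J$ is the right ideal generated by $\{\phi(g)-1:g\in\pi_1(X)\}$, observe that nontriviality of $\phi$ puts a nonzero element $\gamma-1$ in $J$, and use that $\Z\Gamma$ is an Ore domain so $\gamma-1$ is a unit in $\mathcal{K}$, whence $\mathcal{K}/J\mathcal{K}=0$ and torsion of $H_0(X;\Z\Gamma)$ follows by flatness.

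One small cosmetic point: the detour through a \emph{finite} presentation $\langle x_1,\dots,x_s\mid\cdots\rangle$ is unnecessary and, taken literally, assumes $\pi_1(X)$ is finitely generated, which is not part of the hypothesis. You already gave the cleaner description $H_0(X;\Z\Gamma)\cong\Z[\operatorname{im}(\phi)\backslash\Gamma]$ in the previous sentence; that alone suffices, since right multiplication by $\gamma-1$ (for $\gamma\in\operatorname{im}\phi$) kills every coset class and hence $H_0(X;\Z\Gamma)\cdot(\gamma-1)=0$. Either way the conclusion is the same.
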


\begin{prop}\label{a1} (\cite{Co}, proposition 3.10)

Suppose $\pi_1(X)$ is finitely generated and $\phi :\pi_1(X)\rightarrow \Gamma $ is nontrivial, then 
$$rk_{\mathcal{K}}H_1(X;\mathcal{K})=rk_{\Z\Gamma}H_1(X;\Z\Gamma)\leq \beta_1(X)-1.$$ So if $\beta_1(X)=1$, then $H_1(X;\Z\Gamma)$ is a torsion module.
\end{prop}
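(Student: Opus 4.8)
The statement has a formal half and a substantive half. The equality $rk_{\mathcal{K}}H_1(X;\mathcal{K})=rk_{\Z\Gamma}H_1(X;\Z\Gamma)$ is immediate from flatness: since $\Gamma$ is PTFA, $\mathcal{K}$ is a flat $\Z\Gamma$-module, so $H_1(X;\mathcal{K})\cong H_1(X;\Z\Gamma)\otimes_{\Z\Gamma}\mathcal{K}$, and by definition $rk_{\Z\Gamma}H_1(X;\Z\Gamma)$ is precisely the $\mathcal{K}$-rank of this localization. Granting the inequality, the last sentence follows at once: $\beta_1(X)=1$ forces the common rank to be $0$, i.e.\ $H_1(X;\Z\Gamma)\otimes_{\Z\Gamma}\mathcal{K}=0$, which is exactly the assertion that $H_1(X;\Z\Gamma)$ is a torsion module. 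So everything comes down to $rk_{\mathcal{K}}H_1(X;\mathcal{K})\le\beta_1(X)-1$.

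For this I would first normalize $X$. Degree-one homology with any coefficient system only sees the $2$-skeleton; adjoining cells of dimension $\ge 3$ changes neither $\pi_1$ nor $H_1(-;\mathcal{K})$; and for a $2$-complex $H_1(-;\mathcal{K})$ depends only on $\pi_1$ (cap off to a $K(\pi,1)$ by attaching higher cells, which does not affect homology in degree $1$, and the homology of a $K(\pi,1)$ is group homology). Since $\pi_1(X)$ is finitely generated I may therefore take $X$ to be a connected $2$-complex with one $0$-cell $x_0$, finitely many $1$-cells, say $g$ of them, and $2$-cells attached along a set of relators, without changing $\pi_1(X)$, $\beta_1(X)$ or $H_1(X;\Z\Gamma)$. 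Now apply $-\otimes_{\Z\Gamma}\mathcal{K}$ to the cellular complex of free right $\Z\Gamma$-modules $C_2\xrightarrow{\partial_2}(\Z\Gamma)^g\xrightarrow{\partial_1}\Z\Gamma$. By Proposition \ref{a2} (this is where $\phi$ nontrivial enters) we have $H_0(X;\mathcal{K})=0$, so $\partial_1\otimes\mathcal{K}$ is surjective, its kernel has rank $g-1$, and therefore
$$rk_{\mathcal{K}}H_1(X;\mathcal{K})=(g-1)-rk_{\mathcal{K}}(\partial_2).$$
Hence it suffices to show $rk_{\mathcal{K}}(\partial_2)\ge g-\beta_1(X)$.

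To prove this I would compare $\partial_2$ with its abelianization. The augmentation $\Z\Gamma\to\Z$ carries $\partial_2$ to the ordinary cellular boundary $d_2$ of $X$; as $X$ has a single $0$-cell we have $d_1=0$, so $H_1(X;\Z)=\Z^g/\mathrm{im}(d_2)$ has free rank $g-rk_{\Z}(d_2)$, which is $\beta_1(X)$; thus $rk_{\Z}(d_2)=g-\beta_1(X)$. What is left is the assertion that the rank of $\partial_2$ does not drop on passing from $\Z$ up to $\mathcal{K}$, namely $rk_{\mathcal{K}}(\partial_2)\ge rk_{\Z}(d_2)$. Were $\Z\Gamma$ commutative this would be trivial: a nonzero maximal minor of $d_2$ lifts to a minor of $\partial_2$ that augments to it, hence a nonzero element of the domain $\Z\Gamma$, hence invertible over $\mathcal{K}$. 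In the noncommutative case I would route through the torsion-free abelianization. After replacing $\Gamma$ by $\phi(\pi_1(X))$ — harmless, since then $H_*(X;\Z\Gamma)$ is induced up from the image subgroup and all ranks are unchanged — the quotient $\bar\Gamma:=\Gamma/\Gamma_r^{(1)}$ is a nontrivial, finitely generated, torsion-free abelian group, and I would invoke the monotonicity of homological $\Z\Gamma$-rank under quotients of PTFA coefficient systems (as in \cite{Ha}) to reduce $rk_{\mathcal{K}}H_1(X;\mathcal{K})\le\beta_1(X)-1$ to the case of the commutative system $\bar\Gamma$, where the minor argument just given applies verbatim. Putting this last reduction — the passage from the noncommutative coefficient ring to the commutative one — on a firm footing is the step I expect to require the most care; the rest is bookkeeping with the cellular chain complex and Proposition \ref{a2}.
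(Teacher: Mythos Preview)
The paper gives no proof of this proposition; it is simply quoted from \cite{Co}. Your overall architecture matches Cochran's original argument: pass to a $2$-complex model with one $0$-cell and $g$ one-cells, use Proposition~\ref{a2} to get $\ker(\partial_1\otimes\mathcal{K})\cong\mathcal{K}^{g-1}$, and reduce everything to the rank comparison $rk_{\mathcal{K}}(\partial_2)\ge g-\beta_1(X)=rk_{\Z}(d_2)$.

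The step you flag as delicate is indeed the crux, but your proposed fix is the wrong tool. The monotonicity results in \cite{Ha} concern the torsion degrees $\bar\delta_n$, not the ranks $r_n$, and in any event proving a rank inequality $rk_{\mathcal{K}}H_1\le rk_{\bar{\mathcal{K}}}H_1$ for the passage from $\Gamma$ to its abelianization $\bar\Gamma$ is no easier in the noncommutative setting than the comparison with $\Z$ you are trying to sidestep --- the minor argument you give for the commutative $\bar\Gamma$ does not help with the reduction from $\Gamma$ to $\bar\Gamma$. Cochran avoids the detour entirely by invoking a theorem of Strebel (see \cite{Co}): for $\Gamma$ PTFA, a homomorphism between free $\Z\Gamma$-modules whose image under the augmentation $\Z\Gamma\to\Z$ is injective is itself injective. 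Choose $r=rk_{\Z}(d_2)$ of the $2$-cells whose $d_2$-images are $\Z$-linearly independent in $\Z^g$; Strebel's theorem says the corresponding restriction $(\Z\Gamma)^r\to(\Z\Gamma)^g$ of $\partial_2$ is injective, hence so is its localization to $\mathcal{K}$, giving $rk_{\mathcal{K}}(\partial_2)\ge r$ in one stroke. With that single substitution your proof is complete and coincides with Cochran's.
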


Now let $G=\pi_1(X,x_0)$ and $\Gamma_n:=G/G_r^{(n+1)}$ and $\phi_n:G\twoheadrightarrow \Gamma_n$ be the projection. Define the $n$-th order cover $X_{\Gamma_n}\xrightarrow{p_n} X$ to be the regular $\Gamma_n$-cover. Then $\pi_1(X_{\Gamma_n})=G_r^{(n+1)}$ and the deck group is isomorphic to $\Gamma_n$.

If $R$ is any ring with $\Z\Gamma_n\subseteq R\subseteq \mathcal{K}_n=\Z\Gamma_n(\Z\Gamma_n \setminus \{0\})^{-1}$, then $R$ is a $\Z\Gamma_n$-bimodule. Then $H_*(X;R)$ can be considered as a right $R$-module.

\begin{defi} The n-th order Alexander modules of a CW complex $X$ are
$$\mathcal{A}_n^{\Z}(X) = H_1(X; \Z\Gamma_n) = H_1(X_{\Gamma_n}; \Z)=\frac{G_r^{(n+1)}}{[G_r^{(n+1)}, G_r^{(n+1)}]}$$ as a right $\Z \Gamma_n$-module.

Let $\bar{\mathcal{A}}_n^{\Z}(X) = T_{\Z \Gamma_n}H_1(X; \Z\Gamma_n)$, which is the $\Z \Gamma_n$ torsion submodule of $H_1(X; \Z\Gamma_n)$.

\end{defi}

One can define a right $\Z \Gamma_n=\Z[G/G_r^{(n+1)}]$-module structure on $G_r^{(n+1)}/[G_r^{(n+1)}, G_r^{(n+1)}]$ .

\begin{rmk}\rm (See \cite{Ha})

1) $\mathcal{A}_n^{\Z}(X)/\{\Z \text{-torsion}\}\cong G_r^{(n+1)}/G_r^{(n+2)}$.

2) $\mathcal{A}_n^{\Z}(X) $ and $\overline{\mathcal{A}}_n^{\Z}(X)$ only depend on $G=\pi_1(X,x_0)$.
\end{rmk}

\begin{defi}

The $n$-th order rank of $X$ is:
$$r_n(X) = rk_{\mathcal{K}_n}H_1(X;\mathcal{K}_n)=rk_{\Z \Gamma_n}H_1(X;\Z \Gamma_n)$$
\end{defi}

\begin{prop} (\cite{Ha}, propostition 5.6)
Let $\Gamma$ be PTFA and $\phi :\pi_1(X,x_0)\rightarrow \Gamma $ be nontrivial, then $rk_{\mathcal{K}}H_1(X;\mathcal{K})=rk_{\mathcal{K}}H_1(X,x_0;\mathcal{K})-1$ and $T_{R}H_1(X;\mathcal{R})\cong T_{R}H_1(X,x_0;\mathcal{R})$ for any ring $\mathcal{R}$ such that $\Z\Gamma \subseteq \mathcal{R} \subseteq \mathcal{K}$.

\end{prop}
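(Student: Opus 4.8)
The plan is to derive both statements from the long exact sequence of the pair $(X,x_0)$ together with the additivity of $\mathrm{rk}_\mathcal{K}$ on short exact sequences (Proposition 2.5(3)) and the basic vanishing result $H_0(X;\mathcal{K})=0$ from Proposition \ref{a2}. First I would write down the long exact sequence in $\mathcal{K}$-coefficients for the pair $(X,x_0)$:
\[
\cdots \to H_1(x_0;\mathcal{K}) \to H_1(X;\mathcal{K}) \to H_1(X,x_0;\mathcal{K}) \to H_0(x_0;\mathcal{K}) \to H_0(X;\mathcal{K}) \to 0.
\]
Here $H_1(x_0;\mathcal{K})=0$ since the point is $0$-dimensional, $H_0(x_0;\mathcal{K})\cong\mathcal{K}$ (the cover of a point, with $\mathcal{K}$-coefficients, is just $\mathcal{K}$), and $H_0(X;\mathcal{K})=0$ by Proposition \ref{a2} because $\phi$ is nontrivial and $\Gamma$ is PTFA. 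So the sequence reduces to the short exact sequence $0 \to H_1(X;\mathcal{K}) \to H_1(X,x_0;\mathcal{K}) \to \mathcal{K} \to 0$. Applying additivity of $\mathrm{rk}_\mathcal{K}$ and noting $\mathrm{rk}_\mathcal{K}(\mathcal{K})=1$ gives the rank equality $\mathrm{rk}_\mathcal{K}H_1(X;\mathcal{K}) = \mathrm{rk}_\mathcal{K}H_1(X,x_0;\mathcal{K}) - 1$.

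For the torsion statement, I would run the same long exact sequence but in $\mathcal{R}$-coefficients, where $\mathbb{Z}\Gamma \subseteq \mathcal{R} \subseteq \mathcal{K}$. The key observation is that the connecting/augmentation map $H_1(X,x_0;\mathcal{R}) \to H_0(x_0;\mathcal{R}) \cong \mathcal{R}$ has image that is a free (hence torsion-free) $\mathcal{R}$-submodule of $\mathcal{R}$: indeed, since $\phi$ is nontrivial, $H_0(X;\mathcal{R})$ is a torsion module (Proposition \ref{a2}), and after tensoring with $\mathcal{K}$ the map becomes surjective, so the image of $H_1(X,x_0;\mathcal{R}) \to \mathcal{R}$ is a nonzero ideal, which in a domain is torsion-free. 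Because the kernel of this map is exactly the image of $H_1(X;\mathcal{R}) \to H_1(X,x_0;\mathcal{R})$, and because the torsion submodule of a module maps into the torsion submodule of any quotient while the image here lands in a torsion-free module, one concludes that the torsion submodules $T_\mathcal{R}H_1(X;\mathcal{R})$ and $T_\mathcal{R}H_1(X,x_0;\mathcal{R})$ are carried isomorphically onto each other by the map in the long exact sequence; combined with the fact that $H_1(X;\mathcal{R}) \to H_1(X,x_0;\mathcal{R})$ is injective (again because $H_1(x_0;\mathcal{R})=0$), this gives the claimed isomorphism $T_\mathcal{R}H_1(X;\mathcal{R}) \cong T_\mathcal{R}H_1(X,x_0;\mathcal{R})$.

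The main obstacle I anticipate is the careful bookkeeping in the torsion argument: one must verify that no torsion is lost or created when passing through the connecting map, which requires knowing that the image of $H_1(X,x_0;\mathcal{R})$ in $H_0(x_0;\mathcal{R})$ is torsion-free, and this in turn rests on the fact that over an Ore domain a submodule of a free rank-one module is torsion-free. A secondary subtlety is making sure the long exact sequence of the pair is valid with coefficients in the bimodule $\mathcal{R}$ and that the identifications $H_*(x_0;\mathcal{R}) = \mathcal{R}$ concentrated in degree $0$ hold — this is routine since a point has a contractible universal cover, but it should be stated. Once these points are in place, the two assertions follow formally from the exactness and the rank/torsion properties recorded in Propositions 2.5, \ref{a2}, and \ref{a1}.
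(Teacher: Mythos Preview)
The paper does not prove this proposition; it is stated with a citation to Harvey (\cite{Ha}, Proposition 5.6) and no argument is given in the present paper. Your approach via the long exact sequence of the pair $(X,x_0)$ is exactly the standard one used in Harvey's paper, and it is correct: the rank identity drops out immediately from the short exact sequence $0\to H_1(X;\mathcal{K})\to H_1(X,x_0;\mathcal{K})\to \mathcal{K}\to 0$, and the torsion identification follows because the injection $H_1(X;\mathcal{R})\hookrightarrow H_1(X,x_0;\mathcal{R})$ has cokernel sitting inside the torsion-free module $\mathcal{R}$.

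One small clean-up: you do not actually need the image in $\mathcal{R}$ to be a \emph{nonzero} ideal for the torsion argument---torsion-free (which includes the zero module) suffices, so you can drop the appeal to surjectivity after tensoring with $\mathcal{K}$ in that part of the argument. Also, implicit in the torsion step is that $\mathcal{R}$ is an Ore domain (so that the torsion elements form a submodule and a submodule of a torsion-free module is torsion-free); this holds for the rings of interest here since $\mathcal{R}$ is a localization of the Ore domain $\mathbb{Z}\Gamma$, but it is worth saying explicitly.
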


\begin{defi}
Let $X$ be a finite CW complex. For each primitive $\psi \in H^1(X;\Z)$, the $n$-th order localized Alexander module is defined to be 
 $$\mathcal{A}_n(X) = H_1(X;R_n),$$ viewed as a right $R_n$-module, where $R_n=\Z \Gamma_n(\Z\bar{\Gamma}_n \setminus \{0\})^{-1}$. If we choose a splitting $\xi$ to identify $R_n$ with $\mathbb{K}_n[t^{\pm 1}]$, we define $$\mathcal{A}_n^{\xi}(X) = H_1(X;\mathbb{K}_n[t^{\pm 1}])$$ as a right  $\mathbb{K}_n[t^{\pm 1}]$-module. Set
$$\bar{\mathcal{A}}_n(X) = T_{R_n}H_1(X;R_n),$$ viewed as a right $R_n$-module. If we choose a splitting $\xi$ to identify $R_n$ with $\mathbb{K}_n[t^{\pm 1}]$, we define $$\bar{\mathcal{A}}_n^{\xi}(X) = T_{\mathbb{K}_n[t^{\pm 1}]}H_1(X;\mathbb{K}_n[t^{\pm 1}])$$ as a right  $\mathbb{K}_n[t^{\pm 1}]$-module.

\end{defi}

\begin{rmk}\rm (\cite{HP}, remark 3.8)

Notice that $\mathcal{A}_n(X)$ and $\bar{\mathcal{A}}_n(X)$ depend on $\psi \in H^1(X;\Z)$, whereas $\mathcal{A}_n^{\xi}(X)$ and $\bar{\mathcal{A}}_n^{\xi}(X)$ depend on $\psi \in H^1(X;\Z)$ and the splitting $\xi:\Z\rightarrow \Gamma_n$.
\end{rmk}

\begin{rmk}\rm (See \cite{Ha})

Since $\mathbb{K}_n[t^{\pm 1}]$ is a (non-commutative) PID, we have

$$\mathcal{A}_n^{\xi}(X) \cong \left( \oplus_{i=1}^m \frac{\mathbb{K}_n[t^{\pm 1}]}{p_i(t)\mathbb{K}_n[t^{\pm 1}]}\right)\oplus \mathbb{K}_n[t^{\pm 1}]^{r_n(X)}$$

and the torsion part $$\bar{\mathcal{A}}_n^{\phi}(X)\cong \left( \oplus_{i=1}^m \frac{\mathbb{K}_n[t^{\pm 1}]}{p_i(t)\mathbb{K}_n[t^{\pm 1}]}\right)$$ is a finitely generated $\mathbb{K}_n[t^{\pm 1}]$-module.
\end{rmk}

\begin{defi}
The $n$-th order degree of X is defined to be:
$$\delta_n(X) = rk_{\mathbb{K}_n}\mathcal{A}_n(X) = rk_{\mathbb{K}_n}\mathcal{A}_n^{\xi}(X)$$
and 
$$\bar{\delta}_n(X) = rk_{\mathbb{K}_n}\bar{\mathcal{A}}_n(X) = rk_{\mathbb{K}_n}\bar{\mathcal{A}}_n^{\xi}(X)$$
\end{defi}

\begin{rmk}\rm (\cite{Ha}, \cite{HP})

1) If $\delta_n(X)<\infty$, then $\delta_n(X)=\bar{\delta}_n(X)$.

2) $\bar{\delta}_n(X)$ equals to the degree of the polynomial $\prod\limits_{i=1}^m p_i(t)$.
\end{rmk}

\begin{rmk} \rm
The followings are equivalent: (\cite{HP})

1) $r_n(X)=rk_{\mathcal{K}_n}H_1(X;\mathcal{K}_n)=rk_{\Z\Gamma_n}H_1(X;\Z\Gamma_n)=0 $,

2) $\mathcal{A}_n^{\xi}(X)=\bar{\mathcal{A}}_n^{\xi}(X)$ as torsion $\mathbb{K}_n[t^{\pm 1}]$-modules for any $\xi$,

3) $\delta_n(X)<\infty$,

4) $\mathcal{A}_n^{\Z}(X)$ is a torsion $\Z \Gamma_n$-module,

5) $\mathcal{A}_n(X)=\bar{\mathcal{A}}_n(X)$ as a torsion $R_n$-module.
\end{rmk}

\section{Higher-Order Invariants of Complex Hypersurface Complement}

In this section, we define the higher-order Alexander modules of a complex hypersurface in general position at infinity. For the case of plane curves, see \cite{HP}.

Let $V$ be a reduced hypersurface in $\CP (m\geq 2)$ of degree $d$, defined by a homogeneous equation $f=f_1\cdots f_r=0$ with $r$ irreducible components $V_i=\{f_i=0\}$. We fix a generic hyperplane $H$ in $\CP$, which is called the hyperplane at infinity. Let $V_a=V \setminus (V\cap H)$ be the affine part of $V$. Since $H$ is generic, $V$ and $H$ interest transversely in the stratified sense. Let $\U$ be the affine hypersurface complement $$\U:=\CP \setminus (V\cup H) = \C^{m+1} \setminus V_a$$ with the natural identification of $\C^{m+1}$ and $\CP \setminus H$.

It is known that $H_1(\U) \cong \Z^r$, generated by the meridian loops $\gamma_i$ about the nonsingular part of each irreducible component $V_i$, for $i=1,...,r$. If $\gamma_{\infty}$ denotes the meridian about the hyperplane at infinity, then there is a relation: $ \gamma_{\infty}+\sum d_i\gamma_i=0$, where $d_i=\text{deg}(V_i)$ (cf. \cite{DB}, theorem 4.1.3, 4.1.4).

Consider $\psi: \pi_1 (\U) \xrightarrow{ab} H_1(\U) \rightarrow\Z$, which equals to the linking number homomorphism
$$ \pi_1 (\U) \xrightarrow{[\alpha] \rightarrow lk(\alpha,V \cup -dH)} \Z.$$

Note that $\phi:\pi_1(U)\rightarrow \Gamma_n=G/G_r^{(n+1)} $ is surjective. Let $\U_{\Gamma_n}$ be the corresponding $\Gamma_n$-cover.
Under the action of the deck group $\Gamma_n$, $C_*(\U_{\Gamma_n})$ becomes a complex of right $\Z \Gamma_n$-modules.

\begin{defi} The n-th order Alexander modules of $V$ are

$$\mathcal{A}_{n,i}^{\Z}(\U) = H_i(\U; \Z\Gamma_n) = H_i(\U_{\Gamma_n}; \Z)$$ as right $\Z \Gamma_n$-modules.

The n-th order ranks of (the complement of) $V$ are
$$r_{n,i}(\U) = rk_{\mathcal{K}_n}H_i(\U;\mathcal{K}_n)=rk_{\Z \Gamma_n}H_i(\U;\Z \Gamma_n).$$
\end{defi}

\begin{defi}
(1) The n-th order localized Alexander modules of the hypersurface $V$ are defined to be $$\mathcal{A}_{n,i}(\U) = H_i(\U;R_n)$$ as a right $R_n$-module. If we choose a splitting $\xi$ to identify $R_n$ with $\mathbb{K}_n[t^{\pm 1}]$, we define $$\mathcal{A}_{n,i}^{\xi}(\U) = H_i(\U;\mathbb{K}_n[t^{\pm 1}]).$$

(2) The n-th order degrees of V are defined to be:
$$\delta_{n,i}(\U) = rk_{\mathbb{K}_n}\mathcal{A}_{n,i}(\U) = rk_{\mathbb{K}_n}\mathcal{A}_{n,i}^{\xi}(\U).$$
\end{defi}

Since the isomorphism between $R_n$ and $\mathbb{K}_n[t^{\pm 1}]$ depends on the choice of splitting $\xi$, we cannot define a higher-order Alexander polynomial in a meaningful way (such a polynomial depends on the splitting), as we do in the infinite cyclic case. However, the degree of the associated higher-order Alexander polynomial does not depend on the choice of splitting.

Because $\U$ is the complement of an affine hypersurface, $\U$ is isomorphic to a smooth affine hypersurface in 
$\C^{m+2}$, hence is a Stein manifold of complex dimension $m+1$. Therefore $\U$ has the homotopy type of of a finite CW-complex of real dimension $m+1$ (cf. \cite{DB}, theorem 1.6.7, 1.6.8). Hence, we have $H_i(\U;\Kt) =0$ for $i>m+1$ and $H_{m+1}(\U;\Kt)$ is a free $\Kt$-module. 

\subsection{Connection with $L^2$-Betti numbers}

For those of who are interested in $L^2$-Betti numbers, there is a connection between higher-order degrees and $L^2$-Betti numbers (See \cite{L2}).

Let $M$ be a topological space and let $\alpha:\pi_1(M)\rightarrow \Gamma$ be an epimorphism to a group. Denote by $M_{\Gamma}$ the regular covering of $M$ corresponding to $\alpha$, and consider the $\mathcal{N}(\Gamma)$-chain complex
$$C_*(M_{\Gamma})\otimes_{\Z\Gamma}\mathcal{N}(\Gamma)$$  where $\mathcal{N}(\Gamma)$ is the von Neumann algebra of $\Gamma$ and $C_*(M_{\Gamma})$ is the singular chain complex of $M_{\Gamma}$ with right $\Gamma$-action given by covering translation, and $\Gamma$ acts canonically on $\mathcal{N}(\Gamma)$ on the left. 

\begin{defi}

The $i$-th $L^2$-Betti number of the pair $(M,\alpha)$ is defined as
$$ b_i^{(2)}(M,\alpha):=\text{dim}_{\mathcal{N}(\Gamma)}(H_i(C_*(M_{\Gamma})\otimes_{\Z\Gamma}\mathcal{N}(\Gamma)))\in [0,\infty].$$

\end{defi}

Back to our case, $\psi:\pi_1(\U)\rightarrow \Z$ is the total linking number homomorphism and \linebreak
$\phi:\pi_1(\U)\rightarrow \Gamma_n$ is the quotient map. Denote by $\widetilde{\U}$ the infinite cyclic cover of $\U$ and $\widetilde{\phi}:\pi_1(\widetilde{\U})\rightarrow \bar{\Gamma}_n$.

$$\xymatrix{
              \pi_1(\U) \ar[r]_{\psi}  \ar[d]_{\phi}          &  \Z \\
 \Gamma_n   \ar[ru]_{\bar{\psi}}      &  \\                              
}$$
Then we have the followings:
$$r_{n,i}(\U) = \text{rk}_{\mathcal{K}_n}H_i(\U;\mathcal{K}_n)=b_i^{(2)}(\U,\phi)$$
$$\delta_{n,i}(\U) = \text{rk}_{\mathbb{K}_n}H_i(\U;\mathbb{K}_n[t^{\pm 1}])= \text{rk}_{\mathbb{K}_n}H_i(\widetilde{\U};\mathbb{K}_n)=b_i^{(2)}(\widetilde{\U},\widetilde{\phi}).$$
Notice that $\mathcal{K}_n$ is the skew field of $\Z\Gamma_n$ and $\mathbb{K}_n$ is the skew field of $\Z\bar{\Gamma}_n$.

\subsection{Bounds for higher-order degrees}

Applying the results by Maxim in \cite{L2}, we get the following theorem for higher-order degrees of hypersurface complements.
\begin{theorem}\label{infinity-bound} (\cite{L2}, Theorem 3.1, 3.2, 3.3) 
Let $V$ be a reduced hypersurface in $\CP$ transversal to the hyperplane at infinity $H$, and let $S_{\infty}^{2m+1}$ be a sphere in $\C ^{m+1}\cong \CP\setminus H$ of a sufficiently large radius (that is, the boundary of a small tubular neighborhood in $\CP$ of $H$). Denote by $S_{\infty}^{2m+1}\cap V$ the link at infinity, and let $$\U_{\infty}:=S_{\infty}^{2m+1}\setminus S_{\infty}^{2m+1}\cap V.$$ Then the higher-order degrees satisfy $$\delta_{n,i}(\U) \leq \delta_{n,i}(\U_{\infty})$$ for $i\leq m$.  Furthermore, the latter are finite.
\end{theorem}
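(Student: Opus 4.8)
\emph{Proof strategy.} The plan is to prove the inequality by comparing $\U$ with its link-at-infinity complement $\U_\infty$ through the inclusion $j\colon\U_\infty\hookrightarrow\U$, using the long exact sequence of the pair $(\U,\U_\infty)$, and to prove the finiteness assertion by means of the Milnor fibration at infinity. The geometric input — this is where transversality of $V$ and $H$ is used, and it is the affine Lefschetz theorem in this setting, resting on Morse theory for Stein spaces in the spirit of Hamm, Libgober and Dimca (see \cite{L2}) — is that $\U$ has the homotopy type of a CW complex obtained from $\U_\infty$ by attaching cells of real dimension $\geq m+1$. First I would record the consequences of this. Since $m+1\geq 3$, attaching such cells does not change the fundamental group, so $j_*\colon\pi_1(\U_\infty)\to\pi_1(\U)$ is an isomorphism; hence $G=\pi_1(\U)$, the quotients $\Gamma_n$ and $\bar{\Gamma}_n$, the epimorphism $\psi$ and a splitting $\xi\colon\Z\to\Gamma_n$ are common to $\U$ and $\U_\infty$, the coefficient systems $\Z\Gamma_n$, $R_n\cong\Kt$ and $\mathcal{K}_n$ on $\U$ pull back along $j$ to the corresponding ones on $\U_\infty$, and the $\Gamma_n$-cover of $\U$ restricts over $\U_\infty$ to its $\Gamma_n$-cover. (The restriction $\psi|_{\U_\infty}$ is still primitive and equals the linking number with $V$, since loops lying on the large sphere $S^{2m+1}_\infty\subset\C^{m+1}$ do not link $H$.)

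Granting this, the inequality is formal. As $(\U,\U_\infty)$ has the homotopy type of a relative CW complex with cells only in dimensions $\geq m+1$, the relative chain complex $C_*(\U_{\Gamma_n},(\U_\infty)_{\Gamma_n};\Z)$ is $\Z\Gamma_n$-chain homotopy equivalent to a complex of free $\Z\Gamma_n$-modules concentrated in degrees $\geq m+1$; tensoring over $\Z\Gamma_n$ with the bimodule $\Kt$ gives $H_i(\U,\U_\infty;\Kt)=0$ for $i\leq m$. The long exact sequence of the pair then yields a surjection of right $\Kt$-modules $H_i(\U_\infty;\Kt)\twoheadrightarrow H_i(\U;\Kt)$ for $i\leq m$. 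Since $\operatorname{rk}_{\K}(-)$ is simply the dimension over the skew field $\K$ (after restricting scalars along $\K\hookrightarrow\Kt$), it does not increase under surjections, so for $i\leq m$
$$\delta_{n,i}(\U)=\operatorname{rk}_{\K}H_i(\U;\Kt)\ \leq\ \operatorname{rk}_{\K}H_i(\U_\infty;\Kt)=\delta_{n,i}(\U_\infty).$$

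To see that $\delta_{n,i}(\U_\infty)<\infty$, I would invoke the Milnor fibration at infinity: because $V$ is transversal to $H$, a dehomogenization $g$ of a defining polynomial of $V$ has no critical values at infinity near $0$, and for a sufficiently large sphere $S^{2m+1}_\infty$ the map $g/|g|$ makes $\U_\infty$ fiber over $S^1$, with fibre $F_\infty$ a smooth affine hypersurface of complex dimension $m$ (hence homotopy equivalent to a finite CW complex) and with fibration map inducing $\psi|_{\U_\infty}$ on $\pi_1$. Thus the infinite cyclic cover $\widetilde{\U}_\infty$ of $\U_\infty$ with respect to $\psi$ is homotopy equivalent to $F_\infty$; being homotopy equivalent to a finite CW complex, it has $H_i(\widetilde{\U}_\infty;\K)$ finite-dimensional over the skew field $\K$. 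Using the identification $H_i(\U_\infty;\Kt)\cong H_i(\widetilde{\U}_\infty;\K)$ recalled above, $\delta_{n,i}(\U_\infty)=\operatorname{rk}_{\K}H_i(\widetilde{\U}_\infty;\K)<\infty$, and with the displayed inequality this also gives $\delta_{n,i}(\U)<\infty$ for $i\leq m$.

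I expect the main obstacle to be the geometric input quoted in the first paragraph — the $m$-connectedness of $(\U,\U_\infty)$, equivalently the cell structure — since this is where transversality at infinity genuinely enters and it requires the affine Lefschetz theorem for Stein spaces. However, the statement to be proved is precisely \cite{L2}, Theorems 3.1, 3.2 and 3.3, so this may be quoted rather than reproved; the only other point needing care is bookkeeping, namely checking that the rings $\Kt$, $\Z\bar{\Gamma}_n$, $\K$ and the splitting $\xi$ used to define $\delta_{n,i}(\U_\infty)$ are identified, via $j_*$, with those used for $\delta_{n,i}(\U)$, so that the comparison map above is honestly a map of $\Kt$-modules.
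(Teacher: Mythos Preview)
Your proposal is correct and follows essentially the same route as the cited reference \cite{L2} and as the analogous argument the paper itself spells out in Section~6 (with $X=\partial N(V)$ in place of $\U_\infty$): one uses the affine Lefschetz-type input that $(\U,\U_\infty)$ is $m$-connected to get compatible $\Gamma_n$-covers and a surjection $H_i(\U_\infty;\Kt)\twoheadrightarrow H_i(\U;\Kt)$ for $i\le m$, and then the Milnor fibration at infinity together with the identification $H_i(\U_\infty;\Kt)\cong H_i(F_\infty;\K)$ (exactly as in the paper's Lemma~6.4 for local link complements) gives finiteness. There is nothing to add; your bookkeeping about $\pi_1$, the coefficient rings, and the splitting $\xi$ is precisely the point the paper handles implicitly when it says the covers are ``compatible.''
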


\begin{cor}\label{finite}
Suppose $V$ is a reduced hypersurface in $\CP$ transversal to the hyperplane at infinity $H$. Let $\U=\CP \setminus (V\cup H)$. Then for $ i\leq m$, the higher-order Alexander modules $H_i(\U;\Z \Gamma_n)$ are torsion $\Z \Gamma_n$-modules.
\end{cor}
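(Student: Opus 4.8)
The plan is to read off the corollary from the finiteness statement in Theorem \ref{infinity-bound}, using the (degreewise) equivalence between finiteness of the higher-order degree, vanishing of the higher-order rank, and torsionness of the higher-order Alexander module. Since $\phi:\pi_1(\U)\twoheadrightarrow\Gamma_n$ is nontrivial and $\Gamma_n$ is PTFA, $\Z\Gamma_n$ is an Ore domain, and by definition a $\Z\Gamma_n$-module is torsion exactly when its rank is $0$, i.e. when $H_i(\U;\Z\Gamma_n)\otimes_{\Z\Gamma_n}\mathcal{K}_n=H_i(\U;\mathcal{K}_n)=0$ (using flatness of $\mathcal{K}_n$ over $\Z\Gamma_n$). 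Thus it suffices to prove $r_{n,i}(\U)=rk_{\Z\Gamma_n}H_i(\U;\Z\Gamma_n)=0$ for $i\le m$, and I would obtain this by showing $r_{n,i}(\U)=0$ whenever $\delta_{n,i}(\U)<\infty$.

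To pass from $\delta_{n,i}(\U)<\infty$ to $r_{n,i}(\U)=0$, I would work over the noncommutative PID $R_n\cong\Kt$. Because $\U$ has the homotopy type of a finite CW-complex (it is a Stein manifold of complex dimension $m+1$, as recalled above), $C_*(\U_{\Gamma_n};\Z)$ is a bounded complex of finitely generated free $\Z\Gamma_n$-modules; tensoring with the flat $\Z\Gamma_n$-module $\Kt$ yields a bounded complex of finitely generated free $\Kt$-modules, so each $\mathcal{A}_{n,i}^{\xi}(\U)=H_i(\U;\Kt)$ is finitely generated over the PID $\Kt$. Hence $\mathcal{A}_{n,i}^{\xi}(\U)\cong\bigl(\oplus_j \Kt/p_j(t)\Kt\bigr)\oplus\Kt^{\,r_{n,i}(\U)}$, and since each cyclic torsion piece $\Kt/p_j(t)\Kt$ is finite-dimensional over $\mathbb{K}_n$ while $\Kt$ itself is infinite-dimensional over $\mathbb{K}_n$, the quantity $\delta_{n,i}(\U)=rk_{\mathbb{K}_n}\mathcal{A}_{n,i}^{\xi}(\U)$ is finite if and only if the free part is absent, i.e. $r_{n,i}(\U)=0$. (One also notes $\mathcal{K}_n$ is the classical skew field of fractions of $R_n$, so the free $R_n$-rank of $\mathcal{A}_{n,i}(\U)$ really is $r_{n,i}(\U)$; this is the degreewise analogue of the list of equivalences at the end of Section 4.)

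Assembling: for $i\le m$, Theorem \ref{infinity-bound} gives $\delta_{n,i}(\U)\le\delta_{n,i}(\U_\infty)<\infty$; by the previous paragraph $r_{n,i}(\U)=0$; hence $rk_{\Z\Gamma_n}H_i(\U;\Z\Gamma_n)=0$, so $H_i(\U;\Z\Gamma_n)$ is a torsion $\Z\Gamma_n$-module. I expect the only genuine work to be the second paragraph — establishing $\delta_{n,i}(\U)<\infty\Leftrightarrow r_{n,i}(\U)=0$ in each degree $i\le m$ — since this is where the finite-CW/Stein input is essential and where one must check that $\mathbb{K}_n$-rank genuinely detects the free $\Kt$-summands; the remaining steps are formal manipulations with the flat tower $\Z\Gamma_n\subseteq R_n\cong\Kt\subseteq\mathcal{K}_n$.
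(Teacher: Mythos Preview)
Your proposal is correct and follows essentially the same route as the paper: the paper's one-line argument after the corollary simply states that finiteness of $\delta_{n,i}(\U)$ (from Theorem \ref{infinity-bound}) gives $r_{n,i}(\U)=0$ and hence torsionness of $H_i(\U;\Z\Gamma_n)$. Your second paragraph supplies the details the paper omits in higher degree---namely, the structure theorem over the PID $\Kt$ together with the finite-CW input to get finite generation---since the equivalence list at the end of Section~4 is only recorded there for $i=1$; this extra care is appropriate and the argument is sound.
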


Since $\delta_{n,i}(\U)$ are finite, $r_{n,i}(\U)=0$ and $H_i(\U;\Z \Gamma_n)$ are torsion $\Z \Gamma_n$-modules for $i\leq m$.

\begin{cor} (\cite{L2}, Theorem 3.3 (2))
$$H_i(\U;\mathcal{K}_n) =0\text{ for }i\neq m+1$$ and $$H_{m+1}(\U;\mathcal{K}_n)\cong \mathcal{K}_n^{(-1)^{m+1}\chi(\U)}$$ as a $\mathcal{K}_n$-module. 

\end{cor}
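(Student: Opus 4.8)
The plan is to derive the corollary directly from Theorem \ref{infinity-bound} together with the basic structural facts about $\U$ recorded earlier in Section 5. First I would use the finiteness of $\delta_{n,i}(\U)$ for $i \leq m$: by Remark 4.12 (the equivalence list), $\delta_{n,i}(\U) < \infty$ forces $r_{n,i}(\U) = rk_{\mathcal{K}_n} H_i(\U;\mathcal{K}_n) = 0$, i.e. $H_i(\U;\mathcal{K}_n) = 0$ for all $i \leq m$. Note that this equivalence, as stated in the excerpt for the $X$ of Section 4, applies here because $\mathcal{A}_{n,i}(\U)$ and $\delta_{n,i}(\U)$ are defined by the same recipe; in particular $H_i(\U;\mathcal{K}_n) \cong H_i(\U;\Z\Gamma_n)\otimes_{\Z\Gamma_n}\mathcal{K}_n$ by flatness of $\mathcal{K}_n$ (Prop.\ 3.4), so vanishing of the former is exactly torsionness of the latter, which already gives Corollary \ref{finite}.

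Next I would handle the two remaining degrees $i = m+1$ and $i > m+1$. Since $\U$ is a Stein manifold of complex dimension $m+1$, it has the homotopy type of a finite CW complex of real dimension $m+1$ (cited from \cite{DB} in the paragraph preceding Section 5.1); hence $C_*(\U_{\Gamma_n})$ is a finite complex of free $\Z\Gamma_n$-modules concentrated in degrees $0,\dots,m+1$, so $H_i(\U;\mathcal{K}_n) = 0$ for $i > m+1$ automatically, and $H_{m+1}(\U;\mathcal{K}_n)$ is a (finitely generated) free $\mathcal{K}_n$-module since $\mathcal{K}_n$ is a skew field (Prop.\ 3.4(3)) and it is the top homology of a complex of free modules with a surjection onto it. It remains to compute its rank.

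For the rank I would invoke the Euler characteristic identity of Remark 3.6: for the finite free $\Z\Gamma_n$-complex $\mathcal{C} = C_*(\U_{\Gamma_n})$, tensoring with $\mathcal{K}_n$ preserves the Euler characteristic, so
$$\chi(\U) = \sum_{i=0}^{m+1} (-1)^i rk_{\mathcal{K}_n} H_i(\U;\mathcal{K}_n).$$
Because all terms with $i \leq m$ vanish by the previous step, the sum collapses to $(-1)^{m+1} rk_{\mathcal{K}_n} H_{m+1}(\U;\mathcal{K}_n)$, whence $rk_{\mathcal{K}_n} H_{m+1}(\U;\mathcal{K}_n) = (-1)^{m+1}\chi(\U)$ and therefore $H_{m+1}(\U;\mathcal{K}_n) \cong \mathcal{K}_n^{(-1)^{m+1}\chi(\U)}$ as a $\mathcal{K}_n$-module. (One should also remark that $\chi(\U)$ computed from the cellular chain complex of $\U$ itself agrees with the one computed from $\mathcal{C}$ after dividing out the $\Z\Gamma_n$-action, since $\U_{\Gamma_n} \to \U$ is a $\Gamma_n$-cover and ranks over $\Z\Gamma_n$ of the cellular chains of $\U_{\Gamma_n}$ equal the number of cells of $\U$; this is standard but worth a sentence.)

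The main obstacle is essentially bookkeeping rather than a genuine difficulty: one must make sure the vanishing $H_i(\U;\mathcal{K}_n)=0$ for $i\le m$ is legitimately inherited from Theorem \ref{infinity-bound} via the equivalences in Remark 4.12, and that the homotopy-finiteness of $\U$ is used correctly to bound the complex so that the Euler characteristic argument closes. Since $H_{m+1}$ being free over a skew field is automatic, no delicate torsion analysis is needed in top degree. Thus the proof is short: cite Theorem \ref{infinity-bound} and Remark 4.12 for the low-degree vanishing, cite the Stein/CW structure for the range, and run the Euler characteristic computation of Remark 3.6 to pin down the top rank.
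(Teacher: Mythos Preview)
Your proposal is correct and follows the standard line of argument. The paper itself does not spell out a proof of this corollary; it simply cites \cite{L2}, Theorem 3.3(2), and the sentence immediately preceding it (``Since $\delta_{n,i}(\U)$ are finite, $r_{n,i}(\U)=0$ and $H_i(\U;\Z \Gamma_n)$ are torsion $\Z\Gamma_n$-modules for $i\leq m$'') already records the low-degree vanishing step you describe. Your Stein/CW bound for $i>m+1$ and the Euler-characteristic computation via Remark 3.6 are exactly the expected way to finish, so there is nothing to add beyond noting that the equivalence list you invoke is literally stated only for $i=1$ but its proof works verbatim in every degree.
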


\begin{rmk}\rm (See \cite{HP}, remark 3.3, remark 3.4)

If G is the fundamental group of a link complement in $S^3$ or a hypersurface complement, then

1) $U_{\Gamma_0}$ is the universal abelian cover.

2) If $\mathcal{A}_{0,1}^{\Z}(\U)=0$ (this is the case when G is abelian or finite), then $\mathcal{A}_{n,1}^{\Z}(\U)=0$ for all $n \geq 1$. 

\end{rmk}

Here is a table showing the relations between various covers of $\U$.

\begin{scriptsize}

\begin{table}[ht]
\caption{higher-order covers}
\centering
\begin{tabular}{c c c c c}

\hline
 & covers & maps & $\pi_1$ & Deck group \\ [0.5ex]
\hline
universal cover & $\U_0$ & $G\rightarrow G$ & 0 & $G$ \\
 & \vdots &\vdots &\vdots &\vdots \\
nth order cover & $\U_{\Gamma_n}$ & $G \rightarrow \Gamma_n$ & $G_r^{(n+1)}$ & $ \Gamma_n$ \\
 & $\U_{\Gamma_{n-1}}$ & $G\rightarrow \Gamma_{n-1}$ & $G_r^{(n)}$ & $ \Gamma_{n-1}$ \\
 & \vdots &\vdots &\vdots &\vdots \\
universal abelian cover & $\U_{\Gamma_0}$ & $G\rightarrow \Gamma_0$ & $G'$ & $ H_1(\U)=\Z ^r$ \\

infinite cyclic cover & $\widetilde{\U}$ & $G\xrightarrow{\psi} \Z$ & $\text{ker}(\psi)$ & $ \Z$ \\
 & $\U$ & $G\rightarrow 0 $ & $G$ & $ 0$ \\

 \hline

\end{tabular}

\end{table}

\end{scriptsize}

From the table, note that $$\mathcal{A}_{n,1}^{\Z}(\U)=H_1(\U_{\Gamma_n};\Z)=H_1(\U;\Z \Gamma_n)=\frac{G_r^{(n+1)}}{[G_r^{(n+1)},G_r^{(n+1)}]},$$ so $\mathcal{A}_{n,1}^{\Z}(\U)$ depends on the fundamental group $G$ only, whereas the higher-dimensional $\mathcal{A}_{n,i}^{\Z}(\U)$ also depends on the cell-structure of the cover $\U_{\Gamma_n}$.

\begin{rmk}\rm (\cite{HP}, remark 3.9)
If $V$ is irreducible, then $\delta _{0,i}(\U)$ is the degree of the $i$-th Alexander polynomial of $V$.

\end{rmk}

From the table, note that if $V$ is irreducible, $\U_{\Gamma_0}=\widetilde{\U}$ is the infinite cyclic cover of $\U$. Since $\bar{\Gamma}_0\cong 0$ and $\mathbb{K}_0 \cong \Q$, $\delta _{0,i}(\U)=\text{rk} H_i(\U;\mathbb{K}_0[t^{\pm 1}])=\text{rk} H_i(\U;\Q[t^{\pm 1}])$.

\begin{prop}(Generalization of [22], proposition 5.1)
Let $V$ be an irreducible hypersurface in $\C ^{m+1}$. Let $\U=\C ^{m+1}-V$ and $\widetilde{\U}$ be the infinite cyclic cover of $\U$. Let $G=\pi_1(\U)$ and denote $\Delta_i(t)=\text{order}H_i(\widetilde{\U};\Q)$ the $i$-th Alexander polynomial of the hypersurface complement.
If $\Delta_1(t)=1$, then \begin{align*}
\delta_{n,1}(\U)&=0\text{ for all }n,\\
\delta_{n,i}(\U)&=\text{deg}\Delta_i(t)\text{ for all }n\text{ and all }i.
\end{align*}

Moreover, $\mathcal{A}_{n,i}^{\Z}(\U)\cong \mathcal{A}_{0,i}^{\Z}(\U)$ as $\Z[G/G']$-module for all $n$.

\end{prop}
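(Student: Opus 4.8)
The plan is to show that the hypothesis $\Delta_1(t)=1$ forces the rational derived series of $G$ to stabilize immediately after the term $G_r^{(1)}$, so that every higher-order cover $\U_{\Gamma_n}$ collapses onto the infinite cyclic cover $\widetilde\U$ and each higher-order invariant reduces to its classical counterpart. First I would rephrase the hypothesis: since $\U$ has the homotopy type of a finite CW complex, $H_1(\widetilde\U;\Q)$ is a finitely generated module over the PID $\Qt$, and $\Delta_1(t)$ is by definition its order; having order a unit forces $H_1(\widetilde\U;\Q)=0$. (This also follows from $\beta_1(\U)=1$, valid because $V$ is irreducible, via Proposition \ref{a1}, or from Corollary \ref{finite}.) In particular $H_1(\widetilde\U;\Z)$ is $\Z$-torsion, hence $H_1(\widetilde\U;\Z)/\{\Z\text{-torsion}\}=0$.

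Next I would prove $G_r^{(n)}=G_r^{(1)}$ for all $n\ge1$. Because $V$ is irreducible, $H_1(\U)\cong\Z$ (so $\Gamma_0=G/G_r^{(1)}=G/G'\cong\Z$) and the universal abelian cover $\U_{\Gamma_0}$ is precisely the infinite cyclic cover $\widetilde\U$; combining this with the identification $G_r^{(n)}/G_r^{(n+1)}\cong H_1(\U_{\Gamma_{n-1}};\Z)/\{\Z\text{-torsion}\}$ recalled in Section 2, the case $n=1$ gives
$$G_r^{(1)}/G_r^{(2)}\;\cong\;H_1(\widetilde\U;\Z)/\{\Z\text{-torsion}\}\;=\;0,$$
so $G_r^{(2)}=G_r^{(1)}$. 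Since the next term of the rational derived series depends only on the preceding one through the same defining recipe, an immediate induction yields $G_r^{(n)}=G_r^{(1)}$ for all $n\ge1$ (equivalently, once $G_r^{(n)}=G_r^{(1)}$ the cover $\U_{\Gamma_{n-1}}$ is again $\widetilde\U$ and the displayed identification forces $G_r^{(n)}/G_r^{(n+1)}=0$). Therefore $\Gamma_n=G/G_r^{(n+1)}=G/G_r^{(1)}\cong\Z$ for every $n\ge0$; in particular $\bar\Gamma_n=\ker\bar\psi=0$, whence $\K=\Q$ and $R_n\cong\Kt=\Qt$, and the cover $\U_{\Gamma_n}$ is literally $\widetilde\U$ with deck group $\Z$.

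The three assertions now follow by unwinding the definitions. Using flatness of $\Qt$ over $\Z[t^{\pm1}]$,
$$\mathcal{A}_{n,i}(\U)=H_i(\U;R_n)=H_i(\U;\Qt)=H_i(\widetilde\U;\Q),\qquad \mathcal{A}_{n,i}^{\Z}(\U)=H_i(\U;\Z\Gamma_n)=H_i(\widetilde\U;\Z),$$
for all $n$, and the right $\Z\Gamma_n$-module structure on the latter is the same as the $\Z\Gamma_0=\Z[G/G']$-module structure on $\mathcal{A}_{0,i}^{\Z}(\U)$, since the underlying cover and the deck action coincide; this is the ``Moreover'' statement. Passing to ranks gives $\delta_{n,i}(\U)=\mathrm{rk}_{\K}\mathcal{A}_{n,i}(\U)=\dim_{\Q}H_i(\widetilde\U;\Q)=\deg\Delta_i(t)$, and in particular $\delta_{n,1}(\U)=\deg\Delta_1(t)=0$.

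The genuinely delicate point is the middle step: identifying $\U_{\Gamma_0}$ with the infinite cyclic cover for irreducible $V$ and applying the homological interpretation of $G_r^{(n)}/G_r^{(n+1)}$ to the correct cover so that the induction closes — everything else is formal manipulation with the definitions of $\K$, $R_n$, and the higher-order Alexander modules. A minor caveat concerns $i=m+1$, where $H_{m+1}(\widetilde\U;\Q)$ need not be torsion, so $\delta_{n,m+1}(\U)$ and $\deg\Delta_{m+1}(t)$ should be read in the extended sense; for $i\le m$ these are finite by Corollary \ref{finite}.
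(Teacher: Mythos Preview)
Your proof is correct and follows essentially the same approach as the paper: use irreducibility to get $H_1(\U)\cong\Z$ so that $\U_{\Gamma_0}=\widetilde\U$, use $\Delta_1(t)=1$ to conclude $G_r^{(1)}/G_r^{(2)}=0$, and then induct to obtain $\Gamma_n=\Gamma_0\cong\Z$ for all $n$, after which the conclusions are immediate. Your write-up is somewhat more detailed than the paper's (in particular you explicitly justify the ``Moreover'' clause and remark on the $i=m+1$ case), but the argument is the same.
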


\begin{proof}
Since $V$ is irreducible, $H_1(\U)=\Z$. So $\bar{\Gamma}_0=0$ and $\mathbb{K}_0=\Z(\Z-0)^{-1}=\Q$.
$$\delta_{0,i}(\U)=\text{rank}_{\mathbb{K}_0}H_i(\U;R_0)=\text{rank}_{\mathbb{K}_0}H_i(\widetilde{\U};\mathbb{K}_0)=\text{rank}_{\Q}H_i(\widetilde{\U};\Q)=\text{deg}\Delta_i(t).$$
Now since $\Delta_1(t)=1$, $H_1(\widetilde{\U};\Q)=G'/G''\otimes\Q=0$. i.e. $G'/G''$ is a torsion abelian group. So
$$G_r'/G_r'' \cong(G_r'/[G_r',G_r'])/\{\Z \text{-torsion} \} \cong (G'/G'')/\{\Z \text{-torsion} \} \cong 0.$$
Hence $G_r''\cong G_r'=G'$, by induction, $G_r^{(n+1)}=G'$ for all $n\geq 0$. So $\Gamma_n =G/G_r^{(n+1)}=G/G'=\Gamma_0$ and $\mathbb{K}_n=\mathbb{K}_0$ for all $n$. 
Therefore, $\delta_{n,i}(\U)=\delta_{0,i}(\U)=\text{deg}\Delta_i(t)$ for all $n$ and all $i$. In particular,  $\delta_{n,1}(\U)=\text{deg}\Delta_1(t)=0$.

\end{proof}

Assume that $s$ is the dimension of $\text{Sing} V$, the singularities of $V$, so $0 \leq s\leq m-1$. If $s < m-1$, then $V$ is irreducible. Furthermore, Libgober showed that:

\begin{lem} (\cite{HG}, lemma 1.5, \cite{Li1})

If $s<m-1$, $\pi _1(\U)=\Z$ and $\pi_i(\U)=0$ for $2\leq i<m-s$. 
\end{lem}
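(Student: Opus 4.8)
The statement to prove is Libgober's Lemma: if $s<m-1$ (so $V$ has singularities of dimension at most $m-2$), then $\pi_1(\U)=\Z$ and $\pi_i(\U)=0$ for $2\leq i<m-s$. The plan is to reduce to a Lefschetz-type transversality argument. First I would invoke the fact that $\U=\C^{m+1}\setminus V_a$ is, up to homotopy, the complement of the affine hypersurface $V_a$, which by the argument already used in the paper is a Stein manifold of complex dimension $m+1$; this gives a finite CW structure of real dimension $m+1$ and will serve as the outer bound on the relevant skeleton. Then I would take a generic linear subspace $L\cong\C^{m-s}$ of $\C^{m+1}$: by genericity of $L$ with respect to the Whitney stratification of $V_a$, $L$ meets $\mathrm{Sing}\, V_a$ (dimension $\leq s$) in the empty set and meets the smooth part of $V_a$ transversally, so $L\cap V_a$ is a smooth affine hypersurface in $L$. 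The key input is a Zariski–Lefschetz hyperplane theorem (in the form due to Hamm–Lê, or as stated in Dimca's book): the inclusion $L\setminus(L\cap V_a)\hookrightarrow \C^{m+1}\setminus V_a=\U$ is an isomorphism on $\pi_j$ for $j<m-s$ and an epimorphism on $\pi_{m-s}$.

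Second, I would analyze the model space $L\setminus(L\cap V_a)$, the complement in $\C^{m-s}$ of a \emph{smooth} affine hypersurface. Here one uses that a smooth affine hypersurface complement in $\C^{N}$ (with $N=m-s\geq 2$) is homotopy equivalent to a space obtained from $\C^\ast\simeq S^1$ by attaching cells of dimension $\geq N$ — more precisely, by the theory of the Milnor fibration at infinity / the fact that $\C^N\setminus\{g=0\}$ for $g$ smooth fibers over $\C^\ast$ with smooth affine fiber of complex dimension $N-1$, one gets $\pi_1=\Z$ (generated by the meridian) and $\pi_j=0$ for $2\leq j\leq N-1$. Combining this with the Zariski–Lefschetz isomorphisms of the previous paragraph transports the conclusion to $\U$: $\pi_1(\U)=\Z$ and $\pi_j(\U)=0$ for $2\leq j<m-s$. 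I would also check the meridian generator of $\pi_1$ maps correctly, i.e. that $H_1(\U)=\Z$ is consistent with the formula $\gamma_\infty+\sum d_i\gamma_i=0$ when $r=1$.

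The main obstacle, and the place where care is needed, is justifying that the generic-section space really is the complement of a \emph{smooth} hypersurface and that one may apply the affine Zariski–Lefschetz theorem in the stated range $j<m-s$ rather than the classical range $j<m+1$. The point is that the defect $s$ in the singular locus is exactly what forces a generic $(m-s)$-plane to miss the singularities entirely, so the Lefschetz theorem applies with the section dimension $m-s$ rather than dimension $1$; one must cite the correct statement (Hamm–Lê's generalization of the Lefschetz hyperplane theorem to hypersurface complements, allowing the singular locus to have positive dimension, as in Dimca's \emph{Singularities and Topology of Hypersurfaces}). The rest — the homotopy type of a smooth affine hypersurface complement, and the bookkeeping with meridians — is standard and can be quoted. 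In the paper this is simply cited to Libgober, so the "proof" is really a reference; I would present it as such, recording the two ingredients (generic section misses $\mathrm{Sing}\,V$ since $s<m-1$, plus the Zariski–Lefschetz theorem, plus the known connectivity of a smooth hypersurface complement) and pointing to \cite{Li1}.
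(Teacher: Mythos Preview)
The paper gives no proof of this lemma; it is simply cited to Libgober \cite{Li1}, \cite{HG}, and you correctly note this at the end of your proposal. Your outline is in fact the argument Libgober uses: a generic linear section $L\cong\C^{m-s}$ has codimension $s+1$ and therefore misses $\mathrm{Sing}\,V_a$ entirely, so $L\cap V_a$ is smooth and irreducible; the Zariski--Lefschetz theorem (Hamm--L\^e) then gives $\pi_j(L\setminus L\cap V_a)\cong\pi_j(\U)$ for $j<m-s$, reducing the problem to the smooth case.

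One genuine gap in your sketch is the justification of the smooth case. The assertion that ``$\C^N\setminus\{g=0\}$ for $g$ smooth fibers over $\C^\ast$ with smooth affine fiber'' is not true for an arbitrary polynomial with smooth zero locus, and even when such a fibration exists, a smooth affine fiber of dimension $N-1$ need not be $(N-2)$-connected. The clean argument (which is what Libgober actually invokes) is different: for a smooth degree-$d$ hypersurface $W\subset\mathbb{P}^N$ transversal to $H$, the affine Milnor fiber $\{F=1\}\subset\C^{N+1}$ of the homogeneous defining polynomial $F$ is a $d$-fold cover of $\mathbb{P}^N\setminus W$ and, since $F$ has an isolated singularity at the origin, is homotopy equivalent to a bouquet of $N$-spheres by Milnor's theorem. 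From this one reads off $\pi_1=\Z$ and $\pi_j=0$ for $2\le j\le N-1$ for the affine complement $\C^N\setminus W_a$. If you replace your fibration sentence with this, or simply cite the smooth case as input from \cite{Li1} or \cite{DB}, the proposal matches the reference.
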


If $m\geq 2$ and $s<m-1$, then $\pi_1(\U)=\Z$, so all of the covers of $\U$ coincide. Thus all the higher-order Alexander modules are just the usual Alexander modules. In this case, $\mathcal{A}_{n,i}^{\Z}(\U)=H_i(\U_0;\Z)$ and the Alexander polynomial $\Delta_1(t)=1$, so the higher-order degrees $\delta_{n,1}(\U)=0$ and $\delta_{n,i}(\U)=\text{degree of }\Delta_i(t)$.

As already mentioned, Alexander invariants of the infinite cyclic over are well-studied, so we focus on hypersurfaces with codimension 1 singularities.

\section{Stratification of the Hypersurface and Local Link Pairs}

Consider a Whitney stratification $\mathcal{S}$ of $V$, which turns $V$ into a stratified space, more precisely, a pseudomanifold. Recall that there is such a stratification where strata are pure dimensional locally closed algebraic subsets with a finite number of irreducible nonsingular components. Choose a generic hyperplane $H$ so that it is transveral to all of the strata of $V$. Then we get an induced stratification for $V\cup H$, with the strata of the form:
$$\{ X-X\cap H, X\cap H, H-V\cap H \ | \ X\in \mathcal{S}\}$$

Let $X_i^k$ be the $i$-th $k$ dimensional stratum of $V$ and let $X^k=\cup X_i^k$. For instance, if $V$ has only isolated singularities, then $X^m$ is the smooth part of $V$ and $X_i^0$ is the $i$-th singular point, while $X^k = X^0$ for $k < m$.

There is a link pair associated to each stratum $X_i^k$. Precisely, at each point of $X_i^k$, consider a small sphere $S^{2m-2k+1}$ which is the boundary of a small disk $D^{m-k+1}$ transversal to $X_i^k$ with codimension $k$ in $\C^{m+1}$. Denote $K^{2m-2k-1}: = S^{2m-2k+1} \cap V$. It is known that the link pair $(S^{2m-2k+1}, K^{2m-2k-1})$ associated to two different points on the same stratum are homeomorphic. Furthermore, these link pairs have fibered complements, by the Milnor fibration theorem.

\subsection{Local topological structure of the hypersurface complement}

In order to relate the higher-order degree $\delta_{n,i}(\U)$ and the local singularities, we look into the local topological structure of the hypersurface complement. We first reduce the problem to the study of the boundary, $X$, of a regular neighborhood of $V_a$ in $\C ^{m+1}$.
Let $N(V)$ be a small tubular neighborhood of $V$ in $\CP$ and note that, due to the transversality assumption, the complement $N(V)\setminus (V\cup H)$ can be identified with $N(V_a)\setminus V_a$, where $N(V_a)$ is a regular neighborhood of $V_a$ in $\C ^{m+1}$. Moreover $N(V)\setminus (V\cup H)$ retracts by deformation on \linebreak
$X=\partial N(V)$.

There exists a generic smooth hypersurface $L$ in $N(V)$ of dimension $m$ and of degree $d$ such that $L$ is transversal to $V\cup H$ (as in the proof of \cite{HG} theorem 4.3). By Lefschetz hyperplane section theorem (in \cite{DB}, \cite{RF}), it follows that the composition map below is an isomorphism for $i< m$ and an epimorphism for $i=m$.

$$\xymatrix{
   \pi_i(L\setminus L\cap ( V \cup H)) \ar[r] \ar[d]   &   \pi _i(\CP\setminus V \cup H) =\pi_i(\U)    \\
   \pi _i(N(V)\setminus V \cup H) \ar[ru] &   \\  
                     }$$
(the argument used here is similar to the one used in the proof of theorem 3.2 of \cite{L2})
                    
We get $\pi_i(X)\xrightarrow{\cong} \pi_i(\U)$ for $i<m$ and 
$\pi_m(X)\twoheadrightarrow \pi_m(\U)$. 
Hence $\pi_i(\U,X)=0$ for $i\leq m$. So $\U$ has the homotopy type of a complex obtained from $X$ by adding cells of dimension $\geq m+1$. The same is true for any cover, in particular for the $\Gamma_n$-covers. Since $\pi_1(X)\cong \pi_1(\U)$, all the $\Gamma_n$-covers of $X$ and $\U$ are compatible. Hence the $\Z\Gamma_n$- homomorphism $H_i(X_{\Gamma_n})\rightarrow H_i(\U_{\Gamma_n})$ is an isomorphism for $i<m$ and an epimorphism for $i=m$. 

Choose a splitting $\xi:\Z\rightarrow \Gamma_n$ to identify $R_n$ with $\Kt$ and since $\Kt$ is flat over $\Z\Gamma_n$, get
$H_i(X;\mathbb{K}_n[t^{\pm 1}])\xrightarrow{\cong} H_i(\U;\mathbb{K}_n[t^{\pm 1}])$ for $i<m$ and $H_m(X;\mathbb{K}_n[t^{\pm 1}])\twoheadrightarrow H_m(\U;\mathbb{K}_n[t^{\pm 1}])$ as $\Kt$-modules. Therefore, we have:
$$\delta_{n,i}(\U)=\text{rk}_{\mathbb{K}_n}H_i(\U;\mathbb{K}_n[t^{\pm 1}])\leq \text{rk}_{\mathbb{K}_n}H_i(X;\mathbb{K}_n[t^{\pm 1}])= \delta_{n,i}(X)$$ for all $i\leq m$. 
Hence, it is sufficient to bound above $\delta_{n,i}(X)$.

The stratification of $V_a$ with strata $X_j^k$ yields the partition of $X$ into a union of subsets $Y_j^k$. Each subset $Y_j^k$ fibers over the corresponding stratum $X_j^k$ of $V_a$, with fiber homotopy equivalent to the link complement of the stratum $X_j^k$. (The argument here is similar to the one used in \cite{EM}.)
\begin{align*}
S^{2m-2k+1}\setminus K^{2m-2k-1} \hookrightarrow   & Y_j^k \\
&\downarrow \\
 & X_j^k 
\end{align*}

\begin{rmk}\rm

1) $Y_{j_1}^k\cap Y_{j_2}^k $ is empty if $j_1\neq j_2$.

2) For $k_1< k_2$, $Y_{j_1}^{k_1}\cap Y_{j_2}^{k_2}  \neq \varnothing$ if and only if $\overline{X_{j_1}^{k_1}}\subseteq \overline{X_{j_2}^{k_2}}$.

If $k_1< k_2$, we also have a restricted fibration:
\begin{align*}
S^{2m-2{k_2}+1}\setminus K^{2m-2{k_2}-1} \hookrightarrow     Y_{j_1}^{k_1}& \cap Y_{j_2}^{k_2} \\
&\downarrow\\
&Z_{j_2}^{k_2}
\end{align*}

where $Z_{j_2}^{k_2}$ is an open subset of $X_{j_2}^{k_2}$.
\end{rmk}

\subsection{Definition of local homologies with $R_n$ coefficients and local higher-order Alexander modules}

For any subspace $M$ of $X$, we have an inclusion map $$i:M\hookrightarrow X.$$ It induces $$\pi_1(M)\xrightarrow{i_{\sharp}}  \pi_1(X),$$ and the higher-order cover $M_{\Gamma_n}$ is the cover corresponding to the kernel of $$\pi_1(M)\xrightarrow{i_{\sharp}}  \pi_1(X)\xrightarrow{\phi}\Gamma_n.$$ Define local homologies with $R_n$ coefficient as follows:

 $$H^*(M;R_n)=H_*(\text{Hom}_{\Z\Gamma_n}(C_*(M_{\Gamma_n};\Z),R_n))$$ as a left $R_n$-module, and 
 
 $$H_*(M;R_n)=H_*(C_*(M_{\Gamma_n};\Z)\otimes_{ \Z\Gamma_n}R_n)$$ as a right $R_n$-module.

In the mean while, $M$ has its own higher-order Alexander modules. Let $$\alpha_n :=\pi_1(M)/{\pi_1(M)}_r^{(n+1)},$$ we have an $\alpha_n$-cover $M_{\alpha_n}$ corresponding to $\pi_1(M) \rightarrow \alpha_n$. The map $\pi_1(M)\xrightarrow{i_{\sharp}}  \pi_1(X)$ induce a map $\alpha_n \xrightarrow{g} \Gamma_n$. Next, the linking number homomorphism guarantees that the composition $\alpha_n \xrightarrow{g} \Gamma_n \xrightarrow{\psi} \Z$ is still surjective. Let $\bar{\alpha}_n:=\text{ker}(\psi \circ g)$ and $r_n:=\Z\alpha_n(\Z\bar{\alpha}_n\setminus \{0\})^{-1}$. We can identify $r_n$ with $k_n[t^{\pm 1}]$ after choosing a splitting $\xi:\Z \rightarrow \alpha_n$. Furthermore, this splitting can be chosen to be compatible with the global one ($\xi:\Z \rightarrow \alpha_n \rightarrow \Gamma_n$).
Define $$\mathcal{A}_{n,s}(M)=H_s(M;r_n)=H_*(C_*(M_{\alpha_n};\Z)\otimes_{ \Z\alpha_n}r_n)$$ as a right $r_n$-module.

In fact, there is a relation between the covers $M_{\Gamma_n}$ and $M_{\alpha_n}$: the homomorphisms $$\pi_1(M) \twoheadrightarrow \alpha_n \xrightarrow{g} \Gamma_n \rightarrow \Z$$ yield the induced maps of covers $\sqcup M_{\alpha_n}\rightarrow M_{\Gamma_n} \rightarrow \widetilde{M} \rightarrow M$, where the number of disjoint copies of $M_{\alpha_n}$ is $[\Gamma_n : \text{im}(g)]$. Note that $\pi_1(M) \xrightarrow{\phi\circ i_{\sharp}} \Gamma_n $ might not be surjective, so \linebreak
$M_{\Gamma_n}=\sqcup M_{\text{im}(\phi\circ i_{\sharp})} $.

Here we introduce some lemmas which will help us to bound $\delta_{n,i}(X)$ above.

\begin{lem}(\cite{Di2})
Since $X=\bigcup\limits_{j,k}Y_j^k$, there is a Mayer-Vietoris spectral sequence with $R_n$ coefficients, 
$$E_1^{p,q}:=\bigoplus\limits_{k_0<...<k_p} H^q(Y_{j_0}^{k_0}\cap ...\cap Y_{j_p}^{k_p};R_n)\Longrightarrow H^{p+q}(X;R_n).$$

\end{lem}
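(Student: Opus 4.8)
The plan is to build the spectral sequence from the open cover $X = \bigcup_{j,k} Y_j^k$ in the same way one constructs the classical Mayer–Vietoris (or Čech-to-derived-functor) spectral sequence, but keeping track of the local coefficient systems coming from the higher-order covers. First I would observe that, since $\pi_1(X) \cong \pi_1(\U)$ and all the subsets $Y_j^k$ (and their finite intersections) are locally closed and reasonably nice — they fiber over strata of $V_a$ with link-complement fibers, as recorded above — this is a finite open cover of a space with the homotopy type of a finite CW complex. For each simplex of the nerve, i.e. each nonempty intersection $Y_{j_0}^{k_0}\cap\cdots\cap Y_{j_p}^{k_p}$, the inclusion into $X$ induces, after composing $\pi_1 \to \pi_1(X)\xrightarrow{\phi}\Gamma_n$, a well-defined $\Gamma_n$-cover of that intersection; this is exactly the local coefficient setup described in Section 6.2, so $H^q(Y_{j_0}^{k_0}\cap\cdots\cap Y_{j_p}^{k_p};R_n)$ makes sense as a left $R_n$-module.

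Next I would assemble the double complex. Working at the level of $\Gamma_n$-covers, the cover of $X$ pulls back to a $\Gamma_n$-equivariant open cover of $X_{\Gamma_n}$ by the preimages $p_n^{-1}(Y_j^k)$. The ordinary Čech–singular double complex of this equivariant cover, with the $\Z\Gamma_n$-module structure induced by the deck action, computes $H_*(X_{\Gamma_n};\Z)$; applying $\mathrm{Hom}_{\Z\Gamma_n}(-,R_n)$ (or, dually, $-\otimes_{\Z\Gamma_n}R_n$ if one wants the homological version) and using that $R_n$ is a $\Z\Gamma_n$-bimodule yields a first-quadrant double complex whose total cohomology is $H^{*}(X;R_n)$. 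Filtering by the Čech degree $p$ gives a spectral sequence whose $E_1$ page is the $\mathrm{Hom}$-complex of the $q$-th cohomology of the pieces, i.e. exactly $E_1^{p,q}=\bigoplus_{k_0<\cdots<k_p} H^q(Y_{j_0}^{k_0}\cap\cdots\cap Y_{j_p}^{k_p};R_n)$, converging to $H^{p+q}(X;R_n)$. The indexing $k_0<\cdots<k_p$ (rather than an unordered index set) is forced by Remark 6.1: intersections of two $Y$'s in the same dimension $k$ are empty, so only strictly increasing chains of dimensions contribute, and each such chain determines its strata $j_i$.

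The step I expect to be the main obstacle is the bookkeeping of coefficients rather than any deep input: one must check that the Čech differentials are genuinely $R_n$-linear maps between the local cohomology groups, which requires that for each inclusion of intersections the induced map on fundamental groups is compatible with the maps to $\Gamma_n$ (so the restriction maps on covers, hence on $R_n$-cohomology, are defined) — this is precisely the compatibility of the covers $M_{\Gamma_n}$ discussed just before the lemma, together with the fact that all the splittings $\xi:\Z\to\alpha_n$ can be chosen compatibly. A secondary technical point is passing from the open cover to a good-enough model (replacing the $Y_j^k$ by an open cover that is closed under intersection, or using a CW/triangulated model compatible with the stratification, as in the reference \cite{Di2}) so that singular cohomology computes the Čech answer; this is standard for the kind of stratified spaces at hand. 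Convergence is automatic since the cover is finite and $X$ is finite-dimensional, so the spectral sequence has only finitely many nonzero columns and rows.
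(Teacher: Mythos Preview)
The paper does not give its own proof of this lemma: it simply cites it from Dimca's \emph{Sheaves in Topology} (\cite{Di2}) as a known instance of the Mayer--Vietoris spectral sequence. Your proposal is a correct outline of the standard \v{C}ech--double-complex construction adapted to local coefficients, which is essentially what that reference provides, so in spirit you are doing exactly what the paper defers to.

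One point worth flagging: the paper describes the $Y_j^k$ as a \emph{partition} of $X$ (arising from the stratification of $V_a$), not as an open cover, and Remark~6.1 records which pairwise intersections are nonempty. You correctly identify this as the ``secondary technical point'' --- to run the \v{C}ech argument one either thickens the $Y_j^k$ to a compatible open cover, or works with the closed cover by the $\overline{Y_j^k}$ and the Mayer--Vietoris spectral sequence for closed covers (both are standard and covered in \cite{Di2}). Since the paper is content to cite the result, your sketch is more detailed than what the paper itself offers; there is no genuine gap.
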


\begin{lem}(\cite{Di2})
The Leray spectral spectral sequence for the previous fibration with  $R_n$ coefficients yields: 
$$E_2^{a,b}:=H^a(Z_{j_p}^{k_p};\mathcal{H}^b(S^{2m-2{k_p}+1}\setminus K^{2m-2{k_p}-1};R_n))\Longrightarrow H^{a+b}(Y_{j_0}^{k_0}\cap ...\cap  Y_{j_p}^{k_p};R_n).$$
for $0\leq k_0<...<k_p \leq m$, where in the $E_2$-term, $\mathcal{H}^b(-;R_n)$ is a local system of $R_n$-modules, with stalk $H^b(-;R_n)$.

\end{lem}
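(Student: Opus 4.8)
The plan is to recognize cohomology with $R_n$-coefficients as sheaf cohomology with values in a locally constant sheaf of $R_n$-modules, and then apply the Leray spectral sequence of a continuous map, which needs no commutativity of the coefficient ring. Write $E:=Y_{j_0}^{k_0}\cap\cdots\cap Y_{j_p}^{k_p}$, $B:=Z_{j_p}^{k_p}$, $F:=S^{2m-2k_p+1}\setminus K^{2m-2k_p-1}$, and let $\pi\colon E\to B$ be the restricted fibration recorded above. To any space $M$ equipped with a map $j\colon M\to X$ I would attach the locally constant sheaf $\mathcal{R}_n^M$ whose monodromy is the composite $\pi_1(M)\xrightarrow{j_\sharp}\pi_1(X)\xrightarrow{\phi}\Gamma_n\hookrightarrow R_n$ acting on $R_n$ by left multiplication. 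Because left multiplication by a unit of $\Z\Gamma_n$ commutes with right multiplication, $\mathcal{R}_n^M$ is a sheaf of right $R_n$-modules, and the standard dictionary between locally constant sheaves and $\pi_1$-modules, together with $C_*(M_{\Gamma_n};\Z)\cong C_*(\widetilde{M};\Z)\otimes_{\Z\pi_1(M)}\Z\Gamma_n$, gives $H^*(M;\mathcal{R}_n^M)\cong H^*(M;R_n)$ in the sense defined above; here one uses that all the spaces in question have the homotopy type of finite CW complexes.

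With this dictionary in hand I would apply the Leray spectral sequence of $\pi\colon E\to B$ to the sheaf $\mathcal{R}_n^E$ (\cite{Di2}), obtaining $E_2^{a,b}=H^a(B;R^b\pi_*\mathcal{R}_n^E)\Longrightarrow H^{a+b}(E;\mathcal{R}_n^E)=H^{a+b}(E;R_n)$. The substance of the lemma is the identification of the higher direct images. Since $\pi$ is a locally trivial bundle with fiber $F$, working over a contractible chart of $B$ (equivalently, by proper base change) shows that $R^b\pi_*\mathcal{R}_n^E$ is locally constant with stalk $H^b\bigl(F;\mathcal{R}_n^E|_F\bigr)$, and $\mathcal{R}_n^E|_F$ is precisely $\mathcal{R}_n^F$ for the composite inclusion $F\hookrightarrow E\hookrightarrow X$, so the stalk is $H^b(F;R_n)$. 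Writing $\mathcal{H}^b(S^{2m-2k_p+1}\setminus K^{2m-2k_p-1};R_n):=R^b\pi_*\mathcal{R}_n^E$ then yields the asserted $E_2$-page and abutment.

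The step I expect to be the main obstacle is checking that $R^b\pi_*\mathcal{R}_n^E$ is a local system of right $R_n$-modules, not merely of abelian groups, so that the terms $H^a(B;\mathcal{H}^b(-;R_n))$ are genuine $R_n$-cohomology. The monodromy of a loop $\gamma\subset B$ on the stalk $H^b(F;R_n)$ factors as $h_\gamma^*$, where $h_\gamma$ is a homotopy self-equivalence of $F$ obtained by parallel transport, post-composed with left multiplication by the holonomy element $g_\gamma\in\Gamma_n$ of the $\Gamma_n$-cover along a lift of $\gamma$ to $E_{\Gamma_n}$. Since the right $R_n$-module structure on $H^b(F;R_n)$ is induced by right multiplication on the coefficients, and left and right multiplication on the bimodule $R_n$ commute, this composite is an automorphism of right $R_n$-modules, as required. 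A secondary, purely bookkeeping point is that $\pi_1(F)\to\Gamma_n$ and $\pi_1(B)\to\Gamma_n$ need not be surjective, so the relevant covers are disjoint unions of copies indexed by cosets, exactly as in the observation that $M_{\Gamma_n}=\sqcup\,M_{\mathrm{im}(\phi\circ i_\sharp)}$; one checks by Frobenius reciprocity that the sheaves $\mathcal{R}_n^M$ and the spectral sequence above are insensitive to replacing $\Gamma_n$ by the image subgroup, so the stated formula is unaffected.
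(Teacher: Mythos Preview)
Your proposal is correct and in fact more detailed than what the paper offers: the paper simply records this lemma with a citation to \cite{Di2} and gives no proof at all, treating the Leray spectral sequence with twisted $R_n$-coefficients as a standard fact from sheaf theory. Your argument---identifying $H^*(-;R_n)$ with sheaf cohomology in a locally constant sheaf of $R_n$-bimodules, invoking the Leray spectral sequence of $\pi$, and then identifying $R^b\pi_*\mathcal{R}_n^E$ as a local system with the claimed stalks---is exactly the content behind that citation, and your care in verifying that the monodromy acts by right $R_n$-module automorphisms (because left and right multiplication on the bimodule $R_n$ commute) is the one genuinely non-commutative wrinkle that needs checking. The remark about possibly non-surjective $\pi_1(F)\to\Gamma_n$ is also well placed and handled correctly.
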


\begin{lem} 

$H_b(S^{2m-2{k}+1}\setminus K^{2m-2{k}-1};R_n)$ is a finite dimensional $\K$-vector space for \linebreak
$b\leq m-k$ and is trivial for $b\geq m-k+1$.
\end{lem}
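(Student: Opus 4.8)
The claim concerns the local link pair complement $S^{2m-2k+1}\setminus K^{2m-2k-1}$, which by the Milnor fibration theorem has the homotopy type of a finite CW complex and is a fibration over $S^1$ with fiber the Milnor fiber $F$. The plan is to exploit this fibered structure together with the fact that $K^{2m-2k-1}$ is a $(2m-2k-1)$-dimensional algebraic link in $S^{2m-2k+1}$, so its complement is an open $(2m-2k+1)$-manifold and hence has the homotopy type of a finite CW complex of real dimension at most $m-k+1$ (it is homotopy equivalent to the mapping torus of the Milnor fiber $F$, and $F$ is a Stein space of complex dimension $m-k$, hence an $(m-k)$-complex; the mapping torus is then an $(m-k+1)$-complex). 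From this, $H_b(S^{2m-2k+1}\setminus K^{2m-2k-1};R_n)=0$ immediately for $b\geq m-k+2$. To upgrade the vanishing to the stated range $b\geq m-k+1$, I would use that $S^{2m-2k+1}\setminus K^{2m-2k-1}$ is an open manifold, so it has no homology in its top dimension $2m-2k+1$; but more efficiently, since it deformation retracts to the mapping torus of $F$ and $F$ has the homotopy type of a wedge of $(m-k)$-spheres (by the Milnor fibration theorem, since $F$ is $(m-k-1)$-connected), the mapping torus has cells only in dimensions $0,1,m-k,m-k+1$, and the top cells in dimension $m-k+1$ are attached in a way governed by the monodromy. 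The key point is that the generator of $\pi_1$ (the meridian) maps nontrivially under $\phi$ to $\Gamma_n$ by the linking number homomorphism; combined with Proposition \ref{a2} this kills $H_0$, and one must show the top-dimensional homology with twisted coefficients vanishes as well.

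The cleanest route to the top-dimensional vanishing is a Poincar\'e–Lefschetz duality argument. Since $S^{2m-2k+1}\setminus K^{2m-2k-1}$ is homotopy equivalent to a compact manifold with boundary (namely the complement of an open tubular neighborhood of $K$, which is a compact $(2m-2k+1)$-manifold $W$ with boundary a circle bundle over $K$), Remark 4.2(2) gives $H_{m-k+1}(W;R_n)\cong H^{m-k}(W,\partial W;R_n)$. One then argues that the relevant cohomology vanishes, or alternatively uses the half-lives–half-dies principle for the middle-dimensional homology together with the fact that the $R_n$-coefficient Euler characteristic of the fibered complement is zero (as it fibers over $S^1$), to pin down $H_{m-k+1}$. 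Actually, the most direct approach: the complement fibers over $S^1$ with finite-CW fiber $F$ of dimension $m-k$, so the Wang sequence
$$\cdots \to H_b(F;\mathbb{K}_n) \xrightarrow{t_*-1} H_b(F;\mathbb{K}_n) \to H_b(S^{2m-2k+1}\setminus K^{2m-2k-1};R_n) \to H_{b-1}(F;\mathbb{K}_n)\to\cdots$$
relates the twisted homology of the complement to that of $F$, where the $\mathbb{K}_n$ coefficients come from restricting to $\bar\Gamma_n=\ker(\bar\psi)$; here $\pi_1(F)$ maps to $\bar\Gamma_n$. Since $F$ is an $(m-k)$-dimensional finite complex, $H_b(F;\mathbb{K}_n)=0$ for $b>m-k$, so the Wang sequence gives $H_b(\cdot;R_n)=0$ for $b\geq m-k+2$ at once, and for $b=m-k+1$ one has $H_{m-k+1}(\cdot;R_n)\cong \ker(t_*-1)$ on $H_{m-k}(F;\mathbb{K}_n)$; this must be shown to vanish, i.e.\ $t_*-1$ is injective on the top homology of the Milnor fiber with $\mathbb{K}_n$ coefficients.

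For the finite-dimensionality in the range $b\leq m-k$: since $F$ has the homotopy type of a finite CW complex, $H_b(F;\mathbb{Z}\bar\Gamma_n)$ is a finitely generated $\mathbb{Z}\bar\Gamma_n$-module, hence $H_b(F;\mathbb{K}_n)=H_b(F;\mathbb{Z}\bar\Gamma_n)\otimes_{\mathbb{Z}\bar\Gamma_n}\mathbb{K}_n$ is a finite-dimensional $\mathbb{K}_n$-vector space (flatness of $\mathbb{K}_n$ over $\mathbb{Z}\bar\Gamma_n$, Proposition 3.4). Feeding this into the Wang sequence and using that $R_n\cong\mathbb{K}_n[t^{\pm1}]$ is flat over $\mathbb{Z}\Gamma_n$, each $H_b(S^{2m-2k+1}\setminus K^{2m-2k-1};R_n)$ for $b\leq m-k$ sits in an exact sequence between two finite-dimensional $\mathbb{K}_n$-vector spaces, hence is itself finite-dimensional over $\mathbb{K}_n$. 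Note that a priori these are $R_n=\mathbb{K}_n[t^{\pm1}]$-modules; the content of the assertion is that as $\mathbb{K}_n$-vector spaces they are finite-dimensional, equivalently they are torsion $R_n$-modules, which is exactly what the Wang sequence delivers once one knows $t_*-1$ has finite-dimensional kernel and cokernel on $H_b(F;\mathbb{K}_n)$ — and this holds because $H_b(F;\mathbb{K}_n)$ is itself finite-dimensional. The main obstacle is the sharp vanishing at $b=m-k+1$: showing $t_*-1$ is injective on $H_{m-k}(F;\mathbb{K}_n)$, for which I would invoke that the monodromy has no eigenvalue $1$ in the appropriate twisted sense — this can be extracted from the fact that the Milnor fiber complement's infinite cyclic cover (here further covered by the $\bar\Gamma_n$-cover) has finite-dimensional homology, a consequence of the algebraicity of the link pair and Milnor's theorem, or from a direct duality argument showing $H_{m-k+1}$ is both a submodule (by freeness considerations) and torsion, hence zero.
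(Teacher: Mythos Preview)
Your approach can be salvaged but leaves a genuine gap at $b=m-k+1$, and it misses the one-line argument the paper actually uses. The paper observes that the $\Gamma_n$-cover of the link complement factors through its infinite cyclic cover, and that this infinite cyclic cover is homotopy equivalent to the Milnor fiber $F$ (it is the pullback of the Milnor fibration along $\mathbb{R}\to S^1$, hence $F\times\mathbb{R}\simeq F$). This gives directly
\[
H_b(S^{2m-2k+1}\setminus K^{2m-2k-1}; R_n)\;\cong\;H_b\bigl(\widetilde{S^{2m-2k+1}\setminus K^{2m-2k-1}}\,;\,\mathbb{K}_n\bigr)\;\cong\;H_b(F;\mathbb{K}_n).
\]
Since $F$ is a finite CW complex of dimension $m-k$, the right-hand side is a finite-dimensional $\mathbb{K}_n$-vector space for $b\le m-k$ and vanishes for $b\ge m-k+1$. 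That is the entire proof; no Wang sequence, duality, or monodromy-eigenvalue argument is needed.

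Your Wang sequence is miswritten: the restriction of $R_n$ to $\pi_1(F)\to\bar\Gamma_n$ is $R_n\cong\mathbb{K}_n[t^{\pm1}]$ as a $\mathbb{Z}\bar\Gamma_n$-module, not $\mathbb{K}_n$, so the fiber terms should be $H_b(F;\mathbb{K}_n)\otimes_{\mathbb{K}_n}\mathbb{K}_n[t^{\pm1}]$ and the map is $t\cdot h_*-1$. On a free $\mathbb{K}_n[t^{\pm1}]$-module this is automatically injective (degree one with unit leading coefficient), so the ``main obstacle'' you identify is an artifact of the wrong sequence; with the sequence as you wrote it you never actually prove the injectivity, and the appeals to ``monodromy has no eigenvalue $1$'' or to half-lives--half-dies remain hand-waving. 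In any case the corrected Wang calculation simply rederives the isomorphism displayed above. (Aside: $F$ is not in general a wedge of $(m-k)$-spheres here, since the normal slice to a stratum need not have an isolated singularity; only the dimension bound on $F$ is available, and that is all that is used.)
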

\begin{proof}

Notice that for each link pair, there is a local Milnor fibration (See \cite{SP}) and its pullback under the universal covering map $\R \rightarrow S^1$.

$$\xymatrix{
 F \ar[rd]   &   & F \ar[rd] &   \\
  & S^{2m-2{k}+1}\setminus K^{2m-2{k}-1}\ar[d]&& \widetilde{ S^{2m-2{k}+1}\setminus K^{2m-2{k}-1}}\ar[d] \\  
 & S^1&&\widetilde{S^1}\simeq \mathbb{R}\\ 
                     }$$
                 
where $F$ is the Milnor fiber. 
Since $\mathbb{R}$ is contractible, the second fibration is trivial. i.e. $\widetilde{ S^{2m-2{k}+1}\setminus K^{2m-2{k}-1}} \simeq F\times \mathbb{R}\simeq F$.
By \cite{SP}, the Milnor fiber $F$ has the homotopy type of a CW complex of dimension $m-k$, so $H_b(F;-)=0$ for all $b\geq m-k+1$. Also since the $\Gamma_n$-cover of $S^{2m-2{k}+1}\setminus K^{2m-2{k}-1}$ factors through the infinite cyclic cover, $$H_b(S^{2m-2{k}+1}\setminus K^{2m-2{k}-1};R_n)\cong H_b(\widetilde{ S^{2m-2{k}+1}\setminus K^{2m-2{k}-1}};\K)\cong H_b(F;\K),$$ and the latter is a finite dimensional $\K$-vector space for $b\leq m-k$ and it is $0$ for $b\geq m-k+1$.

\end{proof}

\begin{lem} (\cite{Ex}, \cite{Mu}) There exists a spectral sequence:
$$E_2^{s,l}:=\text{Ext}_{r_n}^l (\mathcal{A}_{n,s}(S^{2m-2{k_p}+1}\setminus K^{2m-2{k_p}-1}),R_n)\Longrightarrow H^{s+l}(S^{2m-2{k_p}+1}\setminus K^{2m-2{k_p}-1};R_n).$$

\end{lem}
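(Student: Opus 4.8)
The statement is a relative universal-coefficient spectral sequence, and the plan is to reduce it to the standard hyper-$\mathrm{Ext}$ spectral sequence of a bounded complex of projective $r_n$-modules. Write $M := S^{2m-2k_p+1}\setminus K^{2m-2k_p-1}$. First I would fix a finite CW structure on $M$ (available since, by the local Milnor fibration, $M$ fibers over $S^1$ with finite Milnor fiber), so that $C_* := C_*(M_{\alpha_n};\Z)$ is a bounded complex of finitely generated free right $\Z\alpha_n$-modules with $H_s(C_*) = H_s(M_{\alpha_n};\Z)$. From the tower of covers $\sqcup M_{\alpha_n}\to M_{\Gamma_n}\to M$ recorded above, the preimage in $M_{\Gamma_n}$ of each cell of $M$ is a single free $\Gamma_n$-orbit, whence a natural identification $C_*(M_{\Gamma_n};\Z)\cong C_*\otimes_{\Z\alpha_n}\Z\Gamma_n$ of complexes of right $\Z\Gamma_n$-modules. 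Tensor--hom adjunction then gives $\mathrm{Hom}_{\Z\Gamma_n}(C_*(M_{\Gamma_n};\Z),R_n)\cong \mathrm{Hom}_{\Z\alpha_n}(C_*,R_n)$, where on the right $R_n$ carries the $\Z\alpha_n$-module structure pulled back along $g\colon\alpha_n\to\Gamma_n$; hence $H^*(M;R_n)\cong H_*\big(\mathrm{Hom}_{\Z\alpha_n}(C_*,R_n)\big)$.

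Next I would push the coefficients from $\Z\alpha_n$ to $r_n$. Using the compatible splittings $\xi\colon\Z\to\alpha_n\to\Gamma_n$, the inclusion $\bar\alpha_n\hookrightarrow\bar\Gamma_n$ induces an embedding of skew fields $k_n\hookrightarrow\mathbb{K}_n$, hence a ring map $r_n\cong k_n[t^{\pm1}]\hookrightarrow \mathbb{K}_n[t^{\pm1}]\cong R_n$ making $R_n$ an $(r_n,R_n)$-bimodule; in particular the $\Z\alpha_n$-structure on $R_n$ above factors through the Ore localization $r_n=\Z\alpha_n(\Z\bar\alpha_n\setminus\{0\})^{-1}$. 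Since $C_*$ is finitely generated free over $\Z\alpha_n$, a check on $C_*=\Z\alpha_n$ gives $\mathrm{Hom}_{\Z\alpha_n}(C_*,R_n)\cong \mathrm{Hom}_{r_n}(D_*,R_n)$, where $D_* := C_*\otimes_{\Z\alpha_n}r_n = C_*(M_{\alpha_n};\Z)\otimes_{\Z\alpha_n}r_n$ is a bounded complex of finitely generated free right $r_n$-modules; by flatness of $r_n$ over $\Z\alpha_n$, $H_s(D_*) = H_s(M_{\alpha_n};\Z)\otimes_{\Z\alpha_n}r_n = H_s(M;r_n) = \mathcal{A}_{n,s}(M)$. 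Thus $H^*(M;R_n)\cong H_*\big(\mathrm{Hom}_{r_n}(D_*,R_n)\big)$ with $H_s(D_*) = \mathcal{A}_{n,s}(M)$.

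Finally I would invoke the standard universal-coefficient (hyper-$\mathrm{Ext}$) spectral sequence of the bounded complex $D_*$ of projective right $r_n$-modules with coefficients in the $r_n$-bimodule $R_n$ (see \cite{Ex}, \cite{Mu}): taking a Cartan--Eilenberg resolution of $D_*$ and applying $\mathrm{Hom}_{r_n}(-,R_n)$ yields a double complex one of whose two spectral sequences has $E_2^{s,l} = \mathrm{Ext}_{r_n}^{l}(H_s(D_*),R_n)$ and converges to $H_*(\mathrm{Hom}_{r_n}(D_*,R_n))$. Boundedness of $D_*$ guarantees convergence (only finitely many $E_2$-terms in each total degree), and substituting $H_s(D_*) = \mathcal{A}_{n,s}(M)$ and $H_*(\mathrm{Hom}_{r_n}(D_*,R_n))\cong H^*(M;R_n)$ gives exactly the asserted spectral sequence. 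One may further observe that $r_n\cong k_n[t^{\pm1}]$ is a noncommutative PID, so $E_2^{s,l}=0$ for $l\geq 2$ and the sequence collapses to the classical universal-coefficient short exact sequences; but only its existence is needed below.

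The main obstacle is the bookkeeping in the first two steps: carefully tracking the left/right module structures across the three tensor--hom identifications so that the $\mathrm{Ext}$ groups inherit the intended left $R_n$-module structure, and, in the second step, justifying that the splittings can be taken compatibly and that $\Z\bar\alpha_n\to\Z\bar\Gamma_n$ is injective, so that $R_n$ is genuinely an $r_n$-module with the $\Z\alpha_n$-action factoring through $r_n$. Once these structural points are settled, the production of the spectral sequence itself is entirely routine homological algebra.
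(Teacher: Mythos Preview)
The paper gives no proof of this lemma; it simply records the statement with citations to \cite{Ex} and \cite{Mu}, where universal-coefficient spectral sequences of this type are established. Your derivation via the hyper-$\mathrm{Ext}$ spectral sequence of the bounded complex $D_*=C_*(M_{\alpha_n};\Z)\otimes_{\Z\alpha_n}r_n$ of finitely generated projective $r_n$-modules is exactly the standard route, and the chain-level identifications you make (pulling the $\Gamma_n$-cover back along $M\hookrightarrow X$, then using tensor--hom adjunction and flatness of $r_n$ over $\Z\alpha_n$) are correct.

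Regarding the obstacle you flag: the paper simply \emph{assumes} in Section~7 that $g\colon\bar\alpha_n\to\bar\Gamma_n$ induces a ring map $g\colon k_n\to\mathbb{K}_n$ (hence $r_n\to R_n$), and all subsequent Ext computations there are carried out under that assumption. So your concern about whether $\Z\bar\alpha_n\to\Z\bar\Gamma_n$ is injective---without which nonzero elements of $\Z\bar\alpha_n$ need not become invertible in $R_n$, obstructing the factorization through the Ore localization $r_n$---is a genuine subtlety that the paper does not address either; you are not missing anything the paper supplies. Your observation that $r_n$ is a noncommutative PID, so $E_2^{s,l}=0$ for $l\ge 2$ and the sequence degenerates, is also exactly what the paper exploits in Section~7.
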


\section{Computing the EXT Terms}

Recall that a pair of compatible splittings $\Z\rightarrow \Gamma_n$ and $\Z\rightarrow \alpha_n$ can be chosen such that they match the map $\alpha_n\xrightarrow{g} \Gamma_n \rightarrow \Z$. So $g$ restricts to a map $\bar{\alpha}_n \xrightarrow{g}\bar{\Gamma}_n$. It induces a map between the coefficients $$k_n=\Z\bar{\alpha}_n(\Z\bar{\alpha}_n\setminus \{0\})^{-1} \xrightarrow{g} \mathbb{K}_n=\Z\bar{\Gamma}_n(\Z\bar{\Gamma}_n\setminus \{0\})^{-1}.$$
To understand the Ext terms in Lemma 6.5, our goal is to compute Ext$_{r_n}^l(\mathcal{A}_{n,s}(S\setminus K); R_n)$. Since $\mathcal{A}_{n,s}(S\setminus K)$ is a torsion $k_n[t^{\pm 1}]$-module, $$\mathcal{A}_{n,s}(S\setminus K)=\oplus_{i=1}^e \frac{k_n[t^{\pm 1}]}{(p_i(t))}$$ for some polynomials $p_i(t)$. So we need to compute Ext$_{k_n[t^{\pm 1}]}^l(k_n[t^{\pm 1}]/{(p(t))};\Kt)$ for some polynomial $p(t)$.

\begin{fact}
Since $k_n[t^{\pm 1}]$ is a free $k_n[t^{\pm 1}]$ module, it is projective.

\end{fact}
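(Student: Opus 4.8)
The statement is the elementary fact that a free module over any ring is projective, so the plan is simply to record the standard argument and indicate the role this Fact plays in the subsequent Ext computation. First I would observe that $\kt$ is free of rank one over itself, with basis the singleton $\{1\}$. Then I would invoke either of the equivalent characterizations of projectivity. On the one hand, a module is projective exactly when it is a direct summand of a free module, and $\kt$ is (trivially) a direct summand of the free module $\kt$. On the other hand, one can verify the lifting property directly: given any surjection of right $\kt$-modules $\pi\colon M \twoheadrightarrow N$ and any $\kt$-homomorphism $f\colon \kt \to N$, choose $m \in M$ with $\pi(m) = f(1)$ and set $\tilde f(x) = m x$; this is a well-defined right $\kt$-homomorphism because $1$ generates $\kt$ freely, and $\pi \circ \tilde f = f$. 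Either argument finishes the proof in one line.

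The purpose of recording the Fact at this point is to legitimize the resolution used in the rest of Section 7. Since $\kt = k_n[t^{\pm 1}]$ is, like $R_n$, a noncommutative principal ideal domain, it has no zero divisors, so for any nonzero $p(t)$ the multiplication map $\cdot\, p(t)\colon \kt \to \kt$ is injective and
$$0 \longrightarrow \kt \xrightarrow{\ \cdot\, p(t)\ } \kt \longrightarrow \ktp \longrightarrow 0$$
is a length-one free resolution of $\ktp$. By the Fact, both terms are projective, so this is a genuine projective resolution, and applying $\text{Hom}_{\kt}(-,\Kt)$ and taking cohomology computes $\text{Ext}^{l}_{\kt}(\ktp,\Kt)$ as the kernel and cokernel of the map induced by $\cdot\, p(t)$ on $\text{Hom}_{\kt}(\kt,\Kt) \cong \Kt$. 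This is exactly what is needed to evaluate the terms appearing in Lemma 6.5, using the decomposition $\mathcal{A}_{n,s}(S\setminus K) \cong \oplus_{i=1}^{e} \kt/(p_i(t))$.

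There is no real obstacle here: this is a foundational homological-algebra fact, included only so that the resolution argument is self-contained. The one point I would be mildly careful about is that all the modules in play are one-sided over a noncommutative ring, so I would state the lifting property and write the resolution consistently for right $\kt$-modules (and keep track of which side $\Kt$ is a module on when passing to $\text{Hom}$); this is purely bookkeeping and does not affect the argument.
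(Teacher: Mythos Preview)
Your proposal is correct; indeed it supplies more than the paper does, since the paper simply records this as a self-evident fact with no argument at all. Your account of the lifting property and of how the fact feeds into the projective resolution $0 \to \kt \xrightarrow{\cdot\,p(t)} \kt \to \ktp \to 0$ matches exactly how the paper proceeds immediately afterward to compute the Ext groups.
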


\begin{fact}

For a projective module $R$, Ext$_R^l(R, B)=0$ for all $B$ and for $l\geq 1$.

\end{fact}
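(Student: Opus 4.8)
The plan is to reduce the statement to the single observation that a projective module is its own projective resolution. Write $P$ for the projective module in question (in the application, $P=\kt$ viewed as a module over itself, which is projective by the preceding Fact) and $R$ for the ambient ring. First I would recall the definition: $\text{Ext}^l_R(P,B)$ is the $l$-th cohomology of the cochain complex $\text{Hom}_R(Q_\bullet,B)$ obtained by applying the contravariant functor $\text{Hom}_R(-,B)$ to a projective resolution $Q_\bullet\to P$, and this cohomology is independent of the chosen resolution.

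Next I would exhibit the tautological resolution of $P$ with $Q_0=P$, $Q_i=0$ for $i\geq 1$, and augmentation the identity $P\to P$. This complex is exact at every degree and all of its terms are projective, so it is a bona fide projective resolution of $P$. Applying $\text{Hom}_R(-,B)$ produces the cochain complex concentrated in degree $0$ with term $\text{Hom}_R(P,B)$ and zero in all other degrees. Taking cohomology then yields $\text{Ext}^0_R(P,B)\cong \text{Hom}_R(P,B)$ and $\text{Ext}^l_R(P,B)=0$ for every $l\geq 1$ and every module $B$, which is the assertion.

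I do not expect any genuine obstacle: this is a standard fact of homological algebra, and the argument uses nothing beyond the definition of a derived functor and the lifting property defining projectivity. The only point worth a word in the present setting is that $\kt$ is non-commutative, so one should fix a side (say, right modules) once and for all when speaking of $\text{Ext}$, resolutions and $\text{Hom}$; the argument above is insensitive to this choice. In combination with the previous Fact, this gives $\text{Ext}^l_{\kt}(\kt,\Kt)=0$ for $l\geq 1$, which is exactly the input needed to truncate the Ext spectral sequence of Lemma 6.5 and, together with a length-one free resolution of $\ktp$, to compute $\text{Ext}^l_{\kt}(\ktp,\Kt)$ in the subsequent argument.
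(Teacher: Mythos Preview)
Your argument is correct: the trivial resolution $0\to P\xrightarrow{\mathrm{id}}P\to 0$ is a projective resolution of $P$, so $\mathrm{Ext}^l_R(P,B)$ vanishes for $l\geq 1$ by definition. The paper itself gives no proof of this Fact at all---it is simply recorded as a standard piece of homological algebra---so there is nothing to compare; your write-up supplies exactly the routine justification one would expect, and your remarks about fixing a side in the noncommutative setting and about the role of this fact in the subsequent long exact sequence computation are apt.
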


For simplicity, denote $A=k_n[t^{\pm 1}]/{(p(t))}$, $B=\Kt$ and $R=k_n[t^{\pm 1}]$. 
Consider the short exact sequence $0 \rightarrow \kt \xrightarrow{p(t)} \kt \rightarrow \ktp \rightarrow 0$, which is a projective resolution of $A$. The homology long exact sequence gives 
\begin{align*}
\text{Hom}_R(\ktp, B) &\rightarrow \text{Hom}_R(\kt, B) \xrightarrow{\tilde{p}(t)} \text{Hom}_R(\kt, B)\\
\rightarrow \text{Ext}_R^1(\ktp, B) &\rightarrow \text{Ext}_R^1(\kt, B) \rightarrow\text{Ext}_R^1(\kt, B)\\
\rightarrow \text{Ext}_R^2(\ktp, B) &\rightarrow \text{Ext}_R^2(\kt, B) \rightarrow \text{Ext}_R^2(\kt, B)\rightarrow ...
\end{align*} 
where $\tilde{p}(t):=g(p(t))$. Since $\ktp$ is a torsion $\kt$-module and $B=\Kt$ is a free $\Kt$-module, we have that $\text{Hom}_R(\ktp, B)=0$. 

\begin{lem}
$\text{Hom}_{\kt}(\kt,\Kt)\cong \Kt$ as $\Kt$-modules.
\end{lem}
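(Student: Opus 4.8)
The plan is to compute the Hom directly from the definitions, using that $\kt$ is free of rank one over itself and that $g$ identifies the relevant subrings. Recall $R := \kt$ and we want $\mathrm{Hom}_R(R, \Kt)$, where $\Kt$ is viewed as an $R$-module via the ring homomorphism $\tilde g : \kt \to \Kt$ induced by the map $g : \bar\alpha_n \to \bar\Gamma_n$ together with the compatible splittings $\xi : \Z \to \alpha_n$ and $\xi : \Z \to \Gamma_n$ matching $\alpha_n \xrightarrow{g} \Gamma_n \to \Z$. The compatibility of splittings is exactly what makes $t \mapsto t$, so $\tilde g$ is a genuine (skew-)Laurent-polynomial ring homomorphism $\kt \to \Kt$ extending $g : k_n \to \mathbb{K}_n$.

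First I would invoke the standard adjunction/evaluation isomorphism: for any ring $R$ and any left (or, in our right-module convention, appropriately-sided) $R$-module $B$, the evaluation map $\mathrm{Hom}_R(R, B) \to B$, $f \mapsto f(1)$, is an isomorphism of abelian groups, and it is a homomorphism of $\Kt$-modules once we record the correct module structures. Concretely, $\mathrm{Hom}_R(R, B)$ carries a left $\Kt$-module structure coming from the left $\Kt$-module structure on $B = \Kt$ (left multiplication in $\Kt$, which commutes with the right $R$-action on $B$ through $\tilde g$ since $\Kt$ is associative); under evaluation at $1$ this matches the left $\Kt$-module structure on $\Kt$ itself. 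So the only content is to verify that the right-$R$-linearity condition on $f$ imposes no further constraint: given $b = f(1) \in \Kt$, the assignment $r \mapsto b \cdot \tilde g(r)$ is a well-defined right-$R$-linear map $R \to \Kt$, and every right-$R$-linear map arises this way, uniquely determined by $b$. This gives the bijection $\mathrm{Hom}_R(R,\Kt) \cong \Kt$.

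The steps in order: (1) spell out the $R = \kt$-module structure on $B = \Kt$ via $\tilde g$, noting $\tilde g$ is injective since $g : \bar\alpha_n \hookrightarrow \bar\Gamma_n$ need not be injective in general but the induced map on group rings localized appropriately still behaves well — actually injectivity is not needed for the Hom computation, only that $B$ is an $R$-module; (2) define $\Phi : \mathrm{Hom}_R(R,\Kt) \to \Kt$ by $\Phi(f) = f(1)$ and $\Psi : \Kt \to \mathrm{Hom}_R(R,\Kt)$ by $\Psi(b)(r) = b\,\tilde g(r)$; (3) check $\Phi, \Psi$ are mutually inverse (immediate: $\Phi\Psi(b) = b\,\tilde g(1) = b$, and $\Psi\Phi(f)(r) = f(1)\tilde g(r) = f(1 \cdot r) = f(r)$ using right-$R$-linearity); (4) check both maps are $\Kt$-linear for the left $\Kt$-structures described above. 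This is all essentially formal.

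The only mild subtlety — and the place I would be most careful — is bookkeeping the sidedness: the Ext computation in Lemma 6.5 and the long exact sequence in the text treat $\mathrm{Hom}_{\kt}(-, \Kt)$ with $\Kt$ as a $\kt$-bimodule (left via $\tilde g$-twisted multiplication, right via $\tilde g$ too, or via the relevant $R_n$-bimodule structure), so I must state precisely which $\kt$-action is used to form $\mathrm{Hom}_R$ and which residual $\Kt$-action survives on the Hom. Since the paper's convention (Definition 4.1 and the surrounding setup) is that $C_*(M_{\alpha_n})\otimes_{\Z\alpha_n} r_n$ is a right $r_n$-module and $\mathrm{Hom}$ produces left modules, the cleanest phrasing is: $\Kt$ is a $(\Kt, \kt)$-bimodule via left multiplication and right multiplication through $\tilde g$, and then $\mathrm{Hom}_{\kt}(\kt, \Kt)$ is a left $\Kt$-module, evaluation at $1$ being the asserted isomorphism. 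I do not expect any real obstacle here — this lemma is a routine "Hom out of a free rank-one module" statement — but getting the bimodule annotations consistent with the rest of Section 7 is the one thing worth double-checking before moving on to the genuinely substantive $\mathrm{Ext}^1$ and $\mathrm{Ext}^2$ computations that follow.
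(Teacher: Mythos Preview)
Your proposal is correct and follows essentially the same approach as the paper: both define mutually inverse maps by evaluation at $1$ and by $b \mapsto (r \mapsto b\cdot \tilde g(r))$, then check these are inverse and $\Kt$-linear. Your $\Phi$ and $\Psi$ are the paper's $\Psi$ and $\Phi$ with the names swapped, and your additional care about sidedness and bimodule bookkeeping is a welcome refinement of what the paper leaves implicit.
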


\begin{proof}
Recall that $\kt$ and $\Kt$ are Laurent polynomial rings over skew fields and $g$ can be extended to a ring map $g:\kt \rightarrow \Kt$ which sends $1$ to $1$ and $t$ to $t$.

Define $\Phi :\Kt \rightarrow \text{Hom}_{\kt}(\kt, \Kt)$ by sending $b(t)$ to $$\alpha :=\Phi(b(t)) :\kt \rightarrow \Kt$$ where $\alpha (a(t))=b(t)\cdot g(a(t))$. 

Now we show $\alpha $ is an element of $\text{Hom}_{\kt}(\kt, \Kt)$:
\begin{align*}
\alpha (a(t)+a'(t))& =b(t)(g(a(t))+g(a'(t))) \\
                & =b(t)g(a(t))+b(t)g(a'(t))\\
                & = \alpha(a(t))+\alpha (a'(t)).
\end{align*}

For any $r(t) \in \kt$, 
\begin{align*}
\alpha(a(t)r(t))& =b(t)g(a(t)r(t)) \\
                & =b(t)g(a(t))g(r(t))\\
                & =\alpha(a(t))g(r(t)). 
\end{align*}

Then define $\Psi : \text{Hom}_{\kt}(\kt, \Kt) \rightarrow \Kt$ by sending $\alpha$ to $\Psi (\alpha)=\alpha (1)$.
So \begin{align*}
\Psi \circ \Phi (b(t)) &=\Phi (b(t))(1) \\
  & =b(t)g(1)\\
  &=b(t) .           
\end{align*}
and $\Phi \circ \Psi(\alpha) =\Phi (\alpha (1))$, 
where  \begin{align*}
\Phi (\alpha (1))(a(t))&=\alpha(1)g(a(t))\\
& =\alpha (1  a(t))\\
&=\alpha(a(t)).           
\end{align*}
Since $\alpha$ is a $\kt$-module homomorphism. Thus $\Phi \circ \Psi(\alpha) =\alpha$ and we conclude that
$\text{Hom}_{\kt}(\kt,\Kt )\cong \Kt$.

\end{proof}

Thus the above long exact sequence becomes
$$0 \rightarrow 0 \rightarrow \Kt \xrightarrow{\tilde{p}(t)} \Kt \rightarrow \text{Ext}_R^1(A, B) \rightarrow 0 \rightarrow 0 \rightarrow \text{Ext}_R^2(A, B)\rightarrow 0\rightarrow ...,$$

Since $\tilde{p}(t)=g(p(t))$, it is clear that the degree of $\tilde{p}(t)$ is less or equal to the degree of $p(t)$.

Therefore, \begin{align*}
&\text{Ext}_R^0(A, B)=\text{Hom}_R(A, B)=0,\\
&\text{Ext}_R^1(A, B)\cong \Kt/{(\tilde{p}(t))},\\
&\text{Ext}_R^{\geq 2}(A, B)=0.           
\end{align*}

Recall in lemma 6.5, $$E_2^{s,l}:=\text{Ext}_{r_n}^l (\mathcal{A}_{n,s}(S\setminus K),R_n)\Longrightarrow H^{s+l}(S\setminus K;R_n).$$ Since $$\text{Ext}_R^l(\oplus_{i=1}^e \frac{k_n[t^{\pm 1}]}{(p_i(t))}, \Kt)=\prod_{i=1}^e\text{Ext}_R^l( \frac{k_n[t^{\pm 1}]}{(p_i(t))}, \Kt),$$ and Ext$_R^l(\ktp, \Kt)=0$  for $l\neq 1$, Ext$_R^l(\mathcal{A}_{n,s}(S\setminus K),R_n)=0$ for $l\neq 1$. As a result, $$H^{s+1}(S\setminus K;R_n)\cong \text{Ext}_{r_n}^1 (\mathcal{A}_{n,s}(S\setminus K),R_n).$$

Therefore, 
\begin{align*}
\text{rk}_{\mathbb{K}_n}H^{s+1}(S\setminus K;R_n)&= \text{rk}_{\mathbb{K}_n}\text{Ext}_{r_n}^1 (\mathcal{A}_{n,s}(S\setminus K),R_n)\\
&=\text{rk}_{\mathbb{K}_n}\text{Ext}_{\kt}^1 (\oplus_{i=1}^e \frac{k_n[t^{\pm 1}]}{(p_i(t))},\Kt)\\
&=\sum\limits_{i=1}^e \text{rk}_{\mathbb{K}_n}\frac{\Kt}{(\tilde{p_i}(t))} \\
&\leq \sum\limits_{i=1}^e  \text{rk}_{k_n}\frac{\kt}{(p_i(t))} \\
&=\delta _{n,s}(S\setminus K) .\\
\end{align*}
We summarize our calculations as a lemma.

\begin{lem}
With above notations, we have $\text{rk}_{\mathbb{K}_n}H^{s+1}(S\setminus K;R_n)\leq \delta _{n,s}(S\setminus K) $.

\end{lem}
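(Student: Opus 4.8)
The plan is to combine the two spectral sequences (the local Leray/Milnor-fibration sequence of Lemma 6.5 together with the Ext computation just completed) and then a rank count. First I would observe that Lemma 6.5 gives a spectral sequence with $E_2^{s,l}=\mathrm{Ext}_{r_n}^l(\mathcal{A}_{n,s}(S\setminus K),R_n)$ converging to $H^{s+l}(S\setminus K;R_n)$. From the Ext computation (using that $k_n[t^{\pm1}]$ is a PID, so $\mathcal{A}_{n,s}(S\setminus K)$ is a direct sum of cyclic torsion modules $k_n[t^{\pm1}]/(p_i(t))$, and that $\mathrm{Ext}^l$ commutes with finite direct sums), the only nonvanishing column is $l=1$: $E_2^{s,1}\cong\bigoplus_i \mathbb{K}_n[t^{\pm1}]/(\widetilde{p_i}(t))$ and $E_2^{s,l}=0$ for $l\neq 1$. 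A spectral sequence concentrated in a single column degenerates at $E_2$, so $H^{s+1}(S\setminus K;R_n)\cong \mathrm{Ext}_{r_n}^1(\mathcal{A}_{n,s}(S\setminus K),R_n)$ with no extension problems.

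Next I would take $\mathbb{K}_n$-ranks of both sides. Since $R_n\cong \mathbb{K}_n[t^{\pm1}]$ and $\mathbb{K}_n[t^{\pm1}]/(\widetilde{p_i}(t))$ is free of rank $\deg\widetilde{p_i}(t)$ over $\mathbb{K}_n$, additivity of rank over the finite direct sum gives
$$\mathrm{rk}_{\mathbb{K}_n}H^{s+1}(S\setminus K;R_n)=\sum_{i=1}^e \deg\widetilde{p_i}(t).$$
Here $\widetilde{p_i}(t)=g(p_i(t))$ where $g\colon k_n[t^{\pm1}]\to \mathbb{K}_n[t^{\pm1}]$ is the ring map induced by $\bar{\alpha}_n\to\bar{\Gamma}_n$ fixing $t$; since $g$ acts coefficient-wise and sends $t$ to $t$, it cannot raise the $t$-degree, so $\deg\widetilde{p_i}(t)\le\deg_t p_i(t)$. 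On the other side, $\mathrm{rk}_{k_n}\bigl(k_n[t^{\pm1}]/(p_i(t))\bigr)=\deg_t p_i(t)$, and summing gives $\sum_i\deg_t p_i(t)=\mathrm{rk}_{k_n}\mathcal{A}_{n,s}(S\setminus K)=\delta_{n,s}(S\setminus K)$ by the definition of the higher-order degree. Chaining these (in)equalities yields $\mathrm{rk}_{\mathbb{K}_n}H^{s+1}(S\setminus K;R_n)\le\delta_{n,s}(S\setminus K)$, which is the claim.

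The only point needing care — and the main potential obstacle — is the degeneration-and-rank step over the skew-field context: one must make sure that "rank over $\mathbb{K}_n$" is well-defined and additive for the modules appearing (this is the content of the earlier Proposition 2.5(3), applied to the skew field $\mathbb{K}_n$ and to $\mathbb{K}_n[t^{\pm1}]$-modules after base change), and that the identification $H^{s+1}\cong E_2^{s,1}$ is genuinely an isomorphism of $\mathbb{K}_n[t^{\pm1}]$-modules and not merely an equality of ranks — but a single-column spectral sequence has no differentials in or out of $E_2^{s,1}$ and no filtration jumps, so this is automatic. A secondary subtlety is that $\mathcal{A}_{n,s}(S\setminus K)$ is genuinely a torsion $k_n[t^{\pm1}]$-module; this follows from Lemma 6.4 (its homology with $R_n$-coefficients is a finite-dimensional $k_n$-vector space), which forces finite $\delta_{n,s}$ and hence torsionness. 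With these in hand the argument is the straightforward bookkeeping sketched above, and I would record it exactly as the displayed chain of equalities and the single inequality, concluding with the statement of the lemma.
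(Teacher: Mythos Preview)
Your proposal is correct and follows essentially the same route as the paper: use Lemma~6.5 together with the vanishing of $\mathrm{Ext}^l$ for $l\neq 1$ to collapse the spectral sequence to a single column, identify $H^{s+1}(S\setminus K;R_n)\cong \mathrm{Ext}^1_{r_n}(\mathcal{A}_{n,s}(S\setminus K),R_n)\cong \bigoplus_i \mathbb{K}_n[t^{\pm1}]/(\widetilde{p_i}(t))$, and then compare $\mathbb{K}_n$-ranks using $\deg \widetilde{p_i}(t)\le \deg p_i(t)$. Your explicit remarks on why the spectral sequence degenerates and why $\mathcal{A}_{n,s}(S\setminus K)$ is torsion make the argument slightly more self-contained than the paper's, but the substance is identical.
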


\section{Upper Bounds for the Higher-Order Degrees by the Local Invariants}

With the above calculation, we are ready to prove the following result:

\begin{theorem}\label{local-bound}

Let $V$ be a reduced hypersurface in $\CP$ with generic hyperplane $H$ at infinity and let $\U=\CP\setminus V \cup H$. Then for $i\leq m$:

$$\delta_n^i(U) \leq
\sum\limits_{p+q=i+1,} \sum\limits_{0\leq k_0< k_1,...< k_p\leq m,}  \sum\limits_{a+b=q} \theta(k_p, a)\cdot \delta _{n,{b-1}}(S^{2m-2{k_p}+1}\setminus K^{2m-2{k_p}-1})$$ for $b \geq 1$, 
where $\theta(k_p, a)$ is the number of $a$-dimensional cells of $W^{k_p} $, a finite CW complex homotopy equivalent to $Z_{j_p}^{k_p}$ (which is described in section 6).

\end{theorem}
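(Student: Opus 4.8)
The plan is to assemble the two spectral sequences of Lemmas 6.3 and 6.4 together with the Ext computation of Section 7, tracking ranks over $\mathbb{K}_n$ at each stage, and then compare with $\delta_{n,i}(\U)$ using the reduction $\delta_{n,i}(\U)\leq\delta_{n,i}(X)$ established in Section 6.1. Concretely: first I would replace $\U$ by $X=\partial N(V)$, so it suffices to bound $\delta_{n,i}(X)=\mathrm{rk}_{\mathbb{K}_n}H_i(X;\mathbb{K}_n[t^{\pm1}])$; by the universal coefficient property (Remark 3.6(3)) over the PID $\mathbb{K}_n[t^{\pm1}]$, the rank of $H_i$ with these coefficients equals the rank of $H^i$, so I can work entirely cohomologically and feed directly into the Mayer--Vietoris spectral sequence. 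Since rank over $\mathbb{K}_n$ is subadditive in the sense that the rank of any subquotient of a finite-rank $R_n$-module is at most the rank of the module (ranks are additive on short exact sequences of $R_n$-modules, using Prop.~2.5 and that $\mathbb{K}_n[t^{\pm1}]$ is a PID, so $\mathrm{rk}_{\mathbb{K}_n}$ of $H^{p+q}(X;R_n)$ is bounded by $\sum_{p+q=i+1}$ of the ranks of the $E_1^{p,q}$ terms $\bigoplus_{k_0<\cdots<k_p}H^q(Y_{j_0}^{k_0}\cap\cdots\cap Y_{j_p}^{k_p};R_n)$, since the $E_\infty$ term is a subquotient of $E_1$.

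Second, for each intersection $Y_{j_0}^{k_0}\cap\cdots\cap Y_{j_p}^{k_p}$, apply the Leray spectral sequence of Lemma 6.4: $E_2^{a,b}=H^a(Z_{j_p}^{k_p};\mathcal{H}^b(S^{2m-2k_p+1}\setminus K^{2m-2k_p-1};R_n))$ converging to $H^{a+b}$ of the intersection. Again the rank of $H^q$ of the intersection is at most $\sum_{a+b=q}\mathrm{rk}_{\mathbb{K}_n}E_2^{a,b}$. Here $Z_{j_p}^{k_p}$ is homotopy equivalent to a finite CW complex $W^{k_p}$ with $\theta(k_p,a)$ cells in dimension $a$, and the local system $\mathcal{H}^b$ has stalk $H^b(S\setminus K;R_n)$; computing group cohomology of $Z_{j_p}^{k_p}$ with local coefficients via the cellular chain complex of $W^{k_p}$ bounds $\mathrm{rk}_{\mathbb{K}_n}H^a(Z_{j_p}^{k_p};\mathcal{H}^b)$ by $\theta(k_p,a)\cdot\mathrm{rk}_{\mathbb{K}_n}H^b(S\setminus K;R_n)$ (the $a$-cochains form a direct sum of $\theta(k_p,a)$ copies of the stalk, and cohomology is a subquotient). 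Third, invoke Lemma 7.3: $\mathrm{rk}_{\mathbb{K}_n}H^{b}(S^{2m-2k_p+1}\setminus K^{2m-2k_p-1};R_n)\leq\delta_{n,b-1}(S^{2m-2k_p+1}\setminus K^{2m-2k_p-1})$ for $b\geq1$, and note the $b=0$ contribution is governed by Prop.~\ref{a2} (so $H^0(S\setminus K;R_n)$ has rank $0$ over $\mathbb{K}_n$ since the relevant map to $\Gamma_n$ restricted along the meridian is nontrivial, hence does not contribute). Combining the three bounds and re-indexing ($s=b-1$, matching the notation in the theorem statement) yields exactly $$\delta_{n,i}(\U)\leq\sum_{p+q=i+1}\sum_{0\leq k_0<\cdots<k_p\leq m}\sum_{a+b=q}\theta(k_p,a)\cdot\delta_{n,b-1}(S^{2m-2k_p+1}\setminus K^{2m-2k_p-1}).$$

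The main obstacle I anticipate is the bookkeeping around which terms genuinely survive and contribute — in particular, justifying that $H^0$ of the link complements and $H^0$ of the strata pieces carry $\mathbb{K}_n$-rank zero (so that the sum can legitimately be restricted to $b\geq1$), and making sure the local system cohomology bound $\mathrm{rk}\,H^a(Z;\mathcal{H}^b)\leq\theta(k_p,a)\cdot\mathrm{rk}\,H^b(S\setminus K;R_n)$ is correctly set up despite $\mathcal{H}^b$ being a nontrivial local system rather than constant coefficients; this requires that the cellular cochain complex of $W^{k_p}$ with these twisted coefficients still has $a$-cochain module isomorphic as an $R_n$-module to $\bigl(H^b(S\setminus K;R_n)\bigr)^{\theta(k_p,a)}$, which holds because each cell contributes one copy of the stalk. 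A secondary point is ensuring all the modules in sight are of finite $\mathbb{K}_n$-rank so that the additivity/subquotient rank inequalities apply; this follows from Lemma 7.3 (finiteness of the local terms), finiteness of the CW structures, and the fact that $X$ is a finite CW complex, so only finitely many nonzero terms appear in each spectral sequence. The convergence of both spectral sequences and the compatibility of the chosen splittings $\xi:\Z\to\Gamma_n$ and $\xi:\Z\to\alpha_n$ (arranged in Section 6.2) guarantee that identifying $R_n\cong\mathbb{K}_n[t^{\pm1}]$ is consistent across all the pieces, so no splitting-dependence issue arises in the final rank inequality.
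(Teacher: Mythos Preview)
Your overall architecture matches the paper's: reduce from $\U$ to $X$, run the Mayer--Vietoris and Leray spectral sequences, bound the cellular cochains over the base $Z_{j_p}^{k_p}$ by $\theta(k_p,a)$ copies of the stalk, and finish with the Ext computation (Lemma~7.4). That part is fine.

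The gap is in your passage from homology to cohomology. You claim that ``the rank of $H_i$ with these coefficients equals the rank of $H^i$,'' citing Remark~3.6(3). That remark is about the skew field $\mathcal{K}_n$, where indeed $H^i\cong\mathrm{Hom}_{\mathcal{K}_n}(H_i,\mathcal{K}_n)$ and dimensions agree. But you are working over the PID $\mathbb{K}_n[t^{\pm1}]$, and $\delta_{n,i}$ is the $\mathbb{K}_n$-rank, which detects torsion. Over the PID, UCT gives
\[
H^i(X;\mathbb{K}_n[t^{\pm1}])\;\cong\;\mathrm{Hom}\bigl(H_i,\mathbb{K}_n[t^{\pm1}]\bigr)\oplus\mathrm{Ext}^1\bigl(H_{i-1},\mathbb{K}_n[t^{\pm1}]\bigr),
\]
and when $H_*$ is torsion the Hom term vanishes while $\mathrm{Ext}^1$ preserves $\mathbb{K}_n$-rank. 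So the correct identity is $\mathrm{rk}_{\mathbb{K}_n}H_i(X)=\mathrm{rk}_{\mathbb{K}_n}H^{i+1}(X)$, not $H^i$. Your final displayed inequality has $p+q=i+1$, which is the right index, but your stated reasoning would produce $p+q=i$; as written the argument is inconsistent.

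There is also a logical prerequisite you skipped: the identity $\mathrm{rk}_{\mathbb{K}_n}H_i=\mathrm{rk}_{\mathbb{K}_n}H^{i+1}$ only holds once you know all $H_*(X;\mathbb{K}_n[t^{\pm1}])$ are torsion (equivalently, finite $\mathbb{K}_n$-rank). The paper handles this in two passes: first run the spectral sequence chain to bound $\mathrm{rk}_{\mathbb{K}_n}H^i(X)$ and invoke Lemma~6.4 (the link complement cohomology is a finite-dimensional $\mathbb{K}_n$-vector space) to conclude finiteness of $H^i(X)$ for every $i$; this forces the free part of each $H_i(X)$ to vanish. Only then does the paper use the UCT shift and rerun the bound on $H^{i+1}(X)$ to get the stated inequality with $p+q=i+1$. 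You should insert this finiteness step and correct the degree shift.
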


\begin{proof}
In section 6, we have shown that 
$$\delta_{n,i}(\U)=\text{rk}_{\mathbb{K}_n}H_i(\U;\mathbb{K}_n[t^{\pm 1}])\leq \text{rk}_{\mathbb{K}_n}H_i(X;\mathbb{K}_n[t^{\pm 1}]).$$ 
By Lemma 6.1, 
\begin{align*}
\text{rk}_{\mathbb{K}_n}H^i(X;\mathbb{K}_n[t^{\pm 1}])&\leq \sum\limits_{p+q=i,} \sum\limits_{0\leq k_0<...<k_p\leq m} \text{rk}_{\K} H^q(Y_{j_0}^{k_0}\cap ...\cap Y_{j_p}^{k_p};R_n)\\
\text{(by Lemma 6.2) }                &\leq 
\sum\limits_{p+q=i,} \sum\limits_{0\leq k_0< ...< k_p\leq m,}  \sum\limits_{a+b=q} \text{rk}_{\K} H^a( Z_{j_p}^{k_p};\mathcal{H}^b(S^{2m-2{k_p}+1}\setminus K^{2m-2{k_p}-1};R_n)).
\end{align*}

Note that $Z_{j_p}^{k_p}$ is homotopy equivalent to a finite $k_p$-dimensional CW complex $W^{k_p}$. Let $\theta (k_p, a)$ be the number of $a$-dimensional cells of $W^{k_p} $,
then the inequality becomes
$$\text{rk}_{\mathbb{K}_n}H^i(X;\mathbb{K}_n[t^{\pm 1}])\leq
\sum\limits_{p+q=i,} \sum\limits_{0\leq k_0< k_1,...< k_p\leq m,}  \sum\limits_{a+b=q} \theta(k_p, a)\cdot \text{rk}_{\K}H^b(S^{2m-2{k_p}+1}\setminus K^{2m-2{k_p}-1};R_n).$$ 

By lemma 6.3, we know that $H_b(S^{2m-2{k_p}+1}\setminus K^{2m-2{k_p}-1};R_n)$ is a finite dimensional $\K$-vector space, then by UCT, so is $H^b(S^{2m-2{k_p}+1}\setminus K^{2m-2{k_p}-1};R_n)$. Hence $\text{rk}_{\mathbb{K}_n}H^i(X;\mathbb{K}_n[t^{\pm 1}])$ is finite.

Apply UCT again, 
$H^i(X;\mathbb{K}_n[t^{\pm 1}])=\text{free}H_i(X;\mathbb{K}_n[t^{\pm 1}])\oplus\text{torsion}H_{i-1}(X;\mathbb{K}_n[t^{\pm 1}])$, and since we have shown that $\text{rk}_{\mathbb{K}_n}H^i(X;\mathbb{K}_n[t^{\pm 1}]) $ is finite, so is $\text{rk}_{\mathbb{K}_n}H_i(X;\mathbb{K}_n[t^{\pm 1}]) $.

 Therefore,
\begin{align*}
\delta_{n,i}(\U)&\leq rk_{\mathbb{K}_n}H_i(X;\mathbb{K}_n[t^{\pm 1}])=\text{rk}_{\mathbb{K}_n}H^{i+1}(X;\mathbb{K}_n[t^{\pm 1}])\\
&\leq
\sum\limits_{p+q=i+1,} \sum\limits_{0\leq k_0< k_1,...< k_p\leq m,}  \sum\limits_{a+b=q} \theta(k_p, a)\cdot \text{rk}_{\mathbb{K}_n}H^{b}(S^{2m-2{k_p}+1}\setminus K^{2m-2{k_p}-1};R_n)\\
&\leq
\sum\limits_{p+q=i+1,} \sum\limits_{0\leq k_0< k_1...< k_p\leq m,}  \sum\limits_{a+b=q} \theta(k_p, a)\cdot\delta _{n,{b-1}}(S^{2m-2{k_p}+1}\setminus K^{2m-2{k_p}-1})
\end{align*}
 for $b \geq 1$, where the last inequality follows by lemma 6.5 and lemma 7.4.

\end{proof}

According to the theorem, we see that $\delta_{n,0}(\U)$ depends on $\delta_{n,0}(S\setminus K)$ where $S\setminus K$ denotes the link complements of the various strata of $V\setminus H$, while $\delta_{n,1}(\U)$ depends on $\delta_{n,0}(S\setminus K)$ and $\delta_{n,1}(S\setminus K)$. In fact, the global degree depends on a smaller range of the degrees of local links.

\begin{cor}
For $0\leq i\leq m$, $\delta_{n,i}(\U)$ depends on $  \delta _{n,c}(S^{2m-2{k}+1}\setminus K^{2m-2k-1})$ with the ranges

$m-i\leq k\leq m$ and

$3m-3k-2i\leq c\leq m-k$.

\end{cor}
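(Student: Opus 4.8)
The plan is to extract, from the triple summation in Theorem \ref{local-bound}, precisely which local degrees $\delta_{n,c}(S^{2m-2k+1}\setminus K^{2m-2k-1})$ can contribute a nonzero term, and to intersect the resulting range constraints. I would start by unwinding the index conditions appearing in the bound for $\delta_{n,i}(\U)$: we have $p+q=i+1$, $a+b=q$, $0\le k_0<k_1<\cdots<k_p\le m$, and the local degree that occurs is $\delta_{n,b-1}(S^{2m-2k_p+1}\setminus K^{2m-2k_p-1})$, so in the notation of the corollary we set $k:=k_p$ and $c:=b-1$. Thus I must determine, over all admissible $(p,q,a,b)$ and all admissible chains with top index $k_p=k$, the set of values of $k$ and of $c=b-1$ for which $\theta(k_p,a)\cdot\delta_{n,b-1}(S\setminus K)$ is not forced to vanish.

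The key constraints are three. First, since the chain $0\le k_0<\cdots<k_p\le m$ has $p+1$ distinct integers with largest entry $k$, we need $p\le k$; combined with $p=i+1-q=i+1-a-b$ and $a\ge 0$, $b\ge 1$ this gives $i+1-a-b\le k$, hence a lower bound on $k$. Second, by Lemma 6.3 the local homology $H_b(S^{2m-2k+1}\setminus K^{2m-2k-1};R_n)$ vanishes for $b\ge m-k+1$; since $c=b-1$ this forces $c\le m-k$, i.e. $b\le m-k$. Third, $\theta(k_p,a)$ is the number of $a$-cells of a CW complex of dimension $k_p=k$, so $\theta(k,a)=0$ unless $0\le a\le k$; together with $a=q-b=(i+1-p)-b$ and the fact that the relevant chains realizing $k_p=k$ can have $p$ as small as... — here I would carefully track how small $p$ can be made while keeping $a\le k$ and all the other inequalities. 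Feeding $p\le k$, $a\le k$, and $b\le m-k$ into $p+a+b=i+1$ yields $i+1\le k+k+(m-k)=m+k$, hence $k\ge m+1-i$, i.e. $m-i\le k\le m$ (the upper bound $k\le m$ is built in). For the range of $c$: from $b\le m-k$ we get $c\le m-k$; for the lower bound, using $p\le k$ and $a\le k$ in $p+a+b=i+1$ gives $b\ge i+1-2k$, so $c=b-1\ge i-2k$ — but one must do better, since the chain condition together with the requirement that \emph{every} $k_0<\cdots<k_{p-1}<k$ be available is more restrictive. I expect that pushing the constraint $p+1$ integers below $k$ plus the cell-dimension bound $a\le k_p=k$ through all the intermediate strata tightens this to $c\ge 3m-3k-2i$ as claimed; verifying this sharper lower bound is the main bookkeeping hurdle.

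Concretely, the steps I would carry out, in order: (1) fix $k=k_p$ and rewrite the admissible set of $(p,a,b)$ subject to $p+a+b=i+1$, $b\ge 1$, $0\le a\le k$, $1\le p\le k$, and $1\le b\le m-k$; (2) project this set onto the $k$-coordinate to obtain $m-i\le k\le m$; (3) for fixed $k$ in that range, project onto $c=b-1$, using the extremal choices of $p$ and $a$ (namely $p,a$ as large as allowed, i.e. near $k$, to make $b$ small, and $p,a$ as small as allowed to make $b$ large) to obtain $3m-3k-2i\le c\le m-k$; (4) observe that whenever the interval for $c$ is nonempty the corresponding term genuinely can appear (no further cancellation is claimed — the corollary only asserts a dependence range, an upper envelope), so no nonvanishing argument beyond Lemma 6.3 is needed.

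\textbf{Main obstacle.} The delicate point is the lower bound $c\ge 3m-3k-2i$: the naive estimate from $p+a+b=i+1$ alone gives only $c\ge i-2k$, which is weaker. The improvement must come from using the \emph{full} chain $0\le k_0<k_1<\cdots<k_p=k$ — in particular that the inner Leray/Mayer–Vietoris bookkeeping forces the cell-dimension parameter $a$ to be controlled not just by $k_p$ but effectively by how much room the chain leaves, and that Lemma 6.3 applied at \emph{each} level $k_j$ (not only $k_p$) constrains the admissible $q$. I would reconcile the factor of $3$ by re-deriving it from $k\ge m+1-i$ substituted back into $b\le m-k$ together with the symmetric lower estimate; the bookkeeping to make the two bounds consistent, and to confirm the interval $[3m-3k-2i,\,m-k]$ is exactly the projection rather than a proper superset, is where I expect to spend the most care.
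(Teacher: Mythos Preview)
Your approach has a genuine gap: pure index bookkeeping from Theorem \ref{local-bound} cannot give the sharpened ranges. It gives only what the paper records as Lemma 8.3, namely $0\le k\le m$ and $i-3k\le c\le m-k$. Indeed, with $p+a+b=i+1$, $p\le k$, $0\le a\le 2k$ (note: $Z_{j_p}^{k_p}$ has \emph{real} dimension $2k_p$, not $k_p$), and $1\le b\le m-k+1$, the best lower bound for $c=b-1$ is $c=i-p-a\ge i-3k$, and no nontrivial lower bound on $k$ emerges. Your line ``$i+1\le k+k+(m-k)=m+k$ hence $k\ge m+1-i$'' is an arithmetic slip: $i+1\le m+k$ gives $k\ge i+1-m$, which is vacuous for $i\le m$. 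Your speculation that applying Lemma 6.3 at each intermediate level $k_j$ of the chain tightens things also does not work: the Leray spectral sequence in Lemma 6.2 involves only the link of the top stratum $k_p$, so the intermediate $k_j$ constrain nothing beyond $p\le k_p$.

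The missing idea is geometric, not combinatorial. The paper's proof is a two-step argument: Step 1 is exactly the bookkeeping you attempt, yielding the weak range above. Step 2 applies the Lefschetz hyperplane section theorem: slice by a generic linear subspace $L\cong\mathbb{CP}^{m-j+1}$ with $j=m-i$, so that $\U\cap L\hookrightarrow\U$ induces an isomorphism on $H_l(-;\Kt)$ for $l\le i$. In $\U\cap L$ the ambient dimension drops to $m-j=i$, the link pairs are unchanged, but each stratum dimension drops by $j$. Now the bookkeeping bound \emph{for the top module} of $\U\cap L$ reads $0\le k-j\le m-j$ and $(m-j)-3(k-j)\le c\le (m-j)-(k-j)$, which unwinds (with $j=m-i$) to $m-i\le k\le m$ and $3m-3k-2i\le c\le m-k$. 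So the factor of $3$ and the restriction $k\ge m-i$ are artifacts of passing to the hyperplane section, not of any refined chain combinatorics.
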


\begin{proof}

\textbf{Step 1}: We need a lemma

\begin{lem}
For $0\leq i\leq m$, $\delta_{n,i}(\U)$ depends on $  \delta _{n,{b-1}}(S^{2m-2{k}+1}\setminus K^{2m-2k-1})$ for the range

$0\leq k\leq m$ and

$i-3k \leq b-1\leq m-k$.

\end{lem}

According to the proof of the theorem,
$$\delta_{n,i}(\U) \leq
\sum\limits_{p+q=i+1,} \sum\limits_{0\leq k_0< k_1,...< k_p\leq m,}  \sum\limits_{a+b=q}H^a( Z_{j_p}^{k_p};H^b(S^{2m-2{k_p}+1}\setminus K^{2m-2{k_p}-1};R_n))$$ for $b \geq 1$,
,
 here $p+1$ is the number of intersection of $Y$'s with each other, $k_p\geq p$, $p+q=i+1$ and $a+b=q$.

By lemma 6.3 and UCT, $H^b(S^{2m-2{k}+1}\setminus K^{2m-2{k}-1};R_n)=H_{b-1}(S^{2m-2{k}+1}\setminus K^{2m-2{k}-1};R_n)$ is a finite dimensional $\K$-vector space for $b-1\leq m-k$ and is $0$ for $b-1\geq m-k+1$.
So we have $0\leq b-1\leq m-k$.

Also $H^a( Z_{j_p}^{k_p};-)$ is nontrivial only when $0\leq a\leq 2 k_p$ since $Z_{j_p}^{k_p}$ is a complex $ k_p$-dimensional manifold.  Thus we have 
$p+a+b=p+q=i+1$ and so $0\leq a=i+1-p-b \leq 2 k_p$. 

Since $p\leq k_p$, we have $i-3k_p\leq b-1\leq m-k_p$. Forget the index of intersections and write $k$ for $k_p$. Then, $0\leq k\leq m$ and
$i-3k \leq b-1\leq m-k$.

\bigskip

\textbf{Step 2}: Apply Lefschetz hyperplane section theorem.

Let $1\leq i=:m-j\leq m$ be fixed. We are looking for the range of $k$ and $b-1$. Let $L\cong \mathbb{CP}^{m-j+1}$ be a generic
codimension j linear subspace of $\CP$ transversal to all strata of $V\cup H$. By transversality, $V\cap L$ is an $i$ dimensional, degree $d$, reduced hypersurface in $L$. And 
it is transversal to the hyperplane at infinity $H \cap L$. We consider the Whitney stratification of the pair $(L, V\cap L)$ which is induced from the pair
$(\CP, V)$. The strata in $V\cap L$ are the strata in $V$ intersecting $L$.

By Lefschetz hyperplane section theorem, $\U\cap L \rightarrow \U$ is an $m-j+1$ equivalence, i.e., the homotopy type of $\U$ is obtained from $\U\cap L$ by adding cells of dimension greater than $m-j+1$. So we have isomorphisms
 $H_l(\U\cap L;\Kt)\rightarrow H_l(\U;\Kt)$ for $l\leq m-j=i$. Note that 
 $H_i(\U\cap L;\Kt)$ is the top torsion higher-order Alexander module of $\U\cap L$. So we can apply the Lemma 8.3 in step 1 to it.
In this case, $\U\cap L$ has dimension $m-j$ and the link pair of stratum $S\cap L$ in $V\cap L$ is the same as the link pair of the stratum $S$ in $V$, while dim$S\cap V=\text{dim}S-j$. So if dim$S=k$, then dim$S\cap L=k-j$. 

Apply Lemma 8.3 on $\U\cap L$, get $0\leq k-j\leq m-j$ and $m-j-3(k-j) \leq b-1\leq m-j-(k-j)$.

Therefore, we get 
$m-i\leq k\leq m$
and $m+2j-3k=3m-3k-2i\leq c\leq m-k$ where $c=b-1$.

\end{proof}

Recall that if we choose a splitting $\xi$ to identify $R_n$ with $\mathbb{K}_n[t^{\pm 1}]$, we can define \newline
$\mathcal{A}_{n,i}^{\xi}(\U) = H_i(\U;\mathbb{K}_n[t^{\pm 1}])$. 

\begin{rmk}\rm
If we choose a splitting $\xi$ to identify $R_n$ with $\mathbb{K}_n[t^{\pm 1}]$, the higher-order Alexander polynomial $p_{n,i}^{\xi}(t)$ is the order of $H_i(\U;\mathbb{K}_n[t^{\pm 1}])$.

 Then the $n$-th order degree $\delta_{n,i}(\U)$ is the degree of $p_{n,i}^{\xi}(t)$.
\end{rmk}

 Recall that the higher-order Alexander polynomial depends on the choice of splitting. However, the degree of a higher-order Alexander polynomial is the same regardless of the choice of splitting, so the higher-order degree is well-defined in general.

\begin{cor} Fix a splitting $\xi$ to identify $R_n$ with $\Kt$, which matches all the local coefficients $\kt$, that is, there is a map $\K \xrightarrow{g} k_n$ as explained in section 7.

Then the prime factors of $H_i(\U,\Kt)$ (i.e. the roots of the higher-order Alexander polynomial $p_{n,i}^{\xi}(t)$ of  \ $\U$) are the roots of $g(q_{n,c}^{\xi}(t))$ where $q_{n,c}^{\xi}(t)$ are the higher-order Alexander polynomials of 
the local links $S^{2m-2{k}+1}\setminus K^{2m-2{k}-1}$ and $g:\kt \rightarrow \Kt$ maps the coefficients with t unchanged. The ranges for $k$ and $c=b-1$ are given $m-i\leq k\leq m$ and $3m-3k-2i\leq c\leq m-k$.

\end{cor}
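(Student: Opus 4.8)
The plan is to extract the statement about prime factors (roots of the higher-order Alexander polynomial) from the chain of spectral sequence arguments already established in Sections 6 and 7, tracking not merely ranks but the actual torsion structure over $\Kt$. Recall that over the (non-commutative) PID $\Kt$ any finitely generated torsion module is a direct sum of cyclic modules $\Kt/(r(t))$, and its order $p_{n,i}^{\xi}(t)$ is the product of these $r(t)$; the ``roots'' are the prime factors of the order ideal. So the goal is a divisibility statement: every prime factor of $p_{n,i}^{\xi}(t)$ divides (up to units) $g\bigl(q_{n,c}^{\xi}(t)\bigr)$ for some local link in the stated range of $(k,c)$.

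First I would reduce from $\U$ to the boundary $X=\partial N(V)$ exactly as in Section 6: since $H_i(X;\Kt)\to H_i(\U;\Kt)$ is onto for $i\le m$, the torsion module $H_i(\U;\Kt)$ is a quotient of $H_i(X;\Kt)$, and a quotient of a torsion module over a PID has order dividing the original order; hence every prime factor of $p_{n,i}^{\xi}(\U)$ is a prime factor of the corresponding polynomial for $X$. Next, I would feed the Mayer--Vietoris spectral sequence (Lemma 6.1) and the Leray spectral sequence for the link fibrations (Lemma 6.2) into the cohomology groups. The key point is that in a first-quadrant spectral sequence of $\Kt$-modules converging to $H^{*}(X;R_n)$, each $E_\infty$ term is a subquotient of the corresponding $E_1$ (or $E_2$) term, and each associated graded piece of the abutment is such a subquotient; over the PID $\Kt$ a subquotient of a torsion module has order dividing the original order, and the order of an extension divides the product of the orders. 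Therefore the prime factors of $H^{i}(X;R_n)$ are among the prime factors of the various $H^{a}\bigl(Z_{j_p}^{k_p};\mathcal H^{b}(S\setminus K;R_n)\bigr)$. Since $Z_{j_p}^{k_p}$ is homotopy equivalent to the finite CW complex $W^{k_p}$, its cohomology with the local system $\mathcal H^b$ is computed by a finite chain complex built out of copies of the stalk $H^b(S\setminus K;R_n)$, so again by the PID argument the prime factors of $H^{a}(Z_{j_p}^{k_p};\mathcal H^b(S\setminus K;R_n))$ lie among those of $H^b(S\setminus K;R_n)$.

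Then I would invoke the Ext computation of Section 7: by Lemma 6.5 together with the explicit calculation $\operatorname{Ext}^1_{r_n}\bigl(k_n[t^{\pm1}]/(p_i(t)),R_n\bigr)\cong \Kt/(\tilde p_i(t))$ with $\tilde p_i(t)=g(p_i(t))$ and $\operatorname{Ext}^{\ne 1}=0$, we get $H^{s+1}(S\setminus K;R_n)\cong \bigoplus_i \Kt/(g(p_i(t)))$, whose order is $\prod_i g(p_i(t)) = g\bigl(q_{n,s}^{\xi}(t)\bigr)$ (using that $g$ is a ring homomorphism sending $t$ to $t$, so $g$ of a product is the product of the $g$'s). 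Hence the prime factors of $H^b(S\setminus K;R_n)$ are exactly the prime factors of $g\bigl(q_{n,b-1}^{\xi}(t)\bigr)$. Combining, with $c=b-1$ and the index bookkeeping already done in Corollary 8.2 (which pins down $m-i\le k\le m$ and $3m-3k-2i\le c\le m-k$), every prime factor of $p_{n,i}^{\xi}(\U)$ divides $g\bigl(q_{n,c}^{\xi}(t)\bigr)$ for some local link pair with $(k,c)$ in that range. Finally I would pass from $H^{i+1}(X;\Kt)$ back to $H_i(X;\Kt)$ via the universal coefficient theorem over the PID $\Kt$ (as in the proof of Theorem~\ref{local-bound}): $H^{i+1}\cong \mathrm{free}\,H_{i+1}\oplus \mathrm{torsion}\,H_i$, so the torsion of $H_i$ injects into $H^{i+1}$ and its order divides the order of $H^{i+1}$.

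The main obstacle I anticipate is the bookkeeping of the involution and the direction of the maps: all the homological statements are stated for homology with $R_n$ coefficients as right modules, while the spectral sequences and the Ext computation are phrased cohomologically, so I must be careful that ``order ideal'' and ``prime factor'' behave correctly under the involution $t\mapsto t^{-1}$ (or whatever convention turns left modules into right modules) and under the duality $H^{n-p}(X,\partial X)\cong H_p(X)$ — but since we only claim a statement about \emph{roots} of polynomials, and the relevant polynomials are determined up to units and up to the substitution $t\mapsto t^{-1}$, these ambiguities do not affect the final assertion. A secondary point requiring care is that a priori the local links need not have $\beta_1=1$, so $H_b(S\setminus K;R_n)$ might have a free part; however Lemma 6.3 together with the fibration argument shows these modules are finite-dimensional $\K$-vector spaces, hence torsion over $\Kt$, which is exactly what makes the Ext computation and the PID divisibility arguments go through.
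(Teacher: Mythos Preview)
Your proposal is correct and follows essentially the same route as the paper: reduce from $\U$ to $X$, track prime factors through the Mayer--Vietoris and Leray spectral sequences (Lemmas 6.1--6.2), use the Ext computation of Section 7 to identify the prime factors of $H^b(S\setminus K;R_n)$ with those of $g(q_{n,b-1}^{\xi}(t))$, and then invoke the range restriction from Corollary 8.2. If anything, you are more explicit than the paper about \emph{why} prime factors pass through subquotients and extensions over the PID $\Kt$, and about the UCT step linking $H_i$ to $H^{i+1}$, which the paper simply asserts.
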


\begin{proof}
From the discussion in section 6, the prime factors of $H_i(\U;\mathbb{K}_n[t^{\pm 1}])$ are among the prime factors of $H_i(X;\mathbb{K}_n[t^{\pm 1}])$.
By Lemma 6.1, the prime factors of $H_i(X;\mathbb{K}_n[t^{\pm 1}])$ are among the prime factors of 
$ H^q(Y_{j_0}^{k_0}\cap ...\cap Y_{j_p}^{k_p};R_n)$ where $0\leq k_0 < ...< k_p \leq m$ and $p+q=i+1$.

By Lemma 6.2, the prime factors of $ H^q(Y_{j_0}^{k_0}\cap ...\cap Y_{j_p}^{k_p};R_n)$ are among the prime factors of $H^b(S^{2m-2{k_p}+1}\setminus K^{2m-2{k_p}-1};R_n)$ for $b\leq q$. 

By Lemma 6.3 and the calculations in section 7, the prime factors of $H^b(S^{2m-2{k_p}+1}\setminus K^{2m-2{k_p}-1};R_n)$ are among the prime factors of $\text{Ext}_{r_n}^1 (\mathcal{A}_{n,b-1}(S\setminus K),R_n)$, which corresponds to the roots of $g(q_{n,b-1}^{\xi}(t))$ where $q_{n,b-1}^{\xi}(t)$ are the higher-order Alexander polynomials of 
the local links $S^{2m-2{k}+1}\setminus K^{2m-2{k}-1}$, and $g:\kt \rightarrow \Kt$ maps the coefficients with t unchanged. 
At last, by corollary 8.3, the ranges for $k$ and $c$ are $m-i\leq k\leq m$ and $3m-3k-2i\leq c\leq m-k$.

\end{proof}

\section{Compute Higher-Order Degrees $\delta_n(U)$ Using Fox's Free Calculus}

Since $\mathcal{A}_{n,1}^{\Z}(\U)$ depends on the fundamental group $G$ only, whereas the higher-dimensional $\mathcal{A}_{n,i}^{\Z}(\U)$ also depends on the cell-structure of the cover $\U_{\Gamma_n}$, $\delta_{n,1}(\U)$ (denoted as $\delta_n(\U)$) can be calculated based on the fundamental group, using the method of Fox calculus (See \cite{FC}).

Let $\U$ be a hypersurface complement and suppose that $G=\pi_1(\U)$ is a finitely presented group with presentation 
$$G=\langle x_1,x_2,...,x_k \ | \ r_1,...r_l\rangle,$$  Let
$F=\langle x_1,x_2,...,x_k \rangle$ be the free group on $k$ generators and $\chi :F \twoheadrightarrow G$. For each $x_i$ we have a mapping called the $i$-th free derivative $\frac{\partial}{\partial x_i}:F \rightarrow \Z F$, and it is determined by 

$\frac{\partial x_i}{\partial x_j}=\delta_{i,j}$   and $\frac{\partial (uv)}{\partial x_i}=\frac{\partial u}{\partial x_i}+u\frac{\partial v}{\partial x_i}$. It is then follows that $\frac{\partial u^{-1}}{\partial x_i}=-u^{-1}\frac{\partial u}{\partial x_i}$.

The map $\chi: F\rightarrow G$ extends by linearity to a map $\chi: \Z F\rightarrow \Z G$. The matrix
\begin{scriptsize}
\[ H_1(\U,u_0;\Z G)=
\left(\frac{\partial r_i}{\partial x_j}\right)_{i,j}^{\chi} 
= \left( \begin{array}{cccc}
\chi \left(\frac{\partial r_1}{\partial x_1}\right) & \chi \left(\frac{\partial r_1}{\partial x_2}\right) & ... & \chi \left(\frac{\partial r_1}{\partial x_k}\right) \\
\chi \left(\frac{\partial r_2}{\partial x_1}\right) & \chi \left(\frac{\partial r_2}{\partial x_2}\right)  & ... & \chi \left(\frac{\partial r_2}{\partial x_k}\right) \\
\vdots & \vdots & \ddots & \vdots\\
\chi \left(\frac{\partial r_l}{\partial x_1}\right) & \chi \left(\frac{\partial r_l}{\partial x_2}\right)  & ... & \chi \left(\frac{\partial r_l}{\partial x_k}\right) 
 \end{array} \right)
\]
\end{scriptsize}
is called the Jacobian of the presentation $P$. This matrix is dependent on the presentation.
Here the columns correspond to the generators $x_j$, and the rows correspond to relations $r_i$.

If we allow elements of $G_r^{(n+1)}$ to be set equal to 1 in $\Z G$, we can also consider the above as a presentation matrix for $ H_1(\U,u_0;\Z \Gamma_n)$. Furthermore, since $R_n$ is a flat $\Z \Gamma_n$-module, we can also consider it to be a presentation matrix for $ H_1(\U,u_0;R_n)$. If we think of the matrix in this way, every non-zero element in $\Z \bar{\Gamma}_n$ has an inverse.

If we choose a splitting $\xi :\Z \rightarrow  \Gamma_n$, we can identify $R_n$ with $\Kt$. To obtain a presentation for $H_1(\U,u_0;\Kt)$ we must replace each entry in the above matrix with its image under the isomorphism $R_n\rightarrow \Kt$. Since $\Kt$ is a non-commutative ring, we must be careful when writing elements where $t$ is not originally on the right. The splitting $t\rightarrow x_1$ induces an automorphism of $\bar{\Gamma}_n$ by $g \rightarrow t^{-1}gt=g^t \in \bar{\Gamma}_n$. For example, $x_1 x_2$ will become $tx_2=x_1x_2x_1^{-1}t$. 
The next step to find $\delta_n(\U)$ is diagonalizing the matrix, which is possible since $\Kt$ is a PID.

\begin{lem} 

Here are some valid operations:(\cite{Ha}, Lemma 9.2)\rm 

1) Multiply columns on the right. Note that the matrix is a presentation of a left module and columns correspond to generators.

2) Interchange two columns.

3) Add a constant multiple of one column (multiply on the right) to another column.

4) Multiply rows on the left. Note that rows correspond to relations.

5) Interchange two rows.

6) Add a constant multiple of one row (multiply on the left) to another row.

7)\begin{tiny}
$ P \Leftrightarrow 
\left( \begin{array}{cc}
P \\
0
\end{array} \right)
$
\end{tiny} that is if we have one row only consists of 0's, we can eliminate that row.

8)\begin{tiny}
$ P \Leftrightarrow 
\left( \begin{array}{cc}
P & 0\\
* & 1
\end{array} \right)
$\end{tiny} this means if we have one column with only one entry is 1 but others are 0, then we can eliminate that row and column.

9) \begin{tiny}
$ P \Leftrightarrow 
\left( \begin{array}{cc}
P & *\\
0 & 1
\end{array} \right)
$\end{tiny} this means if we have one row with only one entry is 1 but others are 0, then we can eliminate that row and column.
\end{lem}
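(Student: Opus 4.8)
The plan is to show that each move 1)--9) replaces a presentation matrix of a module over $R:=\Kt$ by a presentation matrix of an isomorphic module, so that the presented module is preserved up to isomorphism (hence so are its order ideal and the degree $\delta_n(\U)$ computed from it). First I would fix the dictionary: an $l\times k$ matrix $P$ over $R$ presents the left $R$-module $M_P:=R^k/R^lP$, where $R^l$ and $R^k$ are free left modules of row vectors and the relation map $R^l\to R^k$ is \emph{right} multiplication $v\mapsto vP$, which is left $R$-linear precisely because $P$ acts on the right; this is compatible with the statement just above that the Jacobian is a presentation matrix for $H_1(\U,u_0;R_n)$. Under this dictionary the ``column'' moves 1)--3) become right multiplication $P\mapsto PQ'$ and the ``row'' moves 4)--6) become left multiplication $P\mapsto QP$, and keeping this left/right split straight is the point to track carefully throughout, since $R$ is noncommutative.

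For the row moves 4)--6): if $Q\in GL_l(R)$ then $R^l(QP)=(R^lQ)P=R^lP$, so $M_{QP}=M_P$ literally; moves 4), 5), 6) are left multiplication by, respectively, a diagonal matrix with one unit entry, a transposition matrix, and an elementary matrix $I+\mu E_{ij}$ with $i\neq j$ (here $E_{ij}$ is the matrix unit), and all of these lie in $GL_l(R)$ --- the elementary one has inverse $I-\mu E_{ij}$, and the admissible diagonal entries are the nonzero monomials $u\,t^{a}$ with $u\in\K^{\times}$, $a\in\Z$, which are exactly the units of $R$. For the column moves 1)--3): if $Q'\in GL_k(R)$ then right multiplication by $Q'$ carries $R^lP$ isomorphically onto $(R^lP)Q'$ and so descends to an isomorphism $M_P\xrightarrow{\ \sim\ }M_{PQ'}$; moves 1), 2), 3) are right multiplication by the analogous invertible matrices of $GL_k(R)$. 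Move 7) is immediate, since a zero row contributes $0$ to the row span $R^lP$ and deleting it alters neither the submodule nor $M_P$.

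Moves 8) and 9) are the pivot-elimination steps, and I would derive each from an already-justified move together with the splitting off of a trivial summand. For 9), starting from the matrix $\left(\begin{smallmatrix}P & * \\ 0 & 1\end{smallmatrix}\right)$ whose last row is $(0,\dots,0,1)$, I apply move 6) once for each of the other rows, adding to it the left multiple of the last row that kills its final entry; this clears the last column without disturbing the $P$-block and yields $\left(\begin{smallmatrix}P & 0 \\ 0 & 1\end{smallmatrix}\right)$, which presents $M_P\oplus (R/R)=M_P$. Move 8) is the transpose argument: starting from the matrix in 8), whose last column has its only nonzero entry equal to $1$, use move 3) to add to each other column the right multiple of the last column that kills its entry in the final row, again reaching the block-diagonal form. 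The only real obstacle is the left/right bookkeeping forced by noncommutativity of $R$: row operations must be realized on the left and column operations on the right, the elementary matrices used to justify 8) and 9) must act on exactly the side that makes them instances of moves 3) and 6), and each ``constant'' matrix invoked must be checked to be a genuine unit of $R$ --- which is precisely why it matters that $\Kt$ is a skew Laurent polynomial ring over the \emph{division ring} $\K$, so that its units are exactly the nonzero monomials. Granting 1)--9), one then combines them with Euclidean division in the (noncommutative) PID $R=\Kt$ to diagonalize $P$, reads off $H_1(\U,u_0;R)$ as a direct sum of cyclic modules $R/(p_i(t))$ plus a free part, and so computes $\delta_n(\U)$ as $\sum_i\deg p_i(t)$ when this is finite.
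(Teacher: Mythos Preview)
Your argument is correct. The paper does not actually prove this lemma: it is stated with the citation ``\cite{Ha}, Lemma~9.2'' and no proof is supplied, so there is nothing in the paper to compare your approach against. What you have written is the standard verification --- realize row moves as left multiplication by $Q\in GL_l(R)$ (which leaves the row span $R^lP$ unchanged), realize column moves as right multiplication by $Q'\in GL_k(R)$ (which induces an isomorphism $M_P\cong M_{PQ'}$), observe that a zero row contributes nothing, and reduce moves 8) and 9) to the block-diagonal form $\left(\begin{smallmatrix}P&0\\0&1\end{smallmatrix}\right)$ by clearing with moves 3) and 6) --- and it goes through cleanly over the noncommutative PID $R=\Kt$.

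One small refinement: you identify the units of $R$ as the nonzero monomials $u\,t^a$ with $u\in\K^\times$, which is correct, but note that in the paper's applications moves 1) and 4) are invariably invoked with scalars lying in $\K^\times$ itself (e.g.\ $(1-y)^{-1}$, $(1-u)^{-1}$, $(z^2-z+1)^{-1}$, $z^2$), i.e.\ the case $a=0$. Your more general statement is still right and does no harm. The closing remarks about diagonalization via the Euclidean algorithm are not part of the lemma proper (that is the content of the subsequent Theorem~9.3, also cited without proof), but they correctly frame how the lemma is used.
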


\begin{fact} $1-a$ is invertible in $\Kt$ for all $a \neq 1$ in $G/G'$. 
\end{fact}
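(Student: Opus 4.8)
The plan is to realize $1-a$ as an element of the very right divisor set that gets inverted in the construction of $\K$, so that its invertibility is built in. Recall from Section 3 that $\Kt\cong R_n=\Z\Gamma_n(\Z\bar{\Gamma}_n\setminus\{0\})^{-1}$, that $\K=\Z\bar{\Gamma}_n(\Z\bar{\Gamma}_n\setminus\{0\})^{-1}$ is a skew field, and that $\K$ sits inside $\Kt$ as the subring of elements of $t$-degree $0$. Since $\K\hookrightarrow\Kt$ is a ring homomorphism, it suffices to show that $1-a$ lies in $\Z\bar{\Gamma}_n\setminus\{0\}$: such an element is already invertible in $\K$, and its $\K$-inverse is then an inverse in $\Kt$ as well.

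So the two things to check are that $1-a\ne 0$ and that $1-a\in\Z\bar{\Gamma}_n$. The first is immediate: in any group ring $1-a=0$ forces $a=1$, contrary to hypothesis. For the second I must make sure the class $a$ really lies in $\bar{\Gamma}_n$. An $a\ne 1$ in $G/G'=H_1(\U)$ is the image of a loop $g\in\pi_1(\U)$ with $g\notin G'=G_r^{(1)}$; since $\psi$ factors through the torsion-free abelianization $G/G_r^{(1)}$ we have $G_r^{(n+1)}\subseteq G_r^{(1)}\subseteq\ker\psi$, so $\phi_n(g)\in\Gamma_n$ is nontrivial, and as soon as $g$ has zero total linking number, $\psi(g)=0$, its image lies in $\bar{\Gamma}_n=\ker\bar\psi$. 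In the Fox-calculus application the splitting $\xi\colon t\mapsto x_1$ is chosen precisely so as to absorb the linking-number part of each group element into an explicit power $t^{k}$, so that in the reductions furnished by the previous lemma the entries one actually needs to invert already have the form $1-a$ with $a\in\bar{\Gamma}_n$; granting this, $1-a\in\Z\bar{\Gamma}_n\setminus\{0\}$ and we are done by the first paragraph.

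The one place that requires care — and the only real obstacle — is exactly the last step of the second paragraph: one must be certain that the elements being inverted genuinely lie in $\K$, i.e. have trivial image under $\bar\psi$. This is because the units of $\Kt$ are exactly the monomials $ct^{j}$ with $0\ne c\in\K$: the $t$-span of a product is the sum of the $t$-spans (as $\K$ is a domain), so a unit must have $t$-span $0$, while conversely $ct^{j}$ with $c\ne 0$ is invertible since $\K$ is a skew field. Hence an element of nonzero total linking number maps to some $ct^{k}$ with $k\ne 0$, for which $1-ct^{k}$ has $t$-span $|k|\ge 1$ and is \emph{not} a unit; it is the role of the splitting $\xi$ to ensure that no such cancellation is ever invoked. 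Once that bookkeeping is in place, the argument of the first two paragraphs applies verbatim.
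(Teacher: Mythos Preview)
Your core argument is the same as the paper's: show that the image of $a$ in $\Gamma_n$ is nontrivial (because $G_r^{(n+1)}\subseteq G_r^{(1)}=G'$ and $a\notin G'$), so that $1-a$ is a nonzero element of $\Z\bar{\Gamma}_n$ and hence a unit in $\K\subset\Kt$. The paper's proof is exactly this, stated in two lines.

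Where you go further than the paper is in flagging the hidden hypothesis. You are correct that the Fact as literally stated is false: if $\psi(a)\neq 0$ then $1-a$ becomes $1-\bar a\,t^k$ with $k\neq 0$, which has $t$-span $|k|$ and is not a unit in $\Kt$. The paper's own proof silently assumes $a\in\bar{\Gamma}_n$ when it writes ``$1-a\neq 0$ in $\Z\bar{\Gamma}_n\subseteq\Z\Gamma_n$,'' and every application in the examples (the elements $y$, $z$, $x_{m-1}$, etc.) does have linking number zero. So your third paragraph is a genuine sharpening rather than a defect.

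One small point you should tighten: you write $G'=G_r^{(1)}$ without justification. This equality is exactly the statement that $G/G'=H_1(\U)$ is $\Z$-torsion-free; the paper invokes this explicitly (``Recall that for hypersurface complements, $H_1(\U;\Z)$ has no $\Z$-torsion''), and you should too, since without it $a\notin G'$ does not force $a\notin G_r^{(1)}$. Once that is said, the chain $a\notin G'=G_r^{(1)}\supseteq G_r^{(n+1)}$ gives $a\neq 1$ in $\Gamma_n$, and together with $\psi(a)=0$ you get $1-a\in\Z\bar{\Gamma}_n\setminus\{0\}$, which is what both you and the paper use.
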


\begin{proof}

Since $a \neq 1 \in G/G'=H_1(\U;\Z)$, $a \notin G'$. Recall that for hypersurface complements, $H_1(\U;\Z)$ has no $\Z$-torsion. Thus $G/G_r'=G/G'$ and we have
$G_r^{(n)}\subseteq G_r^{(n-1)}\subseteq ...\subseteq G_r^{(1)}=G'$. It follows that $a \notin G_r^{(n)}$ for
all $n \geq 1$. Therefore $a \neq 1$ in $\Gamma_n$ for all $n\geq 0$. Hence $1-a \neq 0$ in $\Z\bar{\Gamma}_n\subseteq \Z\Gamma_n$ and is
therefore invertible in $\mathbb{K}_n[t^{\pm 1}]$. This allows us to divide any column or row by the unit $1 -a$. 

\end{proof}

Similarly, if $a \neq 1$ in $G/G_r^l$ for some $l\leq n$, then $1-a$ is invertible in $\Kt$

\begin{theorem} (\cite{J}, Theorem 16, p.43)
The Jacobian matrix is equivalent to a diagonal presentation matrix of the form:

\begin{tiny}

\[ \left( \begin{array}{ccccc}
p_1(t) &0 & ...& 0& 0\\
0 & p_2(t)&...   & 0& 0\\
\vdots &\vdots&\ddots &\vdots &\vdots\\
0& 0 &\cdots &p_{u}(t)& 0
\end{array} \right)
\]\end{tiny}
\end{theorem}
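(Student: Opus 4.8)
The plan is to reduce the Jacobian to diagonal form by a non\-commutative analogue of Smith\-normal\-form reduction, carried out entirely with the operations listed in the Lemma on valid matrix operations above. The only structural input needed is that $\Kt=\K[t^{\pm1}]$ is a skew Laurent polynomial ring over the skew field $\K$, hence a non\-commutative principal (left and right) ideal domain which is, moreover, both left and right Euclidean: after multiplying an entry by a suitable power of $t$ --- a unit, and therefore an admissible column or row operation --- we may assume every entry lies in $\K[t]$, and there the leading coefficient is invertible, so one can write $a$ as $bq+r$ and as $q'b+r'$ with $r,r'$ of strictly smaller $t$-degree than $b$. This $t$-degree, valued in $\Z_{\geq 0}\cup\{-\infty\}$, is what forces the reduction to terminate.

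First I would peel off the superfluous column. Since each relator $r_i$ is trivial in $G$, Fox's fundamental identity gives $\sum_j \chi\bigl(\frac{\partial r_i}{\partial x_j}\bigr)(x_j-1)=r_i-1=0$ in $\Z G$; passing to $\Kt$, the columns $C_1,\dots,C_k$ of the Jacobian satisfy the right\-linear relation $\sum_j C_j\cdot\overline{(x_j-1)}=0$. Because the meridian generators are nontrivial in $G/G'$, each coefficient $\overline{(x_j-1)}$ is a unit of $\Kt$ by the Fact above that $1-a$ is invertible in $\Kt$; solving for one column, say $C_k$, as a right $\Kt$-combination of the others and subtracting it off (repeated use of operation~3) turns $C_k$ into the zero column displayed in the statement. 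It then remains to diagonalize the remaining block.

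For the diagonalization I would induct on the number of rows and columns. If the block is zero, stop. Otherwise pick a nonzero entry of least $t$-degree and bring it to the $(1,1)$ slot by row and column interchanges (operations~2 and~5). For $j\geq 2$ write $a_{1j}=a_{11}q_j+r_j$ with $\deg r_j<\deg a_{11}$ and replace $C_j$ by $C_j-C_1q_j$ (operation~3); for $i\geq 2$ write $a_{i1}=q_ia_{11}+r_i$ with $\deg r_i<\deg a_{11}$ and replace $R_i$ by $R_i-q_iR_1$ (operation~6). If any remainder is nonzero it has smaller $t$-degree than the current pivot, so move it into the $(1,1)$ slot and restart the clearing of the first row and column; since the pivot degree is a non\-negative integer that strictly drops at each restart, after finitely many passes the first row and column are zero off the pivot $p_1(t):=a_{11}$. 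Strike out that row and column --- discarding rows or columns that have become zero, or a pivot that has become a unit, by operations~7--9 --- and apply the inductive hypothesis to the smaller block; the successive pivots $p_1(t),\dots,p_u(t)$ are the entries in the stated diagonal form.

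The step I expect to be the main obstacle is controlling the interplay between row and column clearing forced by non\-commutativity. One cannot simply clear the first row, then clear the first column, and declare victory: column operations multiply on the right and row operations on the left, so the first row must be cleared by dividing with the quotient on the right and the first column by dividing with the quotient on the left, and one must check that clearing the first column by $R_i\mapsto R_i-q_iR_1$ does not reintroduce entries into the already\-cleared first row (it does not, since that row then has the form $(p_1,0,\dots,0)$ and $-q_iR_1$ is supported in the first column). Once this is arranged, termination is automatic from the strictly decreasing non\-negative $t$-degree of the pivot, and all that remains is bookkeeping with the admissible operations.
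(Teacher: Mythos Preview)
The paper does not supply its own proof of this theorem; it simply cites Jacobson's \emph{The Theory of Rings}, Theorem~16, p.~43, which is the standard Smith normal form for matrices over a (left and right) principal ideal domain. Your second step---the Euclidean reduction with $t$-degree as the size function---\emph{is} Jacobson's argument, and it is correct: $\Kt$ is a two-sided Euclidean domain because the leading coefficient of any nonzero element lies in the skew field $\K$ and is therefore a unit, so both left and right division with remainder work, and your termination argument via strictly decreasing pivot degree goes through.

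Your first step, however, overclaims. You assert that \emph{every} $\overline{x_j-1}$ is a unit in $\Kt$ by Fact~9.2. But Fact~9.2 (read carefully---its proof places $1-a$ inside $\Z\bar\Gamma_n$) only applies to elements $a$ with $\bar\psi(a)=0$, i.e., linking number zero. At least one generator must have nonzero linking number, since $\psi$ is onto $\Z$; for that generator $1-x_j$ has positive $t$-degree in $\Kt$ and is certainly not a unit. In the paper's worked examples the presentation is always rewritten so that $x_1$ has linking number $1$, and $1-x_1=1-t$ is not invertible.

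This does not kill your conclusion. The Fox identity $\sum_j C_j(x_j-1)=0$ still gives a nontrivial right-$\Kt$-relation among the columns (the coefficients are not all zero because $\Gamma_n\neq 1$), so the column module has rank at most $k-1$ over the Ore domain $\Kt$, and after diagonalization there is at least one zero column. That there is \emph{exactly} one---equivalently, that all $p_i(t)\neq 0$---is not a consequence of the matrix algebra alone; the paper obtains it from the independently proved fact that $H_1(\U;\Kt)$ is torsion (Corollary~\ref{finite}) together with Proposition~4.8, which forces $\mathrm{rk}_{\mathcal K_n}H_1(\U,u_0;\Kt)=1$. Your column-solving manoeuvre does work whenever some generator has linking number zero and is nontrivial in $\Gamma_n$ (which is how the paper carries out the examples, e.g.\ Example~9.7), but it is not available in full generality, for instance for an irreducible hypersurface at level $n=0$.
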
 

Notice that the last column can not be eliminated. We obtain $$ H_1(\U,u_0;\Kt)\cong \Kt \oplus \frac{\Kt}{p_1(t)}\oplus \frac{\Kt}{p_2(t)}\oplus\cdots \oplus \frac{\Kt}{p_u(t)}.$$
To find $H_1(\U; \mathbb{K}_n[t^{\pm 1}])$,
consider the long exact sequence of a pair:
$$ 0\rightarrow H_1(\U; \mathbb{K}_n[t^{\pm 1}])\rightarrow H_1(\U, u_0; \mathbb{K}_n[t^{\pm 1}])\rightarrow H_0( u_0; \mathbb{K}_n[t^{\pm 1}]).$$

Since $ H_1(\U; \mathbb{K}_n[t^{\pm 1}])$ is a torsion module and $ H_0( u_0; \mathbb{K}_n[t^{\pm 1}])$ is a free module,
we conclude that $$ H_1(\U; \mathbb{K}_n[t^{\pm 1}])\cong \frac{\Kt}{p_1(t)}\oplus \frac{\Kt}{p_2(t)}\oplus\cdots \oplus \frac{\Kt}{p_u(t)}.$$ Therefore, the higher-order degree is  $$\delta_n(\U) =\text{deg} \prod\limits_{1\leq i\leq u}p_i(t).$$

There are some other ways to calculate the higher-order degrees, let us illustrate that by showing an example.
\begin{eg}\rm
$$G=\langle a, b \ | \ aba^{-1}b^{-1}\rangle \cong \Z \times \Z$$
 is the fundamental group of an affine curve complement, where the curve is two lines intersecting transversally. $\delta_n=0$ for all $n\geq 0$.

\end{eg}
\begin{proof}

\textbf{Method 1}: (by definition of $H_1(\U;\Z \Gamma_n)$)

$G'=[G,G]=1$, and since $G$ has no torsion elements, $G_r^{(1)} =G'=1$.
Since $G_r^{(1)}\supseteq G_r^{(n+1)}$ for all $n\geq 0$, $G_r^{(n+1)}=1$ .
$$H_1(\U;\Z \Gamma_n)=G_r^{(n+1)}/[G_r^{(n+1)},G_r^{(n+1)}]=0,$$ so $H_1(\U;\Kt)=0$ and $\delta_n=0$ for all $n\geq 0$.

\bigskip
\textbf{Method 2}: (by the following proposition)

\begin{prop} (\cite{HP}, proposition 3.10)
Suppose $C$ is defined by a weighted homogeneous polynomial $f(x,y)=0$ in $\C ^2$, and assume that either $n>0$ or $\beta _1(U)>1$. Then we have:
$$\delta_n=\mu(C,0)-1,$$ where $\mu(C,0)$ is the Milnor number associated to the singularity germ at the origin. If $\beta _1(\U)>1$, then $\delta_0=\mu(C,0)$.

\end{prop}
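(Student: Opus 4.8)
The plan is to exploit the global Milnor fibration attached to the weighted homogeneous polynomial $f$. Since $f$ is weighted homogeneous, the map $f\colon\U=\C^2\setminus C\to\C^*$ is a locally trivial fibration (the global Milnor fibration), whose fibre is the Milnor fibre $F$ of the germ $(C,0)$; and since $(C,0)$ is a plane curve germ, $F$ is connected and homotopy equivalent to a bouquet $\bigvee_{\mu}S^1$ of $\mu=\mu(C,0)$ circles, so $\pi_1(F)$ is free of rank $\mu$ and $\chi(F)=1-\mu$. The induced map $f_*\colon G=\pi_1(\U)\to\pi_1(\C^*)=\Z$ sends a meridian $\gamma_i$ of an irreducible component $C_i$ to $1$ (here we use that $C$ is reduced, so $f$ vanishes to order one along $C_i$); hence $f_*$ coincides, on $H_1(\U)$ and therefore on $\pi_1(\U)$, with the total linking number homomorphism $\psi$. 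Pulling $f$ back along the universal cover $\exp\colon\C\to\C^*$ and using that $\C$ is contractible, I get that the infinite cyclic cover $\widetilde{\U}$ of $\U$ (the cover corresponding to $\ker\psi$) satisfies $\widetilde{\U}\simeq F\simeq\bigvee_{\mu}S^1$, with $\pi_1(\widetilde{\U})=\ker\psi$ free of rank $\mu$.

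By the identity $\delta_{n,i}(\U)=\text{rk}_{\mathbb{K}_n}H_i(\widetilde{\U};\mathbb{K}_n)$ recorded earlier, it suffices to compute $H_1(\widetilde{\U};\mathbb{K}_n)$, where the $\bar\Gamma_n$-coefficient system on $\widetilde{\U}$ is the one induced by the composite $\widetilde\phi\colon\pi_1(\widetilde{\U})=\ker\psi\hookrightarrow G\xrightarrow{\phi}\Gamma_n$, whose image lies in $\bar\Gamma_n=\ker\bar\psi$. Since $\Z$ is torsion-free abelian, Corollary \ref{co} gives $G_r^{(1)}\subseteq\ker\psi$, hence $G_r^{(n+1)}\subseteq G_r^{(1)}\subseteq\ker\psi$ for all $n$, so $\widetilde\phi$ is precisely the quotient $\ker\psi\twoheadrightarrow\ker\psi/G_r^{(n+1)}\cong\bar\Gamma_n$. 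The key point is to show that $\widetilde\phi$ is nontrivial exactly under the stated hypothesis. For $n=0$, since $G_r^{(1)}=G'$ (because $H_1(\U)$ is torsion-free), $\bar\Gamma_0=\ker\psi/G'$ is the image of $\ker\psi$ in $H_1(\U)=\Z^{\beta_1(\U)}$, namely $\Z^{\beta_1(\U)-1}$, so $\widetilde\phi$ is nontrivial iff $\beta_1(\U)>1$. For $n\geq1$ the freeness of $\ker\psi$ enters: $\ker\psi/[\ker\psi,\ker\psi]\cong\Z^{\mu}$ is torsion-free, so an element of $G'$ having a nonzero power in $[G',G']\subseteq[\ker\psi,\ker\psi]$ already lies in $[\ker\psi,\ker\psi]$; hence $G_r^{(n+1)}\subseteq G_r^{(2)}\subseteq[\ker\psi,\ker\psi]$, so $\bar\Gamma_n$ surjects onto $\Z^{\mu}\neq0$ and $\widetilde\phi$ is nontrivial (the germ being singular, $\mu\geq1$).

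When $\widetilde\phi$ is nontrivial, Proposition \ref{a2} applied to $\widetilde{\U}$, the PTFA group $\bar\Gamma_n$ and its skew field $\mathbb{K}_n$ gives $H_0(\widetilde{\U};\mathbb{K}_n)=0$. As $\widetilde{\U}$ has the homotopy type of the finite CW complex $\bigvee_{\mu}S^1$, one can compute with its cellular chain complex carrying the induced $\bar\Gamma_n$-system; tensoring the resulting complex $0\to(\Z\bar\Gamma_n)^{\mu}\xrightarrow{\partial_1}\Z\bar\Gamma_n\to0$ with the flat module $\mathbb{K}_n$ and using $H_0=\text{coker}\,\partial_1=0$ forces $\partial_1$ to be onto, of rank one, whence
$$\delta_n(\U)=\text{rk}_{\mathbb{K}_n}H_1(\widetilde{\U};\mathbb{K}_n)=\text{rk}_{\mathbb{K}_n}\ker\partial_1=\mu-1.$$
(Alternatively one gets ``$\leq$'' at once from Proposition \ref{a1}, with ``$\geq$'' from the vanishing of $H_0$.) This settles all cases allowed by the hypothesis, i.e. $n>0$, or $n=0$ with $\beta_1(\U)>1$. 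In the remaining case, $n=0$ and $C$ irreducible (so $\beta_1(\U)=1$), $\widetilde\phi$ is trivial, hence $\bar\Gamma_0=0$ and $\mathbb{K}_0=\Q$; then $H_1(\widetilde{\U};\mathbb{K}_0)=H_1(\bigvee_{\mu}S^1;\Q)=\Q^{\mu}$ and $\delta_0(\U)=\mu$, which recovers the classical statement that the Alexander polynomial of an irreducible weighted homogeneous curve has degree $\mu(C,0)$.

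The main obstacle is the group-theoretic step establishing $G_r^{(n+1)}\subsetneq\ker\psi$ for $n\geq1$: it requires combining the inclusion $G_r^{(1)}=G'\subseteq\ker\psi$ (which itself rests on $H_1(\U)$ being torsion-free for curve complements) with the freeness of $\ker\psi=\pi_1(F)$, so that the torsion-free abelianization $\ker\psi/[\ker\psi,\ker\psi]\cong\Z^{\mu}$ lets one absorb the rational derived subgroup $G_r^{(2)}$ into the ordinary commutator $[\ker\psi,\ker\psi]$. Once $\widetilde\phi$ is known to be nontrivial, the remainder is a formal rank count over the skew field $\mathbb{K}_n$ together with the Milnor fibration theorem for weighted homogeneous polynomials.
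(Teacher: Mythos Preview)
Your proof is correct and is precisely the natural argument via the global Milnor fibration; the paper itself does not prove this proposition but only cites it from \cite{HP}, so there is no in-paper proof to compare against. Your identification $\widetilde{\U}\simeq F\simeq\bigvee_{\mu}S^1$, the verification that $\widetilde\phi\colon\ker\psi\to\bar\Gamma_n$ is nontrivial under the hypothesis (using freeness of $\ker\psi=\pi_1(F)$ to force $G_r^{(2)}\subseteq[\ker\psi,\ker\psi]$ when $n\geq1$), and the Euler-characteristic rank count over $\mathbb{K}_n$ after invoking Proposition~\ref{a2} are all sound.

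Two remarks. First, you correctly detected and repaired a typo in the statement: the final sentence should read ``If $\beta_1(\U)=1$'' rather than ``$>1$'', as confirmed by the paper's own Examples~9.4 and~9.8 (in the former $\beta_1=2$, $\mu=1$, and $\delta_0=0=\mu-1$; in the latter $\beta_1=1$, $\mu=2$, and $\delta_0=2=\mu$). Second, your parenthetical assumption ``the germ being singular, $\mu\geq1$'' is genuinely needed in the case $n>0$, $\beta_1(\U)=1$: for a smooth irreducible weighted homogeneous curve one has $\mu=0$, $\ker\psi=1$, $\bar\Gamma_n=0$, and $\delta_n=0\neq\mu-1$, so the formula fails. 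This hypothesis is implicit in the original statement (and automatic when $\beta_1>1$, since the branches meet at the origin), but it is good that you made it explicit.
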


Notice that $G$ is the fundamental group of the complement of the curve $C=\{x^2+y^2=0\}$.  $H_1(\U)\cong \Z \times \Z$, so $\beta _1(\U)>1$. By the proposition, $\delta_n=\mu(C,0)-1$. On the other hand, for a weighted homogeneous polynomial with degree $d$ and $s$ variable with weights $w_1, ..., w_s$, the Milnor number is (See \cite{DB})
$$\mu(C,0)=\frac{(d-w_1)\cdots(d-w_s)}{w_1\cdots w_s}.$$ In our case, $d=2$, $w_1=w_2=1$, so $\mu(C,0)=\frac{(2-1)(2-1)}{1\cdot 1}=1$. Thus $\delta_n=1-1=0$.

\bigskip
\textbf{Method 3}: (by Fox Calculus)

For simplicity, we find a new presentation of $G$ such that one generator has linking number one and the others are zero. Let $y=ba^{-1}$, that is $b=yx$. The relation $aba^{-1}b^{-1}=xyxx^{-1}x^{-1}y^{-1}=xyx^{-1}y^{-1}$ stays the same. So $G=\langle x, y \ | \ xyx^{-1}y^{-1}\rangle $.
Let $r_1= xyx^{-1}y^{-1}$, then $\frac{\partial r_1}{\partial x}=1-y$ and $\frac{\partial r_1}{\partial y}=x-1$. Hence, we get a presentation for $H_1(\U,u_0;\Z G)$ as a left $\Z G$-module.

$$\left( \begin{array}{cc}
1-y & x-1
\end{array} \right).$$

By the argument in method 1, $G_r^{(n+1)}=1$ for all $n\geq 0$, so $y \notin G_r^{(n+1)}$ and $y\neq 1$ in $\Gamma_n=G/G_r^{(n+1)}$. Thus $1-y\neq 0$ in $\Z \Gamma_n$ and is invertible in $\mathbb{K}_n$ for all $n\geq 0$.

To obtain a presentation for $H_1(\U,u_0;\Kt)$, we choose the splitting that maps $t$ to $x$. Then we have
$$\left( \begin{array}{cc}
1-y & t-1
\end{array} \right)$$

Since $1-y$ is invertible, it's equivalent to 
$$\left( \begin{array}{cc}
1 & t-1
\end{array} \right)
\thicksim
\left( \begin{array}{cc}
1 & 0
\end{array} \right)$$
Hence $$H_1(\U, u_0;\Kt)=\Kt\oplus \Kt/(1)=\Kt,$$ and so $H_1(\U;\Kt)=0$ and $\delta_n=0$ for $n\geq 0$.
\end{proof}

\begin{eg}\rm

The higher-order degrees of the complement of $m$ lines intersecting at a single point in $\C^2$ are $$\delta_n=m(m-2).$$

\end{eg}

Let $C$ the curve formed by $m$ lines intersecting at a single point in $\C^2$ and $\U=\C^2-C$.

\begin{claim}
$\pi_1(\U)=\mathbb{F}_{m-1}\times \Z$ where $\mathbb{F}_{m-1}$ is the free group of $m-1$ generators.
\end{claim}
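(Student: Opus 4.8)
The plan is to exploit the fact that $C$ is a \emph{central} line arrangement, so that $\U$ is a fiber bundle over a punctured projective line. First I would introduce the scalar $\C^{*}$-action on $\C^{2}$, namely $\lambda\cdot(x,y)=(\lambda x,\lambda y)$. Each of the $m$ lines through the origin is invariant under this action, and after deleting the origin the action is free; hence it restricts to a free action on $\U=\C^{2}\setminus C$. The orbit space of $\C^{2}\setminus\{0\}$ is $\mathbb{P}^{1}$, the $m$ lines correspond to $m$ distinct points $P_{1},\dots,P_{m}\in\mathbb{P}^{1}$, and $C\setminus\{0\}$ is exactly the preimage of $\{P_{1},\dots,P_{m}\}$ under the quotient map $\C^{2}\setminus\{0\}\to\mathbb{P}^{1}$. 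Thus $\U$ is a principal $\C^{*}$-bundle over $B:=\mathbb{P}^{1}\setminus\{P_{1},\dots,P_{m}\}$ --- equivalently, the complement of the zero-section together with $m$ fibers in the tautological line bundle $\mathcal{O}_{\mathbb{P}^{1}}(-1)$.

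Next I would pin down the base. The space $B$ is a connected open subsurface of $S^{2}$ with $\chi(B)=2-m$, so it is homotopy equivalent to a connected graph with first Betti number $m-1$, i.e. to $\bigvee_{m-1}S^{1}$; in particular $\pi_{1}(B)\cong\mathbb{F}_{m-1}$, $B$ is aspherical, and $B$ has the homotopy type of a $1$-complex, so $H^{2}(B;\Z)=0$. Since complex line bundles (and hence $\C^{*}$-bundles) over $B$ are classified by $H^{2}(B;\Z)$, the bundle $\U\to B$ is topologically trivial, giving a homeomorphism $\U\cong B\times\C^{*}$. Consequently $\pi_{1}(\U)\cong\pi_{1}(B)\times\pi_{1}(\C^{*})\cong\mathbb{F}_{m-1}\times\Z$, which is the claim.

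If one prefers to avoid invoking triviality of the bundle, the same conclusion follows from the homotopy long exact sequence of the fibration $\C^{*}\to\U\to B$: using $\pi_{2}(B)=0$ one obtains a short exact sequence $1\to\Z\to\pi_{1}(\U)\to\mathbb{F}_{m-1}\to1$, with $\Z=\pi_{1}(\C^{*})$. The structure group $\C^{*}$ is connected, so the monodromy of $\pi_{1}(B)$ on $\pi_{1}(\C^{*})\cong\Z$ is trivial; since $\Z$ is abelian this makes the $\Z$ central in $\pi_{1}(\U)$. The sequence splits because $\mathbb{F}_{m-1}$ is free, and a split central extension is a direct product, so $\pi_{1}(\U)\cong\Z\times\mathbb{F}_{m-1}$.

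The points that require care --- rather than being genuinely hard --- are two. First, one must use the projectivized/$\C^{*}$-quotient picture rather than a naive affine projection: the line through the origin in the direction $\{x=0\}$ cannot be quotiented out by an affine coordinate, and it is the tautological-bundle description that makes the fibration over $B$ global. Second, one must justify that the $\C^{*}$-bundle over $B$ is trivial (or, in the alternative route, that the monodromy is trivial and the extension central), which is where one uses that $B$ has the homotopy type of a $1$-complex and that $\C^{*}$ is connected; this is exactly what upgrades the semidirect product to the direct product $\mathbb{F}_{m-1}\times\Z$.
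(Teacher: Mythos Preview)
Your proof is correct and follows essentially the same route as the paper: both identify $\U$ as the total space of the restricted Hopf $\C^{*}$-fibration over $\mathbb{P}^{1}\setminus\{m\text{ points}\}$ and then argue that this bundle is trivial, yielding $\U\simeq\C^{*}\times(\mathbb{P}^{1}\setminus\{m\text{ points}\})$ and hence $\pi_1(\U)\cong\Z\times\mathbb{F}_{m-1}$. The only difference is in the justification of triviality: the paper observes that the bundle is the restriction of the trivial bundle $\C^{2}\setminus\{\text{one line}\}\to\mathbb{P}^{1}\setminus\{\text{point}\}\simeq\C$, whereas you invoke the classification of $\C^{*}$-bundles by $H^{2}(B;\Z)=0$ (and give a second argument via the homotopy exact sequence plus centrality). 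Both are valid and amount to the same idea.
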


\begin{proof}

Consider the following restriction of Hopf fibration $$\C^* \hookrightarrow \C^2-C \rightarrow \mathbb{P}^1-\{m\text{ points}\}.$$ 

This fibration can be embedded in a trivial fibration, hence it is also trivial.
$$\xymatrix{
             \C^* \ar[rd]                &  &        &    \C^* \ar[rd]          & \\
     &   \C^2-C \ar[d]   &    \hookrightarrow&             &  \C^2- \{\text{1 line} \ar[d]\} \\  
     & \mathbb{P}^1 -\{m\text{ points}\}   &        &             &  \mathbb{P}^1 - \{1\text{ point}\} \simeq\C\\                            
}$$

Thus $$\C^2-C\simeq\ C^*\times (\mathbb{P}^1 -\{m\text{ points}\} )\simeq S^1\times (S^1 \vee \cdots \vee S^1)$$ and $$\pi_1(\U)\cong \pi_1(S^1)\times \pi_1(S^1 \vee \cdots \vee S^1)\cong  \Z \times(\Z*\cdots *\Z).$$
\end{proof}
$$G=\pi_1(\U)=\langle a_1,..., a_{m-1},y \ | \ a_1ya_1^{-1}y^{-1},...,a_{m-1}ya_{m-1}^{-1}y^{-1}\rangle.$$
 In this presentation, $a_i$'s are the meridians around the line components of $C$, and $y$ is the meridian around the line at infinity. So $lk(a_1)=...=lk(a_{m-1})=1$ and $lk(y)=m$.

For simplicity, let $x_1=a_1$, $x_2=a_2a_1^{-1},...,x_{m-1}=a_{m-1}a_1^{-1}$, get
$$G=\langle x_1,..., x_{m-1},y \ | \ x_1yx_1^{-1}y^{-1},...,x_{m-1}yx_{m-1}^{-1}y^{-1}\rangle$$ with $lk(x_2)=...=lk(x_{m-1})=0$. An presentation matrix for $H_1(\U,u_0;\Z G)$, as a left $\Z G$-module, is a $(m-1)\times m$ matrix.
\begin{tiny}

$$\left( \begin{array}{ccccc}
1-y & 0 &\cdots &0& x_1-1\\
0 & 1-y &\cdots &0& x_2-1\\
\vdots & \vdots &\ddots&\vdots& \vdots\\
0 & 0 &\cdots &1-y& x_{m-1}-1\\
\end{array} \right)$$
\end{tiny}

Since $H_1(\U)=G/G'\cong \Z^m$ is a free abelian group with generators $x_1,...,x_{m-1},y$, in particularly $x_{m-1}\neq 1$ in $G/G'$. So by fact 9.2, $1-x_{m-1}$ is a unit in  $\mathbb{K}_n$.

Multiply the $(m-1)$-th column by $1-x_{m-1}$ on the right. Add the first column times $1-x_1$, the second column times $1-x_2$, ... , the $(m-2)$-th column times $1-x_{m-2}$, and the $m$-th column times $1-y$ to the $(m-1)$-th column, 
\begin{tiny}

$$\left( \begin{array}{ccccc}
1-y & 0 &\cdots &0& x_1-1\\
0 & 1-y &\cdots &0& x_2-1\\
\vdots & \vdots &\ddots&\vdots& \vdots\\
0 & 0 &\cdots &0& x_{m-1}-1\\
\end{array} \right)$$
\end{tiny}

Interchange the last two columns, and muliply the last row by $(x_{m-1}-1)^{-1}$ on the left, get

\begin{tiny}

$$\left( \begin{array}{ccccc}
1-y & 0 &\cdots & x_1-1&0\\
0 & 1-y &\cdots &x_2-1&0 \\
\vdots & \vdots &\ddots&\vdots& \vdots\\
0 & 0 &\cdots &1& 0\\
\end{array} \right)$$\end{tiny}

Add the last row times $1-x_i$ (on the left) to the $i$-th row, get
\begin{tiny}
$$\left( \begin{array}{ccccc}
1-y & 0 &\cdots &0&0\\
0 & 1-y &\cdots &0&0 \\
\vdots & \vdots &\ddots&\vdots& \vdots\\
0 & 0 &\cdots &1& 0\\
\end{array} \right)$$
\end{tiny}

To obtain a presentation matrix for $H_1(\U,u_0;\Kt)$, we choose the splitting that maps $t$ to $x_1$, then since $y$ has linking number $m$, $y$ is replaced by a polynomial $p(t)$ with degree $m$. So
$$H_1(\U,u_0;\Kt)\cong \Kt \oplus \frac{\Kt}{(p(t)-1)} \oplus \cdots \oplus \frac{\Kt}{(p(t)-1)} \oplus\frac{\Kt}{(1)}  $$ with $m-2$ copies of $\Kt/(p(t)-1) $. Thus $\delta_n=m(m-2).$

\begin{eg}\rm
$$G=\langle a, b \ | \ aba=bab \rangle$$
 is the trefoil knot group and also the fundamental group of the complement of a cuspidal cubic $C=\{x^2+y^3=0\}$. $\delta_0=2$ and $\delta_n=1$ for all $n\geq 1$.

\end{eg}

\begin{proof}
\textbf{Method 1}:  $x^2+y^3$ is a weight homogeneous polynomial with $w_1=3, w_2=2, d=6$.

By proposition 9.5, $$\mu (C,0)=\frac{(6-3)(6-2)}{3\cdot 2}=2.$$ Since $H_1(U)=\Z$, $\beta_1(\U)=1$. 
Thus $\delta_0=\mu (C,0)=2$ and $\delta_n=\mu (C,0)-1=1$ for all $n\geq 1$.

\bigskip
\textbf{Method 2}: (Use Fox Calculus)
Get a new set of generators by letting $x=a$, $y=ba^{-1}$. So, $$G=\langle x, y \ | \ xyx^2=yx^2yx\rangle =\langle x, y \ | \ xyxy^{-1}x^{-2}y^{-1}\rangle.$$
Since $
\frac{\partial R_1}{\partial x}=1+xy-xyxy^{-1}x^{-1}-xyxy^{-1}x^{-2}=1+xy-yx-y,$ and \newline
$\frac{\partial R_1}{\partial y}=x-xyxy^{-1}-xyxy^{-1}x^{-2}y^{-1}=x-yx^2-1,$ the presentation matrix is:
$$\left( \begin{array}{cc}
1+xy-yx-y & x-yx^2-1
\end{array} \right).$$

When $n=0$, since $G/G'=\langle x\rangle$, to obtain a presentation matrix for $H_1(\U,u_0;\Kt)$, choose splitting that maps $t \rightarrow x$, $1\rightarrow y$. Get
$$\left( \begin{array}{cc}
0& t-t^2-1
\end{array} \right)
\thicksim
\left( \begin{array}{cc}
t^2-t+1& 0
\end{array} \right)
$$
Therefore, $H_1(\U,u_0;\Kt)=\Kt\oplus \Kt/\langle t^2-t+1\rangle$ and we conclude that \linebreak
 $H_1(\U;\Kt)=\Kt/\langle t^2-t+1\rangle$. Thus $\delta_0=2$.

When $n\geq 1$, by theorem 3.6 of \cite{MR6} $G'/G''\cong \Z^2$ is generated by $y$ and $xyx^{-1}$. 
Thus $y-1\neq 0$ in $G'/G_r^{(n+1)}$, and $y-1$ is invertible in $\mathbb{K}_n$ for $n\geq 1$.
Now multiply the second column on the right by $1-y$. Add the first column times $1-x$ (on the right) to the second column, get
$$\left( \begin{array}{cc}
1+xy-yx-y & 0
\end{array} \right)$$
Choose splitting $t\rightarrow x$, $y\rightarrow y$, get
$$\left( \begin{array}{cc}
1+xyx^{-1}t-yt-y & 0
\end{array} \right)
\thicksim
\left( \begin{array}{cc}
(xyx^{-1}-y)t+(1-y)& 0
\end{array} \right)$$
Hence $H_1(\U;\Kt)=\Kt/\langle(xyx^{-1}-y)t+1-y \rangle$ and $\delta_n=1$ for $n\geq 1$.

\end{proof}

\begin{eg}\rm
$$G=\langle a, b \ | \ aba=bab \rangle \times \Z=\langle a, b, c \ | \ aba=bab, aca^{-1}c^{-1},bcb^{-1}c^{-1}\rangle$$

Let $C$ be the union of a cuspidal cubic and a generic line. By \cite{Oka}, $G=\pi_1(\C^2-C)$. Then
$\delta_n=0$ for $n\geq 1$.

\end{eg}

\begin{proof}
Change a new set of generators $x=a, y=ba^{-1}, z=ca^{-1}$, get
$$G=\langle x, y,z \ | \ xyxy^{-1}x^{-2}y^{-1},xzx^{-1}z^{-1},yzxy^{-1}x^{-1}z^{-1} \rangle.$$
Since $\frac{\partial R_2}{\partial x}=1-xzx^{-1}=1-z$, $\frac{\partial R_2}{\partial z}=x-xzx^{-1}z^{-1}=x-1$, 
 $\frac{\partial R_3}{\partial x}=yz-z$, $\frac{\partial R_3}{\partial y}=1-zx$, $\frac{\partial R_3}{\partial z}=y-1$,
the presentation matrix is 
\begin{tiny}
$$\left( \begin{array}{ccc}
1+xy-yx-y & x-yx^2-1 & 0\\
1-z & 0& x-1\\
yz-z&1-zx&y-1
\end{array} \right)$$
\end{tiny}

When $n=0$, $\Gamma_0=\Z^2$ generated by $x$, $z$. So $z\neq 1$ in $\mathbb{K}_n$ for all $n$, and $z-1$ is invertible. Choose splitting $t\rightarrow x$, $1\rightarrow y$ and $z\rightarrow z$, get
\begin{tiny}
$$\left( \begin{array}{ccc}
0 & t-t^2-1 & 0\\
1-z & 0& t-1\\
0&1-zt&0
\end{array} \right)$$
\end{tiny}
Multiply the first column on the right by $(1-z)^{-1}$, get
\begin{tiny}
$$\left( \begin{array}{ccc}
0 & t-t^2-1 & 0\\
1 & 0& t-1\\
0&1-zt&0
\end{array} \right)$$
\end{tiny}
Add the first column times $1-t$ to the last column, get 
\begin{tiny}
$$\left( \begin{array}{ccc}
0 & t-t^2-1 & 0\\
1 & 0& 0\\
0&1-zt&0
\end{array} \right)
\thicksim
\left( \begin{array}{ccc}
 t-t^2-1 &0 & 0\\
 0& 1& 0\\
1-zt&0&0
\end{array} \right)$$
\end{tiny}
Multiply the first row on the left by $z^2$, then add last row times $-zt+(z-1)$ to the first row, get
\begin{tiny}
$$
\left( \begin{array}{ccc}
-z^2+z-1 &0 & 0\\
 0& 1& 0\\
1-zt&0&0
\end{array} \right)
\thicksim
\left( \begin{array}{ccc}
z^2-z+1 &0 & 0\\
 0& 1& 0\\
1-zt&0&0
\end{array} \right)$$
\end{tiny}
Since $z^2-z+1$ has three terms, it cannot be equal to zero, and therefore is a unit in $\Kt$. Multiply the first row by $(z^2-z+1)^{-1}$, then add the first row times $zt-1$ to the last row, get 
\begin{tiny}
$$\left( \begin{array}{ccc}
1 &0 & 0\\
 0& 1& 0\\
0&0&0
\end{array} \right)$$
\end{tiny}
So $\delta_0=0$.

When $n\geq 1$, now $1-y$ and $1-z$ are both invertible in $\mathbb{K}_n$, multiply the second column by $1-y$, get
\begin{tiny}
$$\left( \begin{array}{ccc}
1+xy-yx-y & (x-yx^2-1)(1-y) & 0\\
1-z & 0& x-1\\
(y-1)z&(1-zx)(1-y)&y-1
\end{array} \right)$$
\end{tiny}
Add the first column times $1-x$ and the last column times $1-z$ to the second column, get
\begin{tiny}
$$\left( \begin{array}{ccc}
1+xy-yx-y & 0 & 0\\
1-z & 0& x-1\\
(y-1)z&0&y-1
\end{array} \right)
\thicksim
\left( \begin{array}{ccc}
1+xy-yx-y & 0 & 0\\
1-z & x-1&0 \\
(y-1)z&y-1&0
\end{array} \right)
$$
\end{tiny}
Since $y-1$ is invertible, multiply the last row (on the left) by $(y-1)^{-1}$, get
\begin{tiny}
$$\left( \begin{array}{ccc}
1+xy-yx-y & 0 & 0\\
1-z & x-1&0 \\
z&1&0
\end{array} \right)
$$
\end{tiny}
Add the second column times $-z$ to the first column, get
\begin{tiny}
$$\left( \begin{array}{ccc}
1+xy-yx-y & 0 & 0\\
1-xz & x-1&0 \\
0&1&0
\end{array} \right)
$$
\end{tiny}
Add the last row times $1-x$ to the second row, 
\begin{tiny}

$$\left( \begin{array}{ccc}
1+xy-yx-y & 0 & 0\\
1-xz & 0&0 \\
0&1&0
\end{array} \right)
$$\end{tiny}
Choose a splitting $t\rightarrow x$,
\begin{tiny}

$$\left( \begin{array}{ccc}
(xyx^{-1}-y)t+1-y & 0 & 0\\
1-zt & 0&0 \\
0&1&0
\end{array} \right)
$$\end{tiny}
Add the second row times $-(xyx^{-1}-y)z^{-1}$ to the first row,
\begin{tiny}

$$\left( \begin{array}{ccc}
yz^{-1}-xyx^{-1}z^{-1}+1-y & 0 & 0\\
1-zt & 0&0 \\
0&1&0
\end{array} \right)
$$\end{tiny}
$yz^{-1}-xyx^{-1}z^{-1}+1-y\neq 0  \Leftrightarrow  (y-xyx^{-1})\neq (y-1)z$ which is true. So $yz^{-1}-xyx^{-1}z^{-1}+1-y$ is a unit. We get $\delta_n=0$ for $n\geq 1$.

\end{proof}

\begin{eg}\rm
$G_1=\langle a, b \ | \ aba=bab\rangle$, $G_2=\langle c,d \ | \ cdc=dcd\rangle$, $$G=G_1\times G_2=\langle a,b,c,d \ | \ abab^{-1}a^{-1}b^{-1},cdcd^{-1}c^{-1}d^{-1},aca^{-1}c^{-1},bcb^{-1}c^{-1},ada^{-1}d^{-1},bdb^{-1}d^{-1}\rangle$$
Notice that $G$ is the fundamental group of the complement of two cuspidal cubics intersecting transversely by \cite{Oka}.
$\delta_n=0$ for $n\geq 1$.
\end{eg}

\begin{proof}

Change new generators $x=a, y=ba^{-1}, u=ca^{-1}, v=da^{-1}$, then $a=x,b=yx,$ $c=ux,d=vx$. Get
$G=\langle x,y,u,v \ | \ xyxy^{-1}x^{-2}y^{-1},uxvxuv^{-1}x^{-1}u^{-1}x^{-1}v^{-1},$

$xux^{-1}u^{-1},yuxy^{-1}x^{-1}u^{-1},xvx^{-1}v^{-1},yvxy^{-1}x^{-1}v^{-1}\rangle.$

$\frac{\partial R_2}{\partial x}=u+uxv-vxu-v$, $\frac{\partial R_2}{\partial u}=1+uxvx-vx$, $\frac{\partial R_2}{\partial u}=ux-vxux-1$.

The presentation matrix is 
\begin{tiny}

$$\left( \begin{array}{cccc}
1+xy-yx-y & x-yx^2-1 & 0&0\\
u+uxv-vxu-v & 0& 1+uxvx-vx&ux-vxux-1\\
1-u&0&x-1&0\\
(y-1)u&1-ux&y-1&0\\
1-v&0&0&x-1\\
(y-1)v&1-vx&0&y-1
\end{array} \right)$$\end{tiny}

When $n=0$, $\Gamma_0=\Z^2$ generated by $x$, $u$. So $u\neq 1$ in $\mathbb{K}_n$ for all $n$, and $u-1$ is invertible. Choose splitting $t\rightarrow x$, $1\rightarrow y$, $u\rightarrow u$ and $u\rightarrow v$, get

\begin{tiny}

$$\left( \begin{array}{cccc}
0 & t-t^2-1 & 0&0\\
0 & 0& 1+u^2t^2-ut&ut-u^2t^2-1\\
1-u&0&t-1&0\\
0&1-ut&0&0\\
1-u&0&0&t-1\\
0&1-ut&0&0
\end{array} \right)$$\end{tiny}
Add the fourth row times $-1$ to the last row, get
\begin{tiny}

$$\left( \begin{array}{cccc}
0 & t-t^2-1 & 0&0\\
0 & 0& 1+u^2t^2-ut&ut-u^2t^2-1\\
1-u&0&t-1&0\\
0&1-ut&0&0\\
1-u&0&0&t-1\\
0&0&0&0
\end{array} \right)$$\end{tiny}
Since $1-u$ is invertible, multiply the third column by $1-u$, then add the first column times $1-t$ and last column times $1-u$ to the third column, multiply the first column by $(1-u)^{-1}$, get
\begin{tiny}

$$\left( \begin{array}{cccc}
0 & t-t^2-1 & 0&0\\
0 & 0& 0&ut-u^2t^2-1\\
1&0&0&0\\
0&1-ut&0&0\\
1&0&0&t-1\\
0&0&0&0
\end{array} \right)$$\end{tiny}
Subtract the third row from the fifth row, get
\begin{tiny}

$$\left( \begin{array}{cccc}
0 & t-t^2-1 & 0&0\\
0 & 0& 0&ut-u^2t^2-1\\
1&0&0&0\\
0&1-ut&0&0\\
0&0&0&t-1\\
0&0&0&0
\end{array} \right)
\thicksim
\left( \begin{array}{cccc}
 t-t^2-1 &0 & 0&0\\
  0&ut-u^2t^2-1&0 &0\\
0&0&1&0\\
1-ut&0&0&0\\
0&t-1&0&0\\
0&0&0&0
\end{array} \right)$$\end{tiny}
Multiply the first row (on the left) by $u^2$, add the fourth row times $-ut+u-1$ to the first row, add the fifth row times $u^2t+(u-1)u$ to the second row, get
\begin{tiny}

$$\left( \begin{array}{cccc}
 u-u^2-1 &0 & 0&0\\
  0&u-u^2-1&0 &0\\
0&0&1&0\\
1-ut&0&0&0\\
0&t-1&0&0\\
0&0&0&0
\end{array} \right)$$\end{tiny}
Since $u-u^2-1$ has three terms, it is invertible. Get
\begin{tiny}

$$\left( \begin{array}{cccc}
 1 &0 & 0&0\\
  0&1&0 &0\\
0&0&1&0
\end{array} \right)$$\end{tiny}
Thus $\delta_0=0.$

When $n\geq 1$, since $1-y$ is also invertible, multiply the second column by $1-y$, then add the first column times $1-x$, the third column times $1-u$, and the fourth column times $1-v$ to the second column, get 
\begin{tiny}

$$\left( \begin{array}{cccc}
1+xy-yx-y & 0 & 0&0\\
u+uxv-vxu-v & 0& 1+uxvx-vx&ux-vxux-1\\
1-u&0&x-1&0\\
(y-1)u&0&y-1&0\\
1-v&0&0&x-1\\
(y-1)v&0&0&y-1
\end{array} \right)$$\end{tiny}
Multiply the fourth and the sixth row by $(1-y)^{-1}$, get
\begin{tiny}

$$\left( \begin{array}{cccc}
1+xy-yx-y & 0 & 0&0\\
u+uxv-vxu-v & 0& 1+uxvx-vx&ux-vxux-1\\
1-u&0&x-1&0\\
u&0&1&0\\
1-v&0&0&x-1\\
v&0&0&1
\end{array} \right)
\thicksim
\left( \begin{array}{cccc}
1+xy-yx-y & 0 & 0&0\\
u+uxv-vxu-v & 1+uxvx-vx&ux-vxux-1& 0\\
1-u&x-1&0&0\\
u&1&0&0\\
1-v&0&x-1&0\\
v&0&1&0
\end{array} \right)$$\end{tiny}
Add the fourth row times $1-x$ to the third row. Add the sixth row times $1-x$ to the fifth row, get
\begin{tiny}

$$\left( \begin{array}{cccc}
1+xy-yx-y & 0 & 0&0\\
u+uxv-vxu-v & 1+uxvx-vx&ux-vxux-1& 0\\
1-xu&0&0&0\\
u&1&0&0\\
1-xv&0&0&0\\
v&0&1&0
\end{array} \right)$$\end{tiny}

Add the second column times $-u$ and the third column times $-v$ to the first column, get 
\begin{tiny}$$\left( \begin{array}{cccc}
1+xy-yx-y & 0 & 0&0\\
vxuxv-uxvxu & 1+uxvx-vx&ux-vxux-1& 0\\
1-xu&0&0&0\\
0&1&0&0\\
1-xv&0&0&0\\
0&0&1&0
\end{array} \right)$$\end{tiny}
Add the fourth row times $-(1+uxvx-vx)$ and the sixth row times $-(ux-vxux-1)$ to the second row, get
\begin{tiny}

$$\left( \begin{array}{cccc}
1+xy-yx-y & 0 & 0&0\\
vxuxv-uxvxu & 0&0& 0\\
1-xu&0&0&0\\
0&1&0&0\\
1-xv&0&0&0\\
0&0&1&0
\end{array} \right)
\thicksim
\left( \begin{array}{cccc}
1+xy-yx-y & 0 & 0&0\\
vxuxv-uxvxu & 0&0& 0\\
1-xu&0&0&0\\
1-xv&0&0&0\\
0&1&0&0\\

0&0&1&0
\end{array} \right)$$\end{tiny}
Choose a splitting $t\rightarrow x$, get
\begin{tiny}

$$\left( \begin{array}{cccc}
1+xyx^{-1}t-yt-y & 0 & 0&0\\
vxuxvx^{-2}t^2-uxvxux^{-2}t^2 & 0&0& 0\\
1-ut&0&0&0\\
1-vt&0&0&0\\
0&1&0&0\\
0&0&1&0
\end{array} \right)$$\end{tiny}
Looking at the first and the third row, if $yu^{-1}-xyx^{-1}u^{-1}+1-y$ is a unit, we are done.
Looking at the third and the fourth row, if $u^{-1}-v^{-1}$ is a unit, we are done.
In fact, $u^{-1}-v^{-1}=0 \Leftrightarrow v=u$, which is not true in $\Kt$, so $u^{-1}-v^{-1}$ is a unit. Get
\begin{tiny}
$$\left( \begin{array}{cccc}
1& 0 & 0&0\\
0&1&0&0\\
0&0&1&0
\end{array} \right)$$\end{tiny} and $\delta_n=0$ for $n\geq 1$.

\end{proof}

Inspired by the result of example 9.9 and 9.10, we have a following conjecture.
\begin{conj}
If $C$ is formed by two curves which intersect transversally, then the higher-order degrees of the complement $\delta_n=0$ for $n\geq 1$.

\end{conj}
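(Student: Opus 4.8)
The plan is to reduce $\delta_n(\U)$ to a purely group-theoretic quantity and then exploit a product decomposition of $G=\pi_1(\U)$. Since $\delta_n(\U)=\delta_{n,1}(\U)=\mathrm{rk}_{\mathbb{K}_n}H_1(\U;R_n)$, and $H_1(\U;R_n)=\mathcal{A}_{n,1}^{\Z}(\U)\otimes_{\Z\Gamma_n}R_n$ depends only on $G$ together with the total linking number homomorphism $\psi\colon G\to\Z$, I may work entirely with $G$. After choosing the generic hyperplane $H$ at infinity disjoint from $C_1\cap C_2$, the transversality (general position) of $C_1$ and $C_2$ lets me invoke the product theorem of Oka--Sakamoto (as used in \cite{Oka}): $G\cong G_1\times G_2$, where $G_i=\pi_1(\C^2\setminus C_i)$, and under this isomorphism $\psi$ restricts on each factor to the total linking number homomorphism $\psi_i\colon G_i\to\Z$ (small meridians of $C_i$ do not link $C_{3-i}$). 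This is the geometric input; the rest is algebra inside the framework of Sections~2--5.

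Next I would record that the rational derived series splits over a direct product: by induction on $k$, using $[A\times B,A\times B]=[A,A]\times[B,B]$ together with the fact that $a^{k_1}\in[A,A]$ and $b^{k_2}\in[B,B]$ force $(a,b)^{k_1k_2}\in[A,A]\times[B,B]$, one gets $(G_1\times G_2)_r^{(k)}=(G_1)_r^{(k)}\times(G_2)_r^{(k)}$ for all $k$. Hence $\Gamma_n=\Gamma_n^{1}\times\Gamma_n^{2}$ with $\Gamma_n^{i}:=G_i/(G_i)_r^{(n+1)}$, and setting $N_i:=(G_i)_r^{(n+1)}$ we obtain
$$\mathcal{A}_{n,1}^{\Z}(\U)=H_1(\U;\Z\Gamma_n)=\frac{N_1\times N_2}{[N_1\times N_2,\,N_1\times N_2]}\cong M_1\oplus M_2,\qquad M_i:=\frac{N_i}{[N_i,N_i]}=\mathcal{A}_{n,1}^{\Z}(\U_i).$$
A short check with the deck action shows that on $M_i$ the complementary factor $\Gamma_n^{\,3-i}$ acts trivially, since conjugation by an element of $G_{3-i}$ fixes $N_i\subseteq G_i$; thus the $\Z\Gamma_n$-module structure on $M_i$ is inflated along the ring surjection $\Z\Gamma_n\twoheadrightarrow\Z\Gamma_n^{i}$, and with this structure $M_i$ is exactly the $n$-th order Alexander module of the curve complement $\U_i$.

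The crucial step is to prove $M_i\otimes_{\Z\Gamma_n}R_n=0$. Write $\bar\psi=\bar\psi_1+\bar\psi_2\colon\Gamma_n^{1}\times\Gamma_n^{2}\to\Z$ and $\bar\Gamma_n=\ker\bar\psi$. Because $\bar\psi_{3-i}$ is surjective, the projection $\bar\Gamma_n\to\Gamma_n^{i}$ is onto (given $g\in\Gamma_n^{i}$, pick $h$ with $\bar\psi_{3-i}(h)=-\bar\psi_i(g)$, so $(g,h)\in\bar\Gamma_n$), hence $\Z\bar\Gamma_n\twoheadrightarrow\Z\Gamma_n^{i}$ is a ring surjection and the action of $\Z\bar\Gamma_n$ on $M_i$ factors through it. On the other hand $\U_i$ is an affine plane curve complement with $H$ generic at infinity, so by Corollary~\ref{finite} (equivalently $H_1(\U_i;\mathcal{K}_n^{i})=0$) the module $M_i=H_1(\U_i;\Z\Gamma_n^{i})$ is $\Z\Gamma_n^{i}$-torsion. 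Thus for any $\xi\in M_i$ there is $\gamma\in\Z\Gamma_n^{i}\setminus\{0\}$ with $\xi\gamma=0$; lifting $\gamma$ to $\tilde\gamma\in\Z\bar\Gamma_n\setminus\{0\}$ we still get $\xi\tilde\gamma=0$, and since $\tilde\gamma$ is a unit in $R_n=\Z\Gamma_n(\Z\bar\Gamma_n\setminus\{0\})^{-1}$ this forces $\xi\otimes 1=0$ in $M_i\otimes_{\Z\Gamma_n}R_n$. As $\xi$ was arbitrary, $M_i\otimes_{\Z\Gamma_n}R_n=0$ for $i=1,2$, so by flatness of $R_n$ over $\Z\Gamma_n$ we get $H_1(\U;R_n)=(M_1\oplus M_2)\otimes_{\Z\Gamma_n}R_n=0$; hence $\mathcal{A}_{n,1}(\U)=0$ and $\delta_n(\U)=0$. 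Note this argument never uses $n\ge 1$, so it in fact yields $\delta_n(\U)=0$ for every $n\ge 0$, consistent with the Fox-calculus computations in the examples above.

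The main obstacle is not in this algebra but in the reduction to a product: one must be sure that ``$C$ is formed by two curves which intersect transversally'' really means full general position, so that Oka--Sakamoto's theorem applies and $\psi$ decomposes as claimed. If instead one component were allowed to pass through a singular point of the other, the product decomposition fails, and so can the conclusion --- witness the complement of $m\ge 3$ concurrent lines, where $\delta_n=m(m-2)\ne 0$. Within the paper's machinery the one point needing genuine care is the torsionness of $M_i$ as a $\Z\Gamma_n^{i}$-module: this is false for the fundamental group of a single curve component considered in isolation (e.g.\ a union of lines through a point) and in general relies on the finiteness results of Section~5 (Corollary~\ref{finite}), which is precisely where the genericity of $H$ at infinity enters.
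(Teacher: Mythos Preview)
The statement you are addressing is labeled a \emph{Conjecture} in the paper and is not proved there; it is offered only as a speculation motivated by the Fox-calculus computations in Examples~9.9 and~9.10. So there is no ``paper's own proof'' to compare against.

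Your argument, by contrast, is a genuine proof, and as far as I can see a correct one. The reduction via Oka--Sakamoto to $G\cong G_1\times G_2$, the splitting $(G_1\times G_2)_r^{(k)}=(G_1)_r^{(k)}\times(G_2)_r^{(k)}$, and the resulting decomposition $\mathcal{A}_{n,1}^{\Z}(\U)\cong M_1\oplus M_2$ with the complementary factor acting trivially are all straightforward. The decisive step---that each $M_i$ dies after tensoring with $R_n$ because every element is annihilated by a nonzero element of $\Z\bar\Gamma_n$---is clean: you use the surjectivity of $\bar\Gamma_n\to\Gamma_n^{i}$ (needing only that the \emph{other} curve has a meridian of linking number~$1$) to lift an annihilator from $\Z\Gamma_n^{i}$ to $\Z\bar\Gamma_n\setminus\{0\}$, and you invoke the finiteness result for plane curves from \cite{HP} (the $m=1$ case of Corollary~\ref{finite}) to guarantee such an annihilator exists. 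Since $\tilde\gamma$ becomes a unit in $R_n$, the tensor vanishes. As you observe, nothing in this chain uses $n\ge 1$, so the argument in fact yields $\delta_n=0$ for all $n\ge 0$, consistent with the explicit computations in Examples~9.9 and~9.10.

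Your caveats are exactly the right ones. The whole argument stands or falls on Oka--Sakamoto, hence on reading ``intersect transversally'' as general position (intersection in $d_1d_2$ distinct points, none lying on the singular locus of either factor). The paper's motivating examples satisfy this, and your counterexample of $m\ge 3$ concurrent lines shows precisely why the hypothesis cannot be weakened to mere set-theoretic transversality at smooth points.
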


\section{Higher-Order Degrees of A Group }

Given a group $G$ and a map $\phi:G\rightarrow \Z$, the first higher-order degree is defined (denote $\delta_n (G)=\delta_{n,1}(G)$). In fact, the first higher-order degree is group invariant. $$\delta_n(G)=\text{rk}_{\mathbb{K}_n}(G_r^{(n+1)}/[G_r^{(n+1)},G_r^{(n+1)}]\otimes_{\Z\bar{\Gamma}_n} \mathbb{K}_n)$$

Fox calculus can be applied to calculate $\delta_n(G)$.

\begin{eg}
$\delta_n(\Z_p\ast\Z_q)=0$ for all $n$.
\end{eg}
\begin{proof}

$G=\langle a,b \ | \ a^p=1=b^q \rangle.$
Since $a^p=1\in [G_r^{(n)},G_r^{(n)}]$, $a\in G_r^{(n+1)}$ for all $n$. Similarly, $b\in G_r^{(n+1)}$ for all $n$. Thus 
$\Gamma_n=G/G_r^{(n+1)}=0$ and $\bar{\Gamma}_n=0$. So $\Z\bar{\Gamma}_n=\Z$ and $\mathbb{K}_n=\Q$.

Then $G_r^{(n+1)}/[G_r^{(n+1)},G_r^{(n+1)}]=G/G'$ and $\delta_n(G)=\text{rk}_{\Q}((\Z_p\times \Z_q)\otimes_{\Z}\Q)=0$ for all $n$.
\end{proof}

From this proof, we get a more generalized result:
\begin{prop}
Given $G$, let $H=\{h\in G \ | \ h^k=1 \text{ for some }k\geq 0\}$ and $N=N_G(H)$ be the normalizer of $H$. Define $\widehat{G}:=G/N$ and let $f:G\rightarrow \widehat{G}$ be the quotient map. Then $$\delta_n(G)=\delta_n(\widehat{G}).$$ 
\end{prop}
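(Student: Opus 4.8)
The plan is to show that passing from $G$ to $\widehat{G}=G/N$ changes neither the tower of PTFA quotients $\Gamma_n$ together with their maps to $\Z$, nor — modulo $\Z\bar{\Gamma}_n$-torsion — the $n$-th order Alexander module $\mathcal{A}_n^{\Z}(G):=G_r^{(n+1)}/[G_r^{(n+1)},G_r^{(n+1)}]$, so that the $\mathbb{K}_n$-ranks defining $\delta_n$ coincide. Here $N$ is the subgroup of $G$ generated by the torsion elements; it is normal since conjugation permutes torsion elements, and $\psi\colon G\to\Z$ kills every torsion element (because $\Z$ is torsion free), hence descends to $\widehat{G}$.

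The first step is the remark that every torsion element of $G$ lies in $G_r^{(k)}$ for all $k\ge 0$: if $h^m=1$ with $m\neq 0$ and $h\in G_r^{(k-1)}$, then $h^m=1\in[G_r^{(k-1)},G_r^{(k-1)}]$, so $h\in G_r^{(k)}$ by the definition of the rational derived series, and one inducts from $G_r^{(0)}=G$; thus $N\subseteq G_r^{(k)}$ for all $k$. Using this I would prove by induction on $k$ that $(G/N)_r^{(k)}=G_r^{(k)}/N$. For the inductive step, assume $(G/N)_r^{(k)}=G_r^{(k)}/N$. On one hand $G_r^{(k+1)}/N$ is a normal subgroup of $G_r^{(k)}/N$ with quotient $G_r^{(k)}/G_r^{(k+1)}$ torsion-free abelian, so Corollary \ref{co}(1) applied inside $G/N$ gives $(G/N)_r^{(k+1)}\subseteq G_r^{(k+1)}/N$. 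On the other hand, writing $\pi\colon G\to G/N$, the subgroup $\pi^{-1}\bigl((G/N)_r^{(k+1)}\bigr)$ is normal in $G_r^{(k)}$ with torsion-free abelian quotient $(G/N)_r^{(k)}/(G/N)_r^{(k+1)}$, so Corollary \ref{co}(1) applied inside $G$ gives $G_r^{(k+1)}\subseteq\pi^{-1}\bigl((G/N)_r^{(k+1)}\bigr)$, the reverse inclusion. In particular $\widehat{G}_r^{(n+1)}=G_r^{(n+1)}/N$ and $\Gamma_n(\widehat{G})=\widehat{G}/\widehat{G}_r^{(n+1)}\cong G/G_r^{(n+1)}=\Gamma_n(G)$ compatibly with the quotient maps; since $\psi$ descends to $\widehat{G}$ and induces the same $\bar{\psi}$ on $\Gamma_n$, the groups $G$ and $\widehat{G}$ have the same $\bar{\Gamma}_n$, the same $\mathbb{K}_n$, and the same $R_n$.

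Next I would compare the Alexander modules. Abelianizing the surjection $G_r^{(n+1)}\twoheadrightarrow G_r^{(n+1)}/N=\widehat{G}_r^{(n+1)}$, which is equivariant for the conjugation actions of $G$, yields a short exact sequence of right $\Z\Gamma_n$-modules
\[
0\longrightarrow \bar{N}\longrightarrow \mathcal{A}_n^{\Z}(G)\longrightarrow \mathcal{A}_n^{\Z}(\widehat{G})\longrightarrow 0,
\]
where $\bar{N}$ is the image of $N$ in $\mathcal{A}_n^{\Z}(G)$. Since $N$ is generated by torsion elements, $N/[N,N]$ is a torsion abelian group, hence so is its quotient $\bar{N}$. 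As $\mathbb{K}_n$ is a skew field of characteristic $0$, each element of $\bar{N}$ is annihilated by a nonzero central integer, which is invertible in $\mathbb{K}_n$, so $\bar{N}\otimes_{\Z\bar{\Gamma}_n}\mathbb{K}_n=0$. Tensoring the sequence with the flat $\Z\bar{\Gamma}_n$-module $\mathbb{K}_n$ then gives an isomorphism $\mathcal{A}_n^{\Z}(G)\otimes_{\Z\bar{\Gamma}_n}\mathbb{K}_n\cong\mathcal{A}_n^{\Z}(\widehat{G})\otimes_{\Z\bar{\Gamma}_n}\mathbb{K}_n$, and taking $\mathbb{K}_n$-ranks yields $\delta_n(G)=\delta_n(\widehat{G})$ by the formula for $\delta_n$ recalled at the start of this section.

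The main obstacle is the inductive identity $(G/N)_r^{(k)}=G_r^{(k)}/N$: the rational derived series does not commute with arbitrary quotients, and the argument genuinely needs both that $N$ lies inside every term $G_r^{(k)}$ and the minimality characterization of $G_r^{(k+1)}$ as the smallest normal subgroup of $G_r^{(k)}$ with torsion-free abelian quotient. This is also exactly where the \emph{rational} derived series does the essential work, since it is what forces every torsion element into all of the $G_r^{(k)}$. Once this identity is established, the remaining steps — the abelianization sequence and the vanishing of $\bar{N}$ after inverting the nonzero elements of $\Z\bar{\Gamma}_n$ — are routine.
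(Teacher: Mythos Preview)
Your proof is correct and follows essentially the same strategy as the paper's: show that every torsion element lies in each $G_r^{(k)}$ so that $N\subseteq G_r^{(n+1)}$ and $\Gamma_n$ is unchanged, then observe that the kernel of $\mathcal{A}_n^{\Z}(G)\to\mathcal{A}_n^{\Z}(\widehat{G})$ is $\Z$-torsion and hence dies upon tensoring with $\mathbb{K}_n$. Your argument is in fact more thorough than the paper's, which simply asserts $\Gamma_n=\widehat{\Gamma}_n$ without proving $\widehat{G}_r^{(k)}=G_r^{(k)}/N$ and handles the torsion vanishing by a direct computation on a single element rather than via your short exact sequence and flatness; your inductive use of Corollary~\ref{co}(1) fills exactly the gap the paper leaves.
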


For example, 
$\delta_n(G\times \Z_d)= \delta_n(G)=\delta_n(G\ast \Z_d)$.

\begin{proof}
Given $h\in H$, then $h^k=1\in [G_r^{(n)},G_r^{(n)}]$ for some $k$ and for all $n$. So $h\in G_r^{(n+1)}$ and $G/G_r^{(n+1)}=\Gamma_n=\widehat{\Gamma}_n=\widehat{G}/\widehat{G}_r^{(n+1)}$. Note that $\bar{\Gamma}_n$ is the kernel of the map $\Gamma_n\rightarrow \Z$, so $\bar{\Gamma}_n=\bar{\widehat{\Gamma}}_n$. To show $\delta_n(G)=\delta_n(\widehat{G}),$ it is enough to show $$G_r^{(n+1)}/[G_r^{(n+1)},G_r^{(n+1)}]\otimes_{\Z\bar{\Gamma}_n} \mathbb{K}_n\cong \widehat{G}_r^{(n+1)}/[\widehat{G}_r^{(n+1)},\widehat{G}_r^{(n+1)}]\otimes_{\Z\bar{\Gamma}_n} \mathbb{K}_n.$$
Let $e$ denote the identity element of $\bar{\Gamma}_n$, note that $G_r^{(n+1)}/[G_r^{(n+1)},G_r^{(n+1)}]$ is a multiplicative group and $\mathbb{K}_n$ is the ring of fraction of $\Z\bar{\Gamma}_n$.
So \begin{align*}
[h]\otimes 1&=[h]\otimes (k\cdot e )(k\cdot e)^{-1}\\
&=[ehe^{-1}]^k\otimes (k\cdot e)^{-1}\\
&=[h]^k\otimes (k\cdot e)^{-1}\\
&=[h^k]\otimes (k\cdot e)^{-1}\\
&=[1]\otimes (k\cdot e)^{-1},
\end{align*}
which mean all the torsion elements are trivial in the tensor product. Hence the above isomorphism is satisfied.

\end{proof}

\begin{eg}(\cite{Os}, Example 3.2)\rm

If $\bar{C}$ is Zariski's three-cuspidal quartic, then $G=\pi_1(\C^2-C)=\langle a, b \ | \ aba=bab, a^2=b^2 \rangle.$
Thus $G' \cong \Z/3\Z$. So $\delta_n(C)=0$, for all $n$. For all other quartics, the corresponding group of the affine complement is abelian, so the higher-order degrees vanish again.
\end{eg}

\begin{prop} Assume $G$ and $F$ are the fundamental group of some topological spaces with at least one non-torsion element, then
\begin{align*}
\delta_n(G\ast F)&=\infty,\\
\bar{\delta}_n(G\ast F)&=\bar{\delta}_n(G)+\bar{\delta}_n(F),\\
r_n(G\ast F)&=r_n(G)+r_n(F)+1.
\end{align*}

\end{prop}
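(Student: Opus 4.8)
The plan is to realise $G\ast F$ as the fundamental group of a wedge and to run a Mayer--Vietoris argument over the skew field $\mathcal{K}_n$ and over $R_n\cong\mathbb{K}_n[t^{\pm1}]$, after first checking that the two halves of the wedge contribute exactly the intrinsic invariants of $G$ and $F$. Choose based CW complexes $X,Y$ (presentation complexes, or $K(G,1)$ and $K(F,1)$) with $\pi_1(X)=G$, $\pi_1(Y)=F$ and a single common $0$-cell $x_0=y_0=:w_0$, so $W:=X\vee Y$ has $\pi_1(W)=G\ast F$. Since $\delta_n,\bar\delta_n,r_n$ are only defined once a primitive epimorphism $\pi_1\twoheadrightarrow\Z$ is fixed, the hypothesis ``$G$ and $F$ have a non-torsion element'' is to be read as $\beta_1(G),\beta_1(F)\ge1$; fix $\psi_G\colon G\twoheadrightarrow\Z$, $\psi_F\colon F\twoheadrightarrow\Z$ and let $\psi\colon G\ast F\twoheadrightarrow\Z$ restrict to them on the factors, so that $\beta_1(W)\ge2$. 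Recall also that $\delta_n,\bar\delta_n,r_n$ depend only on the group (through $\mathcal{A}_n^{\Z}=H_1(\,\cdot\,;\Z\Gamma_n)$), so the choices of $X,Y$ are immaterial.

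\emph{Step 1 (matching the coefficients).} I would first prove the key lemma that the natural map $G/G_r^{(n+1)}\to(G\ast F)/(G\ast F)_r^{(n+1)}$ is injective, and similarly for $F$; equivalently $G\cap(G\ast F)_r^{(n+1)}=G_r^{(n+1)}$. This follows from functoriality of the rational derived series, namely $h\big(H_r^{(k)}\big)\subseteq K_r^{(k)}$ for any homomorphism $h\colon H\to K$ (an easy induction on $k$): applied to the inclusion $G\hookrightarrow G\ast F$ it gives $G_r^{(n+1)}\subseteq(G\ast F)_r^{(n+1)}$, and applied to the retraction $\rho\colon G\ast F\twoheadrightarrow G$ killing $F$ it gives $\rho\big((G\ast F)_r^{(n+1)}\big)\subseteq G_r^{(n+1)}$, whence every $g\in G\cap(G\ast F)_r^{(n+1)}$ satisfies $g=\rho(g)\in G_r^{(n+1)}$. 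Consequently the preimage of $X$ in $W_{\Gamma_n}$ is a disjoint union of copies of the intrinsic $n$-th order cover $X_{\Gamma_n(G)}$, and (choosing the splitting $\xi$ compatibly, as in Section~6) one gets an inclusion of skew fields $\mathbb{K}_n(G)\hookrightarrow\mathbb{K}_n(G\ast F)$ and a degree-preserving ring map $\mathbb{K}_n(G)[t^{\pm1}]\to\mathbb{K}_n(G\ast F)[t^{\pm1}]$. Since localisations of Ore domains and skew-field extensions are flat, one obtains
$$\mathrm{rk}_{\mathcal{K}_n}H_1\big(X;\mathcal{K}_n(G\ast F)\big)=r_n(G),\qquad \mathrm{rk}_{\mathbb{K}_n}T_{R_n}H_1\big(X;R_n(G\ast F)\big)=\bar\delta_n(G),$$
and likewise for $Y$ and $F$; here (and below) $H_*(X;-)$ is taken with the coefficients restricted from $\pi_1(W)\to\Gamma_n$.

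\emph{Step 2 (Mayer--Vietoris).} Because $w_0$ is a $0$-cell of both $X$ and $Y$, the relative cellular chain complexes of the covers split as complexes of right $\Z\Gamma_n$-modules,
$$C_*\big(W_{\Gamma_n},(w_0)_{\Gamma_n}\big)\ \cong\ C_*\big(\widehat X,(w_0)_{\Gamma_n}\big)\ \oplus\ C_*\big(\widehat Y,(w_0)_{\Gamma_n}\big),$$
where $\widehat X,\widehat Y$ are the preimages of $X,Y$ in $W_{\Gamma_n}$ (the boundary of an $X$-cell stays among $X$-cells). Tensoring with the flat modules $R_n$, respectively $\mathcal{K}_n$, and passing to homology gives
$$H_1(W,w_0;R_n)\cong H_1(X,x_0;R_n)\oplus H_1(Y,y_0;R_n)$$
and the analogue with $\mathcal{K}_n$-coefficients. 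Now apply the proposition of Harvey quoted above --- $T_{R}H_1(\,\cdot\,;R)\cong T_{R}H_1(\,\cdot\,,\mathrm{pt};R)$ and $\mathrm{rk}_{\mathcal{K}}H_1(\,\cdot\,;\mathcal{K})=\mathrm{rk}_{\mathcal{K}}H_1(\,\cdot\,,\mathrm{pt};\mathcal{K})-1$ --- to $W$, $X$ and $Y$ (each admits a nontrivial map to $\Gamma_n$). Taking $\mathbb{K}_n$-ranks of torsion parts and using Step~1 yields
$$\bar\delta_n(G\ast F)=\mathrm{rk}_{\mathbb{K}_n}T_{R_n}H_1(X,x_0;R_n)+\mathrm{rk}_{\mathbb{K}_n}T_{R_n}H_1(Y,y_0;R_n)=\bar\delta_n(G)+\bar\delta_n(F),$$
and taking $\mathcal{K}_n$-ranks yields
$$r_n(G\ast F)=\big(r_n(G)+1\big)+\big(r_n(F)+1\big)-1=r_n(G)+r_n(F)+1.$$

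\emph{Step 3 ($\delta_n=\infty$) and the main difficulty.} From the last formula $r_n(G\ast F)=r_n(G)+r_n(F)+1\ge1>0$, so $H_1(W;\Z\Gamma_n)$ is not a torsion $\Z\Gamma_n$-module; equivalently $H_1(W;R_n)=\mathcal{A}_n(W)$ contains a free $R_n$-summand of positive rank, which has infinite $\mathbb{K}_n$-dimension, so $\delta_n(G\ast F)=\infty$ (this is one of the equivalent conditions recorded at the end of Section~4). The only real obstacle is Step~1: one must be certain that the ``local'' homology of $X$, although computed with the global coefficient rings $\mathcal{K}_n(G\ast F)$ and $R_n(G\ast F)$, genuinely recovers $r_n(G)$ and $\bar\delta_n(G)$ and not some twisted variant. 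This is precisely where the free-product behaviour of the \emph{rational} derived series is used (the retraction argument, which would fail for the ordinary derived series in general), together with the compatibility of splittings and the flatness of the skew-field extension $\mathbb{K}_n(G)\hookrightarrow\mathbb{K}_n(G\ast F)$; once that identification is in hand, Steps~2 and~3 are formal. (A few degenerate cases, e.g. a factor with $\beta_1=1$ and $\mathbb{K}_n$ reducing to $\Q$, are handled by the same argument.)
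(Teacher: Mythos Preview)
Your proof is correct and arrives at the same conclusion, but by a somewhat different and more conceptual route than the paper.

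The paper argues purely via Fox calculus: it writes down a presentation of $G\ast F$, observes that the Jacobian $\bigl(\partial r_i/\partial x_j\bigr)$ is block-diagonal (one block from the relations of $G$, one from $F$), diagonalises each block separately over $\mathbb{K}_n[t^{\pm1}]$ to obtain elementary divisors $p_1(t),\dots,p_u(t)$ and $q_1(t),\dots,q_v(t)$, and reads off $H_1(\U,u_0;\mathbb{K}_n[t^{\pm1}])\cong\mathbb{K}_n[t^{\pm1}]^{\oplus 2}\oplus\bigoplus_i\mathbb{K}_n[t^{\pm1}]/(p_i)\oplus\bigoplus_j\mathbb{K}_n[t^{\pm1}]/(q_j)$. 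A case split ($F=\Z$ versus both factors nontrivial) handles the degenerate situation.

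Your argument is the topological translation of the same computation: the block-diagonal Jacobian is exactly the chain-level splitting $C_*(W_{\Gamma_n},(w_0)_{\Gamma_n})\cong C_*(\widehat X,(w_0)_{\Gamma_n})\oplus C_*(\widehat Y,(w_0)_{\Gamma_n})$ for the wedge $W=X\vee Y$, and Harvey's proposition replaces the explicit passage from $H_1(\,\cdot\,,\mathrm{pt})$ to $H_1(\,\cdot\,)$. What your version buys is twofold. First, it avoids the case distinction entirely. Second, and more importantly, your Step~1 (the retraction argument showing $G\cap(G\ast F)_r^{(n+1)}=G_r^{(n+1)}$, hence $\Gamma_n(G)\hookrightarrow\Gamma_n(G\ast F)$ and $\mathbb{K}_n(G)\hookrightarrow\mathbb{K}_n(G\ast F)$) supplies the justification that the paper tacitly assumes: namely, that the diagonal entries obtained from the $G$-block over the \emph{global} ring $\mathbb{K}_n(G\ast F)[t^{\pm1}]$ really do have total degree $\bar\delta_n(G)$ and free rank $r_n(G)$ computed \emph{intrinsically} for $G$. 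The paper simply writes the diagonalised block as $\mathrm{diag}(p_1(t),\dots,p_u(t),0)$ and identifies the degrees with $\bar\delta_n(G)$ without comment; your retraction/flatness argument is exactly what is needed to close that gap.
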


\begin{proof} By Prop 10.2, we can now assume $G$ and $F$ have no torsion elements.

Case 1:  If $G=\Z$ or $F=\Z$, wlog, we assume $F=\Z$.  By fox calculus, we get the Jacobian matrix:
\begin{tiny}

\[  \left( \begin{array}{cccc}
\chi \left(\frac{\partial r_1}{\partial x_1}\right) &  ... & \chi \left(\frac{\partial r_1}{\partial x_k}\right) &0\\
\vdots &  \ddots & \vdots& \vdots\\
\chi \left(\frac{\partial r_l}{\partial x_1}\right) &  ... & \chi \left(\frac{\partial r_l}{\partial x_k}\right) &0
 \end{array} \right)
 \thicksim
 \left( \begin{array}{ccccc}
p_1(t) & ...& 0 &0& 0\\
\vdots &\ddots&\vdots &\vdots &\vdots\\
0& \cdots &p_{u}(t)&0 & 0
\end{array} \right)
\]\end{tiny}
\begin{align*}
 H_1(\U,u_0;\Kt)&\cong \Kt \oplus\Kt \oplus \frac{\Kt}{p_1(t)}\oplus\cdots \oplus \frac{\Kt}{p_u(t)},\\
  H_1(\U; \mathbb{K}_n[t^{\pm 1}])&\cong \Kt \oplus\frac{\Kt}{p_1(t)}\oplus\cdots \oplus \frac{\Kt}{p_u(t)}.
\end{align*}

So $ \delta_n(G \ast \Z)=\infty$, $\bar{\delta}_n(G\ast \Z)=\bar{\delta}_n(G)$, $r_n(G \ast \Z)=r_n(G)+1$. Notice that since $\Z$ is abelian, $ \delta_n(\Z)=\bar{\delta}_n(\Z)=r_n(\Z)=0.$
\bigskip

Case 2: If $G$ and $F$ both have more than one generators, then we get the Jacobian matrix:

\begin{tiny}

\[  \left( \begin{array}{cccccc}
\chi \left(\frac{\partial r_1^G}{\partial x_1^G}\right) &  ... & \chi \left(\frac{\partial r_1^G}{\partial x_k^G}\right) &0&0&0\\
\vdots &  \ddots & \vdots& \vdots& \vdots& \vdots\\
\chi \left(\frac{\partial r_i^G}{\partial x_1^G}\right) &  ... & \chi \left(\frac{\partial r_i^G}{\partial x_k^G}\right) &0&0&0\\
0&0&0&\chi \left(\frac{\partial r_1^F}{\partial x_1^F}\right) &  ... & \chi \left(\frac{\partial r_1^F}{\partial x_l^F}\right)\\
\vdots &  \vdots & \vdots& \vdots& \ddots& \vdots\\
0&0&0&\chi \left(\frac{\partial r_j^F}{\partial x_1^F}\right) &  ... & \chi \left(\frac{\partial r_j^F}{\partial x_l^F}\right) 
 \end{array} \right)
 \thicksim
 \left( \begin{array}{cccccccc}
p_1(t) & ...& 0 &0& 0&0&0&0\\
\vdots &\ddots&\vdots &\vdots &\vdots&\vdots&\vdots&\vdots\\
0& \cdots &p_{u}(t)&0 & 0& 0& 0& 0\\
0 & 0& 0& 0& q_1(t) & ...& 0 & 0\\
\vdots &\vdots&\vdots &\vdots &\vdots&\ddots&\vdots&\vdots\\
0 & 0& 0& 0&0 & ...& q_v(t)&0 
\end{array} \right)
\]

\end{tiny}

\begin{align*}
 H_1(\U,u_0;\Kt)&\cong \Kt \oplus\Kt \oplus \frac{\Kt}{p_1(t)}\oplus\cdots \oplus \frac{\Kt}{p_u(t)}\oplus \frac{\Kt}{q_1(t)}\oplus\cdots \oplus \frac{\Kt}{q_v(t)},\\
 H_1(\U; \mathbb{K}_n[t^{\pm 1}])&\cong \Kt \oplus\frac{\Kt}{p_1(t)}\oplus \cdots \oplus \frac{\Kt}{p_u(t)}\oplus\frac{\Kt}{q_1(t)}\oplus \cdots \oplus \frac{\Kt}{q_v(t)}
\end{align*}

So $\delta_n(G\ast F)=\infty$ and $\bar{\delta}_n(G\ast F)=\bar{\delta}_n(G)+\bar{\delta}_n(F)$ and $r_n(G\ast F)=r_n(G)+r_n(F)+1$.

\end{proof}

\begin{rmk}\rm
 If $G$ and $F$ are the fundamental groups of some topological spaces with at least one non-torsion element, $G\ast F$ can not be the fundamental group of an affine hypersurface (or curve) complement.

\end{rmk}

\begin{rmk}\rm
For some fixed $n\geq 0$, the following are equivalent:

1) $\delta_n(G)=0$.

2) $G_r^{(n+1)}/[G_r^{(n+1)},G_r^{(n+1)}]\otimes_{\Z\bar{\Gamma}_n} \mathbb{K}_n=0$.

3) $G_r^{(n+1)}/[G_r^{(n+1)},G_r^{(n+1)}]$ is a torsion $\Z \bar{\Gamma}_n$-module.

\end{rmk}

\newpage

\begin{bibdiv}
\begin{biblist}

\bib{Co}{article}{
title={Non-commutative knot theory},
author={Cochran, Tim},
date={2004},
journal={Algebraic Geometry Topology},
volume={4},
pages={347-398},
}

\bib{C}{article}{
title={Knot concordance, Whitney towers and $L^2$-signatures},
author={Cochran, Tim},
author={Teichner, Peter},
author={Orr, Kent},
journal={Annals of Mathematics},
volume={157},
date={2003},
pages={433-519},
}

\bib{DB}{book}{
title={Singularities and Topology of Hypersurfaces},
author={Dimca, Alexandru},
date={1992},
series={University Text},
publisher={SpringerVerlag}
}

\bib{Di2}{book}{
title={Sheaves in Topology},
author={Dimca, Alexandru},
date={2004},
series={University Text},
publisher={SpringerVerlag}
}

\bib{RF}{article}{
title={Regular Functions Transversal at Infinity},
author={Dimca, Alexandru},
author={Libgober, Anatoly},
journal={Tohoku Math. J.},
volume={58},
date={2006},
pages={549-564}
}

\bib{Mu}{article}{
title={Multivariable Alexander invariants of hypersurface complements},
author={Dimca, Alexandru},
author={Maxim, Laurentiu},
journal={Trans. Amer. Math. Soc.},
volume={357},
date={2007},
pages={3505-3528}
}

\bib{FC}{article}{
title={Free differential calculus II. The isomorphism problem of groups},
author={Fox,R.},
journal={Annals of Mathematics},
volume={59(2)},
date={1954},
pages={196-210}
}

\bib{Ha}{article}{
title={Higher-order polynomial invariants of 3-manifolds giving lower bounds for the Thurston norm},
author={Harvey, Shelly},
date={2005},
journal={Topology},
volume={44},
pages={895-945},
}

\bib{H}{article}{
title={Monotonicity of degrees of generalized Alexander polynomials of groups and 3-manifolds.},
author={Harvey, Shelly},
journal={Math. Proc. Cambridge Philos. Soc.},
volume={140},
date={2006},
pages={431-450}

}

\bib{J}{book}{
title={The Theory of Rings},
author={Jacobson, Nathan},
date={1943},
series={AMS Mathematical Surveys and Monographs},
volume={2},
}

\bib{Ex}{article}{
title={Knot Modules I},
author={Levine, J.},
date={1977},
journal={Transactions of the A.M.S.},
volume={299},
pages={1-50}
}

\bib{AP}{article}{
title={Alexander polynomial of plane algebraic curves and cyclic multiple planes},
author={Libgober, Anatoly},
journal={Duke Mathematical Journal},
volume={49(4)},
date={1982},
pages={833-851}
}

\bib{AP2}{article}{
title={Alexander invariants of plane algebraic curves},
author={Libgober, Anatoly},
journal={Singularities,Proc. Symp. Pure Math.},
volume={40(2)},
date={1983},
pages={135-143}
}

\bib{Fun}{article}{
title={Fundamental groups of the complements to plane singular curves},
author={Libgober, Anatoly},
journal={Algebraic geometry, Bowdoin},
date={1985},
pages={29-45}
}

\bib{On}{article}{
title={On the homology of finite abelian covers},
author={Libgober, Anatoly},
journal={Topology and its applications},
volume={43},
date={1992},
pages={157-166}
}

\bib{Cha}{article}{
title={Characteristic varieties of algebraic curves},
author={Libgober, Anatoly},
journal={Applications of Algebraic Geometry to Coding Theory, Physics and Computations},
date={2001},
pages={215-254}
}

\bib{EM}{article}{
title={Eigenvalues for the Monodromy of the Milnor fibers of Arrangements},
author={Libgober, Anatoly},
journal={Trends in Singularities},
date={2002},
pages={141-150}
}

\bib{Li1}{article}{
title={Homotopy Groups of the Complements to Singular hypersurfaces},
author={Libgober, Anatoly},
journal={Bulletin of the AMS},
volume={13(1)},
date={1985}

}

\bib{HG}{article}{
title={Homotopy Groups of the Complements to Singular hypersurfaces II},
author={Libgober, Anatoly},
journal={Annals of Math.},
volume={139},
date={1994},
pages={117-144}
}

\bib{MS}{article}{
title={Intersection homology and Alexander modules of Hypersurface complements},
author={Maxim, Laurentiu},
date={2006},
journal={Comm. Math. Helv.},
volume={81(1)},
pages={123-155}
}

\bib{L2}{article}{
title={$L^2$-betti numbers of hypersurface complements},
author={Maxim, Laurentiu},
date={2013},
journal={Int. Math. Res. Not. },
pages={4665-4678}

}

\bib{HP}{article}{
title={Higher-order Alexander invariants of plane algebraic curves},
author={Maxim, Laurentiu},
author={Leidy, Constance},
date={2006},
journal={IMRN},
volume={2006},
pages={23 pages}
}

\bib{Os}{article}{
title={Obstructions on fundamental groups of plane curve complements},
author={Maxim, Laurentiu},
author={Leidy, Constance},
date={2008},
journal={Contemp. Math},
volume={459},
pages={117-130}
}

\bib{MR6}{article}{
title={Local indicability and commutator subgroups of Artin groups},
author={Mulholland, Jamie},
author={Rolfsen, Dale},
journal={	arXiv:math.GR/0606116 },
}

\bib{SP}{book}{
title={Singular points of Complex Hypersurfaces},
author={Milnor, John},
date={1968},
series={Annuals of Mathematical Studies 61},
volume={50},
publisher={Princeton Univ. Press},
address={Princeton, NJ}
}

\bib{Oka}{article}{
title={A survey on Alexander polynomials of plane curves},
author={Oka, M.},
series={Singularit\'{e}s Franco-Japonaises},
publisher={Soc. Math. France},
address={Paris},
pages={209-232},
date={2005}
}

\bib{RB}{book}{
title={Knots and Links},
author={Rolfsen, Dale},
date={1976},
publisher={AMS Chelsea Pub.}
}

\bib{Su}{book}{
title={Translated tori in the characteristic varieties of complex hyperplane arrangements},
author={Suciu, Alexandru},
journal={Topology Appl.},
volume={118},
date={2002},
pages={1-2,209-223}
}

\end{biblist}
\end{bibdiv}

\end{document}